\newcounter{pcounter}
\newcommand{\R}{\mathcal{R}}
\newcommand{\F}{\mathcal{F}}
\newcommand{\B}{\mathcal{B}}
\newcommand{\HS}{\mathcal{H}}
\newcommand{\K}{\mathcal{K}}
\newcommand{\EE}{\Bbb E}
\newcommand{\PP}{\Bbb P}
\newcommand{\ZZ}{\Bbb Z}
\newcommand{\NN}{\Bbb N}
\newcommand{\QQ}{\Bbb Q}
\newcommand{\CC}{\Bbb C}
\newcommand{\ip}[1]{\langle #1 \rangle}
\newcommand{\widetidle}{\widetilde}
\newcommand{\varespilon}{\varepsilon}
\newcommand{\actson}{\curvearrowright}
\newtheorem{theorem}{Theorem}
\newtheorem{definition}[theorem]{Definition}
\newtheorem{proposition}[theorem]{Proposition}
\newtheorem{cor}[theorem]{Corollary}
\newtheorem{lemma}[theorem]{Lemma}
\newcommand{\FF}{\Bbb F}
\DeclareMathOperator{\Cay}{Cay}
\DeclareMathOperator{\vol}{vol}
\DeclareMathOperator{\Span}{Span}
\DeclareMathOperator{\im}{im}
\DeclareMathOperator{\id}{Id}
\DeclareMathOperator{\tr}{tr}
\DeclareMathOperator{\supp}{supp}
\DeclareMathOperator{\Proj}{Proj}
\DeclareMathOperator{\Hom}{Hom}
\DeclareMathOperator{\Tr}{Tr}
\DeclareMathOperator{\opdim}{opdim}
\DeclareMathOperator{\wk}{wk}
\DeclareMathOperator{\Meas}{Meas}
\DeclareMathOperator{\dom}{dom}
\DeclareMathOperator{\ran}{ran}
\DeclareMathOperator{\graph}{graph}
\DeclareMathOperator{\Alg}{Alg}
\DeclareMathOperator{\Ball}{Ball}
\DeclareMathOperator{\dist}{dist}
\numberwithin{theorem}{section}
\begin{document}
\title[$l^{p}$-Dimension For Equivalence Relations]{An $l^{p}$-Version of Von Neumann Dimension for Representations of Equivalence Relations}      
\author{Ben Hayes}
\address{UCLA Math Sciences Building\\
         Los Angeles,CA 90095-1555} 
\email{brh6@ucla.edu}       
\date{\today}          
\maketitle

\begin{abstract} Following the methods of \cite{Me}, we introduce an extended version of von Neumann dimension for representations of a discrete, measure-preserving, sofic equivalence relation. Similar to \cite{Me}, this dimension is decreasing under equivariant maps with dense image, and in particular is an isomorphism invariant. We compute dimensions of $L^{p}(\R,\overline{\mu})^{\oplus n}$ for $1\leq p\leq 2.$ We also define an analogue of the first $l^{2}$-Betti number for $l^{p}$-cohomology of equivalence relations, provided the equivalence relations satisfies a certain ``finite presentation" assumption. This analogue of $l^{2}$-Betti numbers may shed some light on the conjecture that cost (as defined by Levitt in \cite{L}) is one more than $l^{2}$-Betti number (as defined by Gaboriau in \cite{Gab2}) of equivalence relations.
\end{abstract}
\tableofcontents

\section{Introduction}

First let us introduce some preliminary definitions.

\begin{definition} \emph{ A} discrete measure preserving equivalence relation is a triple \emph{ $(\R,X,\mu)$ where $(X,\mu)$ is a standard probability space. And $\R\subseteq X\times X,$ is a  subset satisfying the following properties:}\end{definition}

\begin{list}{Property \arabic{pcounter}:~}{\usecounter{pcounter}}
\item for almost ever $x\in X,$ we have $(x,x)\in \R:$
\item for almost every $x\in X,$ and for every $y,z\in X$ such that $(x,y),(y,z)\in \R$ we have $(x,z)\in \R,$
\item for almost every $x\in X,$ and for every $y\in X$ such that $(x,y)\in \R$ we have $(y,x)\in \R,$
\item for almost every $x\in X,$ $\{y\in X:(x,y)\in \R\}$ is countable,
\item for every $B\subseteq \R$ so that  $x\mapsto |\{y:(x,y)\in B\}|$ is measurable we have $y\mapsto |\{x:(x,y)\in B\}|$ is measurable, we shall call such sets \emph{measurable} subsets of $\R,$
\item for every $B\subseteq \R$ measurable we have
\[\int_{X}|\{y:(x,y)\in B\}|,d\mu(x)=\int_{X}|\{y:(y,x)\in B\}|\,d\mu(x),\]

\end{list}

	We define the measure $\overline{\mu}$ on $\R$ by
\[\overline{\mu}(B)=\int_{X}|\{y:(x,y)\in B\}|\,d\mu(x).\]

\begin{definition} \emph{ Let $(\R,X,\mu)$ be a discrete measure preserving equivalence relation. A} partial $\R$-morphism \emph{ is a bimeasurable bijection $\phi\colon A\to B$ where $A,B$ are measurable subsets of $X,$ and such that $(\phi(x),x)\in \R$ for almost every $x.$ We will denote $\phi^{-1}$ to be the partial morphism $\psi\colon B\to A$ which is the inverse of $\phi.$ It follows from our definitions that $\phi$ is necessarily measure-preserving.  We will set $A=\dom(\phi),B=\ran(\phi).$ If $C\subseteq X$ is measurable, we let $\id_{C}\colon C\to C$ be the partial morphism which is the identity on $C.$  We will let $[[\R]]$ denote the set of all partial $\R$-morphisms, we let $[\R]=\{\psi\in [[\R]]:\mu(\dom(\phi))=1\},$ we will identify elements in $[[\R]]$ when the set on which they differ is null. If $A\subseteq X$ is measurable and $\mu(A)>0$, we let $\R_{A}$ be the equivalence relation over $(A,\mu_{A})$ given by $\R_{A}=\R\cap A\times A,$ and $\mu_{A}=\frac{\mu\big|_{A}}{\mu(A)}$}\end{definition}

\begin{definition}\emph{ Let $\R$ be a discrete measure preserving equivalence relation on $(X,\mu).$ The} von Neumann algebra of $\R$ denoted $L(\R),$ \emph{is defined to be von Neumann algebra inside $B(L^{2}(\R,\overline{\mu}))$ generated by the operators $v_{\phi},\phi\in [[\R]]$ defined by}\end{definition}
\[v_{\phi}f(x,y)=\chi_{\dom(\phi^{-1})}(x)f(\phi^{-1}x,y).\]
That is, $L(\R)$ is defined to be the weak operator topology closure of 
\[\left\{\sum_{\phi \in [[\R]]}c_{\phi}v_{\phi}:c_{\phi}=0\mbox{ for all but finitely many $\phi$}\right\}.\]
We let $\tau\colon L(\R)\to \CC,$ be defined by
\[\tau(x)=\ip{x\chi_{D},\chi_{D}},\]
where $D=\{(x,x):x\in \R\}.$ It is known that 
\[\tau(xy)=\tau(yx),\]
\[\tau(x^{*}x)\geq 0,\mbox{with equality if and only if $x=0.$}\]

	For $x\in L(\R),$ we let $\|x\|_{2}=\tau(x^{*}x)^{1/2}.$

	If $M$ is a von Neumann algebra with a faithful normal tracial state $\tau,$ we shall use $|x|=(x^{*}x)^{1/2}$ and $\|x\|_{p}=\tau(|x|^{p})$ for $1\leq p<\infty,$ and $\|x\|_{\infty}$ for the operator norm of $x.$ In particular this applies to $M=M_{n}(\CC)$ and the trace given by $\tr=\frac{1}{n}\Tr,$ where $\Tr$ is the usual trace.

\begin{definition} \emph{ An} action \emph{of $\R$ on a Banach space V consists of map $[[\R]]\times V\to V,$ such that when we denote $\phi v=(\phi,v)$ the following axioms are satisfied}\end{definition}
\begin{list}{Axiom \arabic{pcounter}:~}{\usecounter{pcounter}}
\item $\phi(\psi v)=(\phi \psi)v$
\item $\phi^{-1}\phi v=\id_{\dom(\phi)}v,$
\item $\phi \cdot=\psi \cdot$ if $\phi=\psi$ almost everywhere,
\item $\id_{X}v=v.$
\item $\phi_{n}v\to \phi v,$ \mbox{ if $\phi_{n}\in [[\R]],$ and $\|\phi-\phi_{n}\|_{2}\to 0.$}
\end{list}
We also call $V$ a representation of $\R.$ We say that the action is \emph{uniformly bounded} if there is a constant $C>0$ so that
\[\|\phi \cdot v\|\leq C\|v\|\]
We say that two representations $V,W$  are \emph{isomorphic} if there is a bounded linear bijection $T\colon V\to W$ such that $T(\phi v)=\phi T(v)$ for all $\phi\in [[\R]],v\in V.$ 

	We say that two representations are \emph{weakly isomorphic} if for every $\varepsilon>0,$ there is an $\R$-invariant set $A\subseteq X$ of measure at least $1-\varepsilon,$ and a bounded linear bijection $T\colon \id_{A}(V)\to \id_{A}(W),$ such that for every $\phi\in [[\R]]$ with $\dom(\phi)\subseteq A,$ and for every $v\in \id_{A}V,$
\[T(\phi v)=\phi T(v).\]

	Here is a natural example arising from groups. Let $\Gamma$ be a countable discrete group and $\Gamma \actson (X,\mu)$ a free action. Let $\R$ be the corresponding equivalence relation. Consider the Zimmer cocycle $\theta\colon \R\to \Gamma \times X$ given by $\theta(x,y)\cdot y=x.$ Let $\Gamma$ have a uniformly bounded representation $\pi\colon \Gamma\to B(V),$ with $V$ a Banach space. Define a representation of $\R$ on $L^{p}(X,\mu,V)$  for $1\leq p<\infty,$ by
\[(\phi \cdot f)(x)=\chi_{\ran(\phi)}(x)\pi(\theta(x,\phi^{-1}x))f(\phi^{-1}x).\]

	For example, $\R$ acts naturally on $L^{2}(\R,\overline{\mu})$ by viewing $\R$ inside $L(\R).$ Let $\mathcal{H}\subseteq l^{2}(\NN, L^{2}(\R,\overline{\mu}))$ be $\R$-invariant, and $P_{\mathcal{H}}$ the projection onto $\mathcal{H}.$ Define
\[\dim_{L(\R)}(\mathcal{H})=\sum_{n=1}^{\infty}\ip{P_{\mathcal{H}}(\chi_{\Delta}\otimes e_{n}),\chi_{\Delta}\otimes e_{n}},\]
where $\chi_{\Delta}\otimes e_{n}(m)=\delta_{n=m}\chi_{\Delta}.$ 

	Then $\dim_{L(\R)}(\mathcal{H})$ obeys the usual properties of dimension:

\begin{list}{Property \arabic{pcounter}:~}{\usecounter{pcounter}}
\item$\dim_{L(\R)}(\mathcal{H})=\dim_{L(\R)}(\mathcal{K}),$ if there is a $\R$-equivariant bounded linear bijection from $H$ to $K$\\
\item $\dim_{L(\Gamma)}(\HS\oplus \K)=\dim_{L(\Gamma)}(\HS)+\dim_{L(\Gamma)}(\K).$\\
\item $\dim_{L(\Gamma)}(\HS)=0$ if and only if $\HS=0,$\\
\item $\dim_{L(\Gamma)}\left(\bigcap_{n=1}^{\infty}\HS_{n}\right)=\lim_{n\to \infty}\dim_{L(\Gamma)}(\HS_{n}),\mbox{ if $\HS_{n+1}\subseteq \HS_{n}$}$,\\
\item $\dim_{L(\Gamma)}\overline{\left(\bigcup_{n=1}^{\infty}\HS_{n}\right)}=\lim_{n\to \infty}\dim_{L(\Gamma)}(\HS_{n})\mbox{ if $\HS_{n}\subseteq \HS_{n+1}.$}$
\end{list}

	We extend this notion of dimension for a certain class of equivalence relations, called \emph{sofic} equivalence relations. The ideas are the same as in \cite{Me}. The basic idea is that ``dimension is entropy." For example, if $\R$ comes from a free, measure-preserving action of a countable discrete group $\Gamma \actson (X,\mu),$ then $L^{2}(\R,\mu)$ can be viewed as the direct integral of $l^{2}(\Gamma)$ over $(X,\mu).$ Further this identification describes the action of $\R$ as coming from the left regular representation of $\Gamma$ on $l^{2}(\Gamma).$ Tautologically, $l^{2}(\Gamma)$ is a subset of $\CC^{\Gamma},$ and this left-translation action can be seen as a restricted Bernoulli action. So one should be able to follow the theory of entropy for Bernoulli actions on compact metric spaces or standard probability spaces. This idea was used by Voiculescu in \cite{Voi} to express von Neumann dimension by a formula analogous to topological entropy in the case of amenable groups. Further, Gornay in \cite{Gor} used the idea of von Neumann dimension as a type of entropy to generalize von Neumann dimension to the case of an amenable group acting by left-translation on $l^{p}(\Gamma)^{\oplus n}$ (it is in fact defined for $\Gamma$-invariant subspaces of $l^{p}(\Gamma)^{\oplus n}.$

	We will thus define a upper and lower notions of $l^{p}$-dimension for sofic equivalence relations, denoted $\dim_{\Sigma,l^{p}}(V,\R),\underline{\dim}_{\Sigma,l^{p}}(V,\R)$  (here $\Sigma$ is a sofic approximation, a notion to be defined later) satisfying the following properties.

\begin{list}{Property \arabic{pcounter}: ~ }{\usecounter{pcounter}}
\item $\dim_{\Sigma,l^{p}}(W,\R)\leq \dim_{\Sigma,l^{p}}(V,\R)$ if there is a $\R$-equivariant bounded map $W\to V$ with dense image and the same for $\underline{\dim},$
\item $\mu(A)\dim_{\Sigma,l^{p}}(\id_{A}V,\R_{A})=\dim_{\Sigma,l^{p}}(V,\R)$and the same for $\underline{\dim}$
\item $\dim_{\Sigma,l^{p}}(V,\R)\leq \dim_{\Sigma,l^{p}}(W,\R)+\dim_{\Sigma,l^{p}}(V/W,\R),$ if $W\subseteq V$ is a closed $\R$-invariant subspace.
\item $\underline{\dim}_{\Sigma,l^{p}}(V,\R)\leq \underline{\dim}_{\Sigma,l^{p}}(W,\R)+\dim_{\Sigma,l^{p}}(V/W,\R),$ if $W\subseteq V$ is a closed $\R$-invariant subspace.
\item $\underline{\dim}_{\Sigma,l^{p}}(V,\R)\leq \dim_{\Sigma,l^{p}}(W,\R)+\underline{\dim}_{\Sigma,l^{p}}(V/W,\R),$ if $W\subseteq V$ is a closed $\R$-invariant subspace.
\item $\underline{\dim}_{\Sigma,l^{2}}(H,\R)=\dim_{\Sigma,l^{2}}(H,\R)= \dim_{L(\Gamma)}H$  if $H\subseteq l^{2}(\NN,L^{2}(\R,\overline{\mu}))$ is a closed $\R$-invariant subspace.
\item $\underline{\dim}_{\Sigma,l^{p}}(L^{p}(\R,\overline{\mu})^{\oplus n},\R)=\dim_{\Sigma,l^{p}}(L^{p}(\R,\overline{\mu})^{\oplus n},\R)=n$ for $1\leq p\leq 2.$\\

\end{list} 

	Also, in Section \ref{S:cohom}, if $\R$ is a sofic equivalence relation with sofic approximation, which satisfies a certain ``finite presentation" assumption, we define a number $c^{(p)}_{1,\Sigma}(\R),$ which is an $l^{p}$-analogue of $\beta^{(2)}_{1}(\R)+1.$ This number has the property that  $c^{(p)}_{1,\Sigma}(\R)\leq c(\R),$ where $C(\R)$ is the cost of $\R.$ Further, $\mu(A)(c^{(p)}_{1,\Sigma_{A}}(\R_{A})-1)\geq c^{(p)}_{1,\Sigma}(\R)-1.$ This is if we could find an equivalence relation with vanishing $l^{2}$-cohomology, but so that $c^{(p)}_{1,\Sigma}(\R)>1,$ then we could disprove the conjecture that $\beta^{(2)}_{1}(\R)=c(\R)+1.$ If in addition we could prove that $c^{(p)}_{1,\Sigma}(\R)>1$ for all $\Sigma,$ then $\R$ would necessarily have trivial fundamental group.

\section{Definition of The Invariants}

	We start with the definition of a sofic approximation. For this, we let $[[\R_{n}]]$ be the equivalence relation on $\{1,\cdots,n\}$ defined by declaring all points to be equivalent. Then $[[\R_{n}]]$ acts on $\{1,\cdots,n\}$ as before so we may view $[[\R_{n}]]\subseteq B(l^{2}(n))\cong M_{n}(\CC).$

\begin{definition}\emph{ A} sofic approximation \emph{of a discrete measure preserving equivalence relation $(\R,X,\mu)$ is a sequence of maps $\sigma_{i}\colon *-\Alg([[R]],\Proj(L^{\infty}(X,\mu)))\to M_{d_{i}}(\CC)$ such that }
\emph{such that $\sigma_{i}([[R]])\subseteq [[\R_{d_{i}}]],\sigma_{i}(\Proj(L^{\infty}(X,\mu))\subseteq \Proj(l^{\infty}(d_{i})),$ and}
\[\|\sigma_{i}(xy)-\sigma_{i}(x)\sigma_{i}(y)\|_{2}\to 0,\mbox{ \emph{for all $x,y\in *-\Alg(\Proj(L^{\infty}(X,\mu)),[[\R]])$}},\]
\[\|\sigma_{i}(x^{*})-\sigma_{i}(x)^{*}\|_{2}\to 0,\mbox{ for all $x$}\]
\[\tr\circ \sigma_{i}(x)\to \tau(x)\mbox{ \emph{for all $x\in *-\Alg(\Proj(L^{\infty}(X,\mu)),[[\R]])$}},\]
\[\sup_{i}\|\sigma_{i}(x)\|_{\infty}<\infty\mbox{ \emph{for all} $x$}.\]
\[\|\sigma_{i}(x+y)-\sigma_{i}(x)-\sigma_{i}(y)\|_{2}\to 0\emph{ for all $x,y\in A$}\]
\[\|\sigma_{i}(\lambda x)-\lambda\sigma_{i}(x)\|_{2}\to 0\emph{ for all $x\in A,\lambda\in \CC$}\]

\end{definition} 

	See \cite{EL} for more about sofic equivalence relations.

	One can actually relax the condition that $\sigma_{i}$ is everywhere defined by using the following proposition.

\begin{proposition}\label{P:extend} Let $\Phi$ be a graphing of $\R,$ and let $\mathcal{P}\subseteq \Proj(L^{\infty}(X,\mu)$ be such that $W^{*}(\{v_{\phi}pv_{\phi}^{*}:p\in P,\phi\in \Phi\})=L^{\infty}(X,\mu).$ Let $A=*-\Alg(P,\Phi).$ Suppose that there exists $\phi_{i}\colon A\to M_{d_{i}}(\CC)$ such that 
\[\phi_{i}(P)\subseteq D_{d_{i}}\mbox{ for all $\phi\in \Phi\cup \Phi^{*}\cup \{\id\},p\in P$}\]
\[\phi_{i}(\Phi)\subseteq [[R_{d_{i}}]].\]
\[\|\phi_{i}(xy)-\phi_{i}(x)\phi_{i}(y)\|_{2}\to 0,\mbox{ for all $x,y\in A$}\]
\[\|\phi_{i}(x^{*})-\phi_{i}(x)^{*}\|_{2}\to 0,\mbox{ for all $x\in A$}\]
\[\tr\circ \phi_{i}(x)\to \tau(x)\mbox{ for all $x\in A$}\]
\[\sup_{i}\|\phi_{i}(x)\|_{\infty}<\infty\mbox{ for all $x\in A$.}\]
\[\|\phi_{i}(x+y)-\sigma_{i}(x)-\phi_{i}(y)\|_{2}\to 0,\mbox{for all $x,y\in A.$}\]
\[\|\phi_{i}(\lambda x)-\lambda \phi_{i}(x)\|_{2}\to 0,\mbox{ for all $x\in A,\lambda\in \CC.$}\]

	Then there exists a sofic approximation $\sigma_{i}\colon *-\Alg(\Proj(L^{\infty}(X,\mu)),[[\R]])\to M_{d_{i}}(\CC)$ such that 
\[\|\rho_{i}(x)-\phi_{i}(x)\|_{2}\to 0\]
for all $x\in A.$ 
\end{proposition}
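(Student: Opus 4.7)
The plan is to extend $\phi_i$ from $A$ to $B := *-\Alg(\Proj(L^{\infty}(X,\mu)),[[\R]])$ in two stages: first defining $\sigma_i$ on arbitrary projections in $L^{\infty}(X,\mu)$ and on arbitrary partial morphisms in $[[\R]]$, then extending $\CC$-linearly. The key observation is that the hypotheses on $\Phi$ and $P$ force $A$ to be $\|\cdot\|_{2}$-dense in $B$, which both forces the definition of $\sigma_i$ and provides the estimates needed for the sofic axioms.

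First I would verify the density claim. Since $\Phi$ is a graphing, any $\psi\in [[\R]]$ decomposes (up to a null set) as a countable disjoint union of restrictions $q_{j}w_{j}$, where $w_{j}$ is a word in $\Phi\cup\Phi^{-1}$ and $q_{j}\in\Proj(L^{\infty}(X,\mu))$; truncating at a finite level gives a $\|\cdot\|_{2}$-approximation. The assumption $W^{*}(\{v_{\phi}pv_{\phi}^{*}:p\in P,\phi\in\Phi\})=L^{\infty}(X,\mu)$ plus SOT-density of the generated *-algebra in its WOT-closure then gives $\|\cdot\|_{2}$-approximation of every $q\in\Proj(L^{\infty}(X,\mu))$ by spectral projections of self-adjoint elements of the *-algebra generated by $\{v_{w}pv_{w}^{*}:p\in P,\ w\text{ a word in }\Phi\cup\Phi^{-1}\}\subseteq A$. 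In both cases one can arrange the approximants to have uniformly bounded $\|\cdot\|_{\infty}$.

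Next, for each $q\in\Proj(L^{\infty}(X,\mu))$ and each $i$, pick a self-adjoint $a_{i}^{q}\in A$ with $\|a_{i}^{q}\|_{\infty}$ uniformly bounded and $\|a_{i}^{q}-q\|_{2}\to 0$. The approximate multiplicativity and *-preservation of $\phi_{i}$ on $A$ make $\phi_{i}(a_{i}^{q})$ an approximate self-adjoint idempotent, and approximate multiplicativity against the $P$'s and $\id$'s makes it approximately diagonal; thresholding its diagonal at $1/2$ produces $\sigma_{i}(q)\in D_{d_{i}}$ with $\|\sigma_{i}(q)-\phi_{i}(a_{i}^{q})\|_{2}\to 0$. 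Similarly, for each $\psi\in[[\R]]$ approximate $\psi$ in $\|\cdot\|_{2}$ by $a_{i}^{\psi}=\sum_{j}p_{j}^{i}w_{j}^{i}\in A$ (finite sum, uniform $\|\cdot\|_{\infty}$-bound), and obtain $\sigma_{i}(\psi)\in[[\R_{d_{i}}]]$ by a polar-rounding of $\phi_{i}(a_{i}^{\psi})$: set $\sigma_{i}(\psi)_{jk}=1$ on those entries whose absolute value exceeds $1/2$ and which are maximal in their row and column, and $0$ elsewhere. Approximate multiplicativity together with the fact that $\phi_{i}(w_{j}^{i})\in[[\R_{d_{i}}]]$ and $\phi_{i}(p_{j}^{i})\in D_{d_{i}}$ guarantee that $\phi_{i}(a_{i}^{\psi})$ is close to a genuine partial permutation in $\|\cdot\|_{2}$, so $\|\sigma_{i}(\psi)-\phi_{i}(a_{i}^{\psi})\|_{2}\to 0$. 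Extend $\sigma_{i}$ $\CC$-linearly. The desired estimate $\|\sigma_{i}(x)-\phi_{i}(x)\|_{2}\to 0$ for $x\in A$ follows because for such $x$ one may take $a_{i}^{\cdot}$ to be $x$ itself in the expansion.

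The main obstacle is making all the limit statements hold simultaneously for the uncountable families of projections in $L^{\infty}(X,\mu)$ and partial morphisms in $[[\R]]$, while preserving the full list of sofic axioms (approximate multiplicativity, *-preservation, $\CC$-linearity, trace convergence, uniform $\|\cdot\|_{\infty}$-bound). The standard remedy is a diagonal argument: exhaust $B$ by an increasing sequence of countable subsets $F_{n}\subseteq B$ (closed under the finitely many algebraic operations involved in each axiom), pick approximants $a_{i}^{\cdot}$ on $F_{n}$ so that all the relevant $\|\cdot\|_{2}$-estimates are within a tolerance $\varepsilon_{i}\to 0$ for $i$ large, then use the uniform $\|\cdot\|_{\infty}$-bounds of $\phi_{i}$ on $A$ together with the $C^{*}$-inequality $\|xy\|_{2}\leq\|x\|_{\infty}\|y\|_{2}$ and $\|\cdot\|_{2}$-density to propagate the axioms from $\bigcup_{n}F_{n}$ to all of $B$.
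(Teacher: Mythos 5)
Your proposal is correct and follows essentially the same route as the paper's proof: approximate each projection and each partial morphism by elements of $A$ in $\|\cdot\|_{2}$ with uniform operator-norm control (the paper does this via the Kaplansky density theorem), perturb the images so that projections land in $\Proj(l^{\infty}(d_{i}))$ and partial morphisms land in $[[\R_{d_{i}}]]$ (the paper's ultraproduct step, your explicit thresholding/rounding), and then run a diagonal argument over $i$ and the approximation index to get a single sequence $\sigma_{i}$ satisfying all the sofic axioms. The only differences are presentational: the paper first modifies $\phi_{i}$ on $A$ itself and then extends, while you define $\sigma_{i}$ directly on the generators and extend, which amounts to the same thing.
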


\begin{proof}  It is easy to see that for all $p\in \Proj(L^{\infty}(X,\mu))\cap A,\phi\in [[\R]]\cap A$ there are $q_{p,i}\in \Proj(l^{\infty}(d_{i})),v_{\phi,i}\in [[\R]]\cap A$ such that 
\[\sigma_{i}(p)=q_{p,i}\]
\[\sigma_{i}(\phi)=v_{\phi,i},\]
\[\|q_{p,i}-\sigma_{i}(p)\|_{2}\to 0,\]
\[\|v_{\phi,i}-\sigma_{i}(\phi)\|_{2}\to 0,\]
and such that $q_{\id,i}=\id,v_{\id,i}=\id.$ 

	Define
\[\widetilde{\phi_{i}}(x)=\phi_{i}(x)\mbox{ if $x\in A\setminus (\Proj(L^{\infty}(X,\mu))\cap A)\cup ([[\R]]\cap A)$},\]
\[\widetidle{\phi_{i}}(p)=q_{p,i},\mbox{ for $p\in \Proj(L^{\infty}(X,\mu))\cap A$}\]
\[\widetilde{\phi_{i}}(\phi)=v_{\phi,i}.\]

	It is easy to see that $\widetidle{\phi_{i}}$ satisfies the same hypotheses as $\phi_{i}.$ Hence, replacing $\widetidle{\phi_{i}}$ with $\phi_{i}$ we may as well assume that $\phi_{i}(\Proj(L^{\infty}(X,\mu)\cap A)\subseteq \Proj(l^{\infty}(d_{i})),$ and 
\[\phi_{i}([[\R]]\cap A)\subseteq [[\R_{d_{i}}]].\]

	For a given $x\in L(\R)$ invoke the Kaplansky Density Theorem to find $x_{n}\in A$ such that 
\[\|x-x_{n}\|_{2}<2^{-n},\|x_{n}\|_{\infty}\leq \|x\|_{\infty}.\]

	If $x\in \Proj(L^{\infty}(X,\mu))$ we may force $x_{n}\in \Proj(L^{\infty}(X,\mu))\cap A$ if $x\in [[\R]],$ we may force $x_{n}\in [[\R]]\cap A.$ An ultraproduct argument proves that for each $n,i$ there are $y_{n,i}\in M_{d_{i}}(\CC)$ so that 
\[\|y_{n,i}\|_{\infty}\leq \|x_{n}\|_{\infty}\]
\[\|y_{n,i}-\phi_{i}(x_{n,i})\|_{2}\to 0,\mbox{ as $i\to \infty$}.\]

	Again we may force $y_{n,i}\in \Proj(l^{\infty}(d_{i}))$ if $x_{n,i}\in L^{\infty}(X,\mu),$ whereas if $y_{n,i}\in [[\R]]$ we may force $y_{n,i}\in [[\R_{d_{i}}]].$ Choose $i_{1}\leq i_{2}\leq i_{3}\leq \cdots$ such that \
\[\|\phi_{i}(x_{j})-y_{j,i}\|_{2}<2^{-n}\mbox{ if $i\leq i_{n},1\leq j\leq n$}\]
\[\|y_{j,i}-y_{j+1,i}\|_{2}<2\cdot 2^{-j}\mbox{ if $i\leq i_{n},1\leq j\leq n-1$}\]
and define
\[\sigma_{i}(x)=y_{n,i} \mbox{ if $i_{n}\leq i<i_{n+1}.$}\]

	Note that if $m\geq n,$ and $i_{m}\leq i<i_{m+1},$ then
\begin{align*}
\|\sigma_{i}(x)-\phi_{i}(x_{n})\|_{2}&=\|y_{m,i}-\phi_{i}(x_{n})\|\\
&\leq 2^{-m}+\sum_{j=n}^{m-1}\|y_{j,i}-\phi_{i}(x_{j})\|+\|y_{j,i}-y_{j+1,i}\|_{2}\\
&\leq 2^{-m}+2\cdot 2^{-n}
\end{align*}

	And since $\sup_{i}\|\sigma_{i}(x)\|_{\infty}<\infty,$ it then becomes a simple exercise to verify that $\sigma_{i}$ has the desired properties.

\end{proof}

	We now proceed to state the definition of our extended von Neumann dimension, again the ideas are parallel to \cite{Me}.

\begin{definition} \emph{ Let $V$ be a separable Banach space with a uniformly bounded action of $\R,$ and let $q\colon W\to V$ be a bounded linear surjective map where $Y$ has the bounded approximation property. Let $\Phi\subseteq L(\R).$ For $F\subseteq \Phi$ finite, we define $\mathcal{W}_{k}(F)=\{\phi_{1}\cdots \phi_{j}:1\leq j\leq k,\phi_{j}\in F\}.$  A} $q$-dynamical filtration\emph{ consists of a pair $\F=((b_{\phi,j})_{(j,\phi)\in \NN\times \mathcal{W}(\Phi)},(W_{F,k})_{F\subseteq \Phi\mbox{ finite}}$ where}\end{definition}
\[b_{\phi,j}\in W,\]
\[\sup_{(j,\phi)}\|b_{\phi,j}\|<\infty,\]
\[q(b_{\id,j})\mbox{ is dynamically generating},\]
\[q(b_{\phi,j})=pv_{\phi}q(b_{j,\id}),\]
\[W_{F,k}\subseteq W_{F',k'}\mbox{ if $F\subseteq F',k\leq k',$}\]
\[W_{F,k}=\Span\{b_{j,\phi}:1\leq j\leq k,\phi\in\mathcal{W}_{k}(F)\}+\ker(q)\cap W_{F,k},\]
\[\ker(q)=\overline{\bigcup_{F,k}W_{F,k}\cap \ker(q)}.\]

\begin{definition} \emph{For $C>0,$ a Banach space $W$ is said to have the} $C$-bounded approximation property \emph{if there is a net $\theta_{\alpha}\colon W\to W$ of finite rank maps such that $\|\theta_{\alpha}\|\leq C,$ and $\theta_{\alpha}\to \id$ in the strong operator topology. We say that $W$ has the} bounded approximation property \emph{ if it has the $C$-bounded approximation property for some $C>0.$}\end{definition}

\begin{definition} \emph{A quotient dimension tuple is a tuple $((X,\mu),\R,\Phi,V,W,q,\Sigma)$ where $(X,\mu)$ is a standard probability space, $\R$ is a discrete measure-preserving equivalence relation on $(X,\mu),$ $\Phi\subseteq L(\R)$ is of the form $\Phi=\Phi_{0}\cup \mathcal{P},$ where $\Phi_{0}\subseteq [[\R]]$ is a graphing, and $1\in \mathcal{P}\subseteq\Proj(L^{\infty}(X,\mu))$ has $W^{*}(\{v_{\phi}pv_{\phi}^{*}:\phi \in \Phi_{0},p\in \mathcal{P})=L^{\infty}(X,\mu),$  $V$is a uniformly bounded representation of $\R,$ $W$ is a separable Banach space with the bounded approximation property, $q\colon W\to V$ is a bounded linear surjective map, and $\Sigma=(\sigma_{i}\colon *-\Alg([[\R]],\Proj(L^{\infty}(X,\mu)))\to M_{d_{i}}(\CC))$ is a sofic approximation.}
\end{definition}

\begin{definition} \emph{Let $((X,\mu),\R,\Phi,V,W,q,\Sigma)$ be a quotient dimension tuple. Let $\F=((b_{j,\phi},W_{F,k}))$ be a $q$-dynamical filtration. For $F\subseteq \Phi$ finite, $m\in \NN,\delta>0$ we let $\Hom_{\R,l^{p}}(\F,F,m,\delta,\sigma_{i})$ consists of all linear maps $T\colon W\to l^{p}(d_{i})$  with $\|T\|\leq 1,$ and such that there is an $A\subseteq \{1,\cdots,d_{i}\}$ with $|A|\geq (1-\delta)d_{i}$ so that for all $1\leq j\leq m,$ for all $\phi_{1},\cdots,\phi_{m}\in F$ we have}
\[\|T(b_{\phi_{1}\cdots\phi_{k},j})-\sigma_{i}(\phi)\cdots \sigma_{i}(\phi_{k})T(b_{\id,j})\|_{l^{p}(A)}<\delta\]
\[\|T\big|_{\ker(q)\cap W_{F,m}}\|\leq \delta.\]
\end{definition}

	The intuition for the preceding definition is as follows. If $T\colon V\to l^{p}(d_{i})$ were an honest equivariant map, then by composing with the quotient map we find a map $S\colon W\to l^{p}(d_{i})$ such that 
\[S(b_{\phi_{1}\cdots\phi_{k},j})=\sigma_{i}(\phi)\cdots \sigma_{i}(\phi_{k})S(b_{\id,j}),\]
\[S\big|_{\ker(q)}=0,\]
for all $\phi_{1},\cdots,\phi_{k}\in [[\R]]$ and $j\in \NN.$ So $\Hom_{\R,l^{p}}(\cdots)$ may be thought of as a space of almost equivariant maps. In this case, we must cut down by the set $A,$ in order to pass from one graphing of $\R$ to another.

\begin{definition} \emph{Let $(\R,X,\mu)$ be a discrete measure-preserving equivalence relation with a uniformly bounded representation on a Banach space $V.$ A} dynamically generating sequence \emph{ is a bounded sequence $S=(v_{j})_{j=1}^{\infty}$ in $V$ such that $\overline{\Span\{\phi v_{j}:j\in \NN,\phi \in [[\R]]\}}=V.$ If $\Sigma$ is a sofic approximation of $\R,$ and $\Phi=\Phi_{0}\cup \mathcal{P}\subseteq [[\R]]$ with $\Phi_{0}$ a graphing and $\mathcal{P}$ a set of projections so that $W^{*}(\{v_{\phi}^{*}pv_{\phi}:p\in \mathcal{P}\})=L^{\infty}(X,\mu),$ then the tuple $((X,\mu),\R,\Phi,V,S,\Sigma)$ will be called a} dimension tuple.\end{definition}

\begin{definition} \emph{Let $V$ be a Banach space and $n\in \NN.$ Let $\rho$ be a pseudonorm on $B(V,l^{p}(n)),$ if $A,B\subseteq B(V,l^{p}(n)),$ for $\varepsilon,M>0,$ we say that $A$ is} $(\varepsilon,M)$-contained in $B$\emph{ if for every $T\in A,$ there is an $S\in B,$ with $\|S\|\leq M$ and $C\subseteq\{1,\cdots,n\}$ with $|C|\geq (1-\varepsilon)n,$ so that $\rho(m_{\chi_{C}}(T-S))<\varepsilon.$ Similarly, if $\rho$ is a pseudonorm on $l^{\infty}(\NN,l^{p}(n))$ and $A,B\subseteq l^{\infty}(\NN,l^{p}(n))$ we say that $A$ is $\varepsilon$-contained in $B$ if for every $f\in A$ there is a $g\in B$ and $C\subseteq\{1,\cdots,n\}$ with $|C|\geq (1-\varepsilon)n$ so that $\rho(\chi_{C}(f-g))<\varepsilon.$ We shall use $d_{\varepsilon}(A,\rho),$ (respectively $d_{\varepsilon,M}(A,\rho)$) for the smallest dimension of a linear subspace which $\varepsilon$-contains (respectively $(\varepsilon,M)$-contains) $A.$ }
\end{definition}

	Note the difference between $\varepsilon$-containment as stated here and in \cite{Me}, this difference is why we have difficulty proving any sort of relation between extended von Neumann dimension for groups and for equivalence relations.

\begin{definition} \emph{Let $((X,\mu),\R,\Phi,V,W,q,\Sigma)$ be a quotient dimension tuple, and $\F$ a $q$-dynamical filtration. For a sequence of pseduonorms $\rho=(\rho_{i})$ on $B(W,l^{p}(d_{i}))$ we define}
\[\opdim_{\Sigma,M,l^{p}}(\F,\Phi,F,m,\delta,\varepsilon,\Phi,\rho)=\limsup_{i\to \infty}\frac{1}{d_{i}}d_{\varepsilon,M}(\Hom_{\R,l^{p}}(\F,F,m,\delta,\sigma_{i})),\]
\[\opdim_{\Sigma,l^{p}}(\F,\varepsilon,\Phi,\rho)=\inf_{F\subseteq \Phi\mbox{ \emph{finite}},m\in\NN,\delta>0}\opdim_{\Sigma,M,l^{p}}(\F,F,m,\delta,\varepsilon,\Phi,\rho),\]
\[\opdim_{\Sigma,M,l^{p}}(\F,\Phi,\rho)=\sup_{\varepsilon>0}\opdim_{\Sigma,M,l^{p}}(\F,\Phi,\varepsilon,\rho).\]
\end{definition}
	We also define $\underline{\opdim}_{\Sigma,M,l^{p}}(\F,\Phi,\rho)$ in the same way except using a limit infimum instead of a limit supremum.
	For later use, we note that if $\rho$ is a norm on $l^{\infty}(\NN)$ and $\F$ is as above, we use $\rho_{\F,i}(T)=\rho(j\mapsto \|T(b_{\id,j})\|).$

\begin{definition} \emph{Let $((X,\mu),\R,\Phi,V,W,q,\Sigma)$ be a quotient dimension tuple, and $\F$ a $q$-dynamical filtration. Define $\alpha_{\F}\colon B(V,l^{p}(d_{i}))\to l^{\infty}(\NN,l^{p}(d_{i}))$ by $\alpha_{F}(T)(n)=T(b_{\id,n}).$  We define}
\[f.\dim_{\Sigma,l^m{p}}(\F,F,m,\delta,\varepsilon,\Phi,\rho)=\limsup_{i\to \infty}\frac{1}{d_{i}}d_{\varepsilon}(\alpha_{\F}(\Hom_{\R,l^{p}}(\F,F,m,\delta,\sigma_{i})),\rho_{p,d_{i}}),\]
\[f.\dim_{\Sigma,l^{p}}(\F,\varepsilon,\Phi,\rho)=\inf_{F\subseteq \Phi\mbox{\emph{ finite}},m\in\NN,\delta>0}\opdim_{\Sigma,M,l^{p}}(\F,F,m,\delta,\varepsilon,\rho),\]
\[f.\dim_{\Sigma,l^{p}}(\F,\Phi,\rho)=\sup_{\varepsilon>0}f.\dim_{\Sigma,l^{p}}(\F,\varepsilon,\Phi,\rho).\]
\end{definition}

\begin{definition} \emph{Let $((X,\mu),\R,\Phi,V,S,\Sigma)$ be a dimension tuple. Let $\rho$ be a norm on $l^{\infty}(\NN).$  Let $\rho_{p,d_{i}}$ be the norm on $l^{\infty}(\NN,l^{p}(d_{i}))$ given by $\rho_{p,d_{i}}(f)=\rho(\|f\|_{p}).$ Let $S=(v_{j})_{j=1}^{\infty},$ set $V_{F,m}=\Span\{\phi v_{j}:\phi \in (F\cup \id\cup F^{*})^{m},1\leq j\leq m\}.$ Let $\alpha_{S}\colon B(V_{F,m},l^{p}(d_{i}))\to l^{\infty}(\NN,l^{p}(d_{i}))$ be given by $\alpha_{S}(T)(j)=\chi_{\{l\leq m\}}(j)T(v_{j}).$ We define}
\[f.\dim_{\Sigma,l^m{p}}(S,F,m,\delta,\varepsilon,\Phi,\rho)=\limsup_{i\to \infty}\frac{1}{d_{i}}d_{\varepsilon}(\alpha_{S}(\Hom_{\R,l^{p}}(S,F,m,\delta,\sigma_{i})),\rho_{p,d_{i}}),\]
\[f.\dim_{\Sigma,l^{p}}(S,\varepsilon,\Phi,\rho)=\inf_{F\subseteq \Phi\mbox{\emph{ finite}},m\in\NN,\delta>0}\opdim_{\Sigma,M,l^{p}}(S,F,m,\delta,\varepsilon,\rho),\]
\[f.\dim_{\Sigma,l^{p}}(S,\Phi,\rho)=\sup_{\varepsilon>0}f.\dim_{\Sigma,l^{p}}(S,\varepsilon,\Phi,\rho).\]
\end{definition}

	We shall define $\underline{f.\dim}_{\Sigma,l^{p}}(S,\Phi,\rho)$ for the same thing, except replacing all the limit supremums with limit infimums.

\begin{definition} \emph{A} product norm \emph{ on $l^{\infty}(\NN)$ is a norm $\rho$ such that $\rho(f)\leq \rho(g)$ if $|f|\leq |g|,$ and such that $\rho$ induces the topology of pointwise convergence on $\{f:\|f\|_{\infty}\leq 1\}.$}\end{definition}

	A typical example is 
\[\rho(f)=\left(\sum_{j=1}^{\infty}\frac{1}{2^{j}}|f(j)|^{p}\right)^{1/p}\]
for $1\leq p<\infty.$

	We will show that 
\[f.\dim_{\Sigma,l^{p}}(S,\Phi,\rho)=f.\dim_{\Sigma,l^{p}}(S',\Phi',\rho')\]
if $S,S'$ are two dynamically generating sequences, $\Phi,\Phi'$ are two graphings  and $\rho,\rho'$ are two product norms. Thus we can define $\dim_{\Sigma,l^{p}}(V,\R)$ to be either of these common numbers. Here is a rough outline of the proof. We first show that 
\[\opdim_{\Sigma,\infty,l^{p}}(\F,\rho_{\F,i})=\opdim_{\Sigma,\infty,l^{p}}(\F',\rho_{\F',i})\]
when  $\F,\F'$ are two $q$-dynamical filtrations and $\rho$ is a prodduct norm. Thus we can define
\[\opdim_{\Sigma,\infty,l^{p}}(q,\Phi,\rho)\]
to be these common numbers. After that, we will show that 
\[\opdim_{\Sigma,\infty,l^{p}}(q,\Phi',\rho)=\opdim_{\Sigma,\infty,l^{p}}(q,\Phi',\rho)\]
where $\Phi,\Phi'$ are two graphings. We can then call this number 
\[\opdim_{\Sigma,l^{p}}(q,\rho)\]

	We then show that 
\[f.\dim_{\Sigma,l^{p}}(S,\Phi,\rho)=f.\dim_{\Sigma,l^{p}}(S,\Phi,\rho'),\]
where $\rho,\rho'$ are two product norms. Putting all of this together implies that 
\[f.\dim_{\Sigma,l^{p}}(S,\Phi,\rho),\opdim_{\Sigma,l^{p},\infty}(q,\rho_{p,d_{i}})\]
do not depend on $q,S,\Phi$ or $\rho.$ We thus define $\dim_{\Sigma,l^{p}}(V,\R)$ to be any of these common numbers.

	The proof of all these facts will follow quite parallel to the proofs in \cite{Me}.
\section{Proof of Invariance}

	Our first Lemma is taken directly from \cite{Me}, Proposition 3.1 and will be used frequently without comment.

\begin{lemma}Let $Y$ be a separable Banach space with the $C$-bounded approximation property, and let $I$ be a countable directed set. Let $(Y_{\alpha})_{\alpha\in I}$ be a increasing net of subspaces of $Y$ such that
\[Y=\overline{\bigcup_{\alpha}Y_{\alpha}}.\]
Then  there are finite-rank maps $\theta_{\alpha}\colon Y\to Y_{\alpha}$  such that $\|\theta_{\alpha}\|\leq C$ and
\[\lim_{\alpha}\|\theta_{\alpha}(y)-y\|= 0\]
for all $y\in Y.$

\end{lemma}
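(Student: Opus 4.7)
The plan is to use the $C$-BAP of $Y$ to produce a sequence of finite-rank maps approximating the identity on all of $Y$, and then perturb each such map so that its image lands in $Y_\alpha$, using the density of $\bigcup_\beta Y_\beta$. The key observation is that the image of any finite-rank map is a \emph{fixed} finite-dimensional subspace, which can be approximated basis-vector-by-basis-vector by vectors from $Y_{\beta}$ for $\beta$ sufficiently large in $I$.

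First I would invoke $C$-BAP together with the separability of $Y$ to extract a sequence of finite-rank maps $S_n \colon Y \to Y$ with $\|S_n\| \leq C$ and $S_n \to \id$ in the strong operator topology (this is standard: SOT is metrizable on norm-bounded sets for separable $Y$, so a sequence can be pulled from the net). For each $n$, fix a basis $e_1^n,\ldots,e_{k_n}^n$ of $F_n := \Ima(S_n)$, and let $K_n \in (0,\infty)$ be an equivalence constant satisfying $\sum_i |c_i| \leq K_n \bigl\|\sum_i c_i e_i^n\bigr\|$. Using density of $\bigcup_\beta Y_\beta$ and the directedness of $I$, choose an increasing sequence $\beta_1 \leq \beta_2 \leq \cdots$ in $I$ and vectors $f_i^n \in Y_{\beta_n}$ with $\|f_i^n - e_i^n\| < 1/(nK_n)$. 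Define $T_n \colon F_n \to Y_{\beta_n}$ by $T_n(e_i^n) = f_i^n$. A direct estimate gives $\|T_n - \iota_n\| \leq 1/n$, where $\iota_n \colon F_n \hookrightarrow Y$ is the inclusion, so in particular $\|T_n\| \leq 1 + 1/n$.

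For each $\alpha \in I$, let $N(\alpha) = \max\{n : \beta_n \leq \alpha\}$, taken to be $0$ if no such $n$ exists, and define
\[
\theta_\alpha = \bigl(1 + 1/N(\alpha)\bigr)^{-1}\, T_{N(\alpha)} \circ S_{N(\alpha)}
\]
when $N(\alpha) \geq 1$, and $\theta_\alpha = 0$ otherwise. Since $\beta_{N(\alpha)} \leq \alpha$ implies $Y_{\beta_{N(\alpha)}} \subseteq Y_\alpha$, the map $\theta_\alpha$ has finite rank and takes values in $Y_\alpha$, while the rescaling yields $\|\theta_\alpha\| \leq C$ exactly. As $\alpha$ runs cofinally in $I$ we have $N(\alpha) \to \infty$, whence for fixed $y \in Y$ the BAP gives $S_{N(\alpha)}(y) \to y$, the perturbation error $T_{N(\alpha)}(S_{N(\alpha)}(y)) - S_{N(\alpha)}(y)$ has norm at most $(C/N(\alpha))\|y\|$, and the scalar factor tends to $1$, so $\theta_\alpha(y) \to y$. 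The main technical point is obtaining the norm bound $C$ on the nose rather than $C + o(1)$; this is what forces both the very fine choice of the $f_i^n$ (controlled via the basis-dependent constant $K_n$) and the explicit rescaling by $(1 + 1/N(\alpha))^{-1}$.
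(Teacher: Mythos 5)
The paper offers no proof of this lemma at all --- it is imported verbatim as Proposition 3.1 of \cite{Me} --- so there is nothing internal to compare against; your argument is the standard one and is essentially correct. You extract a sequence $S_{n}$ of finite-rank, $C$-bounded approximants of the identity (legitimate: on the ball of radius $C$ the strong operator topology is metrizable when $Y$ is separable, and $C\geq 1$ forces $\id$ into that ball), perturb a basis of the finite-dimensional space $\Ima(S_{n})$ into some $Y_{\beta_{n}}$ using density of $\bigcup_{\beta}Y_{\beta}$ and directedness of $I$, and rescale by $(1+1/N(\alpha))^{-1}$ to land at norm exactly $C$; all the estimates check out. The one point to tighten is the definition $N(\alpha)=\max\{n:\beta_{n}\leq\alpha\}$: as written, nothing prevents $\beta_{n}\leq\alpha$ for \emph{every} $n$ (for instance if all the approximating vectors $f^{n}_{i}$ happen to lie in a single $Y_{\beta}$ and you take $\beta_{n}$ constant), in which case the maximum does not exist and $\theta_{\alpha}$ is undefined. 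The repair is to insist that $(\beta_{n})$ be chosen increasing \emph{and cofinal} in $I$, which is possible since $I$ is countable and directed; then any $\alpha$ dominating every $\beta_{n}$ would be a maximum of $I$, and in that degenerate case the lemma is false anyway for infinite-dimensional $Y$ (the net limit would force the finite-rank map $\theta_{\alpha^{*}}$ at the top element to equal $\id$), so it is implicitly excluded by the statement. With that one-line adjustment your proof is complete.
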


	The next Lemma will be crucially used in passing between $\opdim$ and $\dim.$

\begin{lemma}\label{L:makeinv} Fix $1\leq p<\infty.$ Let $((X,\mu),\R,\Phi,V,W,q,\Sigma)$ be a quotient dimension tuple and $\F=(b_{j,\phi},W_{F,k})$ a  $(q,\Phi)$-dynamical filtration. Let $G\subseteq W$ be a finite-dimensional linear subspace and $\kappa>0.$ Let $\rho$ be a product norm and $\lambda>0$ so that $W$ has the $\lambda$-bounded approximation property.  Fix $M>\lambda.$  Then there is a $F\subseteq \Phi$ finite, $m\in \NN,$ $\delta,\varepsilon>0$ and linear maps 
\[L_{i}\colon l^{\infty}(\NN,l^{p}(d_{i}))\to B(W,l^{p}(d_{i})),\]
so that if $f\in l^{\infty}(\NN,l^{p}(d_{i})),T\in \Hom_{\R,l^{p}}(\F,F,m,\delta,\sigma_{i})$ and $B\subseteq \{1,\cdots,d_{i}\}$ has $|B|\geq (1-\varepsilon)d_{i},$ and $\rho_{l^{p}(d_{i})}(\chi_{B}(\alpha_{\F}(T)-f))<\varepsilon,$ then there is a $C\subseteq \{1,\cdots,d_{i}\}$ with $|C|\geq (1-\eta)d_{i}$ such that 
\[\|L_{i}(f)\|_{W\to l^{p}(C)}\leq M,\]
\[\|L_{i}(f)\big|_{G}-T\big|_{G}\|_{G\to l^{p}(C)}\leq \kappa.\]\end{lemma}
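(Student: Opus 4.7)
The plan is to construct $L_i(f)$ explicitly, using the $\lambda$-bounded approximation property of $W$ to cut down to a finite-dimensional subspace containing an approximation of $G$, and then to read off a candidate operator from the coefficients of $f$ against the sofic matrices $\sigma_i$ applied along the dynamical filtration. The essential point is that a genuinely equivariant $T$ is determined on $\Span\{b_{j,\phi}\}$ by the single sequence $\alpha_\F(T)(j) = T(b_{\id,j})$ via $T(b_{\phi_1\cdots\phi_k,j}) = \sigma_i(\phi_1)\cdots\sigma_i(\phi_k) T(b_{\id,j})$; I use this formula verbatim to define $L_i(f)$.

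First I apply the preceding lemma to the increasing net $\{W_{F,k}\}$, whose union is dense in $W$ (since $q(b_{\id,j})$ is dynamically generating and $\ker(q) \subseteq \overline{\bigcup W_{F,k}}$). This produces $F_0 \subseteq \Phi$ finite, $k_0 \in \NN$, and a finite rank map $\theta \colon W \to W_{F_0,k_0}$ with $\|\theta\| \leq \lambda'$ for some $\lambda < \lambda' < M$, and $\|\theta(g) - g\| \leq \kappa'$ for $g$ in the unit ball of $G$, where $\kappa'$ will be chosen small. Let $N = \dim\theta(W)$, pick an Auerbach basis $e_1,\ldots,e_N \in \theta(W)$ with dual basis $\alpha_l \in \theta(W)^*$ of norm at most one, and decompose
\[ e_l = \sum_{(j,\phi)\in S} c_{l,j,\phi}\, b_{j,\phi} + r_l, \qquad r_l \in \ker(q)\cap W_{F_0,k_0}, \]
where $S$ is a fixed finite index set with $j \leq k_0$ and $\phi \in \mathcal{W}_{k_0}(F_0)$. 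Define
\[ L_i(f)(w) := \sum_{l=1}^N \alpha_l(\theta(w)) \sum_{(j,\phi)\in S} c_{l,j,\phi}\, \sigma_i(\phi_1)\cdots\sigma_i(\phi_{|\phi|})\, f(j), \]
which is linear in $f$ and factors as $L_i(f) = Q_i(f)\circ\theta$ for an obvious finite-dimensional $Q_i(f) \colon \theta(W) \to l^p(d_i)$.

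Take $F \supseteq F_0$ finite containing every letter appearing in $S$, $m \geq k_0$, and $\delta,\varepsilon > 0$ small (chosen at the end). Given $T,f,B$ as in the hypothesis let $A$ be the good set from the $\Hom_{\R,l^p}$ definition and set $C := A \cap B$. The core computation is the chain, valid in $l^p(C)$ for $w$ in the unit ball of $G$,
\[ T(w) \approx T(\theta(w)) = \sum_l \alpha_l(\theta(w))\Bigl[\sum_{(j,\phi)\in S} c_{l,j,\phi}\, T(b_{j,\phi}) + T(r_l)\Bigr] \approx L_i(\alpha_\F(T))(w) \approx L_i(f)(w), \]
where the first $\approx$ uses $\|\theta(w)-w\| \leq \kappa'$; the second uses the $\Hom_{\R,l^p}$ equivariance estimates on $A$ together with $\|T|_{\ker(q)\cap W_{F,m}}\| \leq \delta$ applied to the $r_l$; and the third uses the hypothesis $\rho(\chi_B(\alpha_\F(T)-f)) < \varepsilon$ together with the product-norm property (pointwise convergence on bounded sets) to control $\|f(j) - T(b_{\id,j})\|_{l^p(B)}$ for the finitely many $j$ appearing in $S$. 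The same estimate applied to arbitrary unit vectors in $\theta(W)$ yields $\|Q_i(f)|_{\theta(W) \to l^p(C)}\| \leq 1 + o(1)$, and therefore $\|L_i(f)\|_{W \to l^p(C)} \leq \lambda'(1+o(1)) < M$ by choice of $\lambda'$.

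The main obstacle is bookkeeping: all of the finite-dimensional data ($N$, the coefficients $c_{l,j,\phi}$, the operator norms $\|\alpha_l\circ\theta\|$) are determined by the initial choice of $\theta$ and must be frozen before choosing $\delta,\varepsilon$, so that the many small errors produced in the chain above can be absorbed into the single output tolerance $\kappa$. A subtler point is the passage from the global product-norm estimate on $\alpha_\F(T)-f$ to pointwise $l^p(B)$ control at the finitely many indices appearing in $S$; this is precisely what the definition of product norm provides, since $S$ is fixed before $\varepsilon$ is chosen.
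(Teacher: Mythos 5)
Your proposal is correct and follows essentially the same route as the paper: both construct the candidate operator by freezing a finite-rank map $\theta$ from the bounded approximation property, decomposing a basis of its range in terms of the filtration vectors $b_{\psi,j}$ modulo $\ker(q)$, defining $L_i(f)$ by the formula $\sigma_i(\phi_1)\cdots\sigma_i(\phi_k)f(j)$, and then running the same chain of approximations (BAP error, $\Hom_{\R,l^p}$ equivariance on the good set $A$, kernel smallness, and the product-norm control at finitely many indices) with all finite-dimensional constants fixed before $\delta,\varepsilon$. The only cosmetic difference is your use of an Auerbach basis of $\theta(W)$ where the paper selects a subset $\B_{F,m}$ making the $q(b_{\psi,j})$ a basis of $V_{F,m}$ and factors through $q$.
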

\begin{proof} Note that there is a $E\subseteq \Phi$ finite, $l\in \NN,$ so that 
\[\sup_{\substack{w\in G\\ \|w\|=1}}\inf_{\substack{v\in W_{E,l}\\ \|v\|=1}}\|v-w\|<\kappa.\]
Thus, we may assume that $G=W_{E,l}$ for some $E\subseteq \Phi$ finite, $l\in \NN.$ 

	Fix $\eta>0$ to be determined later. Let  $\theta_{F,k}\colon W\to W_{F,k}$ be such that 
\[\|\theta_{F,k}\|\leq \lambda,\]
\[\lim_{(F,k)}\|\theta_{F,k}(w)-w\|=0\mbox{ for all $w\in W$}.\]

	Choose $F,m$ sufficiently large so that
\[\|\theta_{F,m}\big|_{Y_{E,l}}-\id\big|_{Y_{E,l}}\|<\eta.\]

	Let $\B_{F,m}\subseteq F^{m}\times \{1,\cdots,m\}$ be such that $\{q(b_{\psi,j}:(\psi,j)\in \B_{F,m}\}$ is a basis for $V_{F,m}\colon=\Span\{q(b_{\psi,j}):(\psi,j)\in F^{m}\times \{1,\cdots,m\}\}.$ Define $\widetilde{L_{i}}\colon l^{\infty}(\NN,l^{p}(d_{i}))\to B(V_{F,m},l^{p}(d_{i}))$ by
\[\widetilde{L_{i}}(q(b_{\psi,j})=\sigma_{i}(\psi)f(j).\]

	We claim that if $\delta,\varepsilon>0$ are small enough, then for $f\in l^{\infty}(\NN,l^{p}(d_{i})),$ $T\in \Hom_{\R,l^{p}}(\F,F,m,\delta,\sigma_{i}),$ and $C\subseteq\{1,\cdots,d_{i}\}$ with $|C|\geq (1-\varepsilon)d_{i}$ and 
\[\rho(\chi_{C}(f-\alpha_{\F}(T)))<\varepsilon,\]
 there is a $B\subseteq \{1,\cdots,d_{i}\}$ so that $|B|\geq (1-\eta)d_{i}$ with
\[\|\widetilde{L_{i}}(f)\circ q\big|_{W_{E,l}}-T\big|_{W_{E,l}}\|_{W_{E,l}\to l^{p}(B)}\leq \eta.\]

	By finite-dimensionality, there is $D(F,m)>0$ so that if $v\in \ker(q)\cap W_{F,m}$ and $(\lambda_{\psi,r})\in \CC^{\B_{F,m}},$ then
\[\sup(\|v\|,|\lambda_{\psi,r}|)\leq D(F,m)\left\|v+\sum_{(\psi,r)\in \B_{F,m}}\lambda_{\psi,r}b_{\psi,r}\right\|.\]

	Thus if $x=v+\sum_{\psi,r}\lambda_{\psi,r}b_{\psi,r},$ and $C\subseteq B,$ then
\begin{equation}\label{E:almostinv}
\|\widetidle{L_{i}}(f)(q(x))-T(x)\|_{l^{p}(C)}\leq D(F,m)\delta+D(F,m)\sum_{\psi,r}\|\sigma_{i}(\psi)f(r)-T(b_{\psi,r})\|_{l^{p}(C)}.
\end{equation}

	Let $A\subseteq\{1,\cdots,d_{i}\}$ be such that $|A|\geq (1-\delta)d_{i},$ and
\[\|T(b_{\phi_{1}\cdots \phi_{m},j})-\sigma_{i}(\phi_{1})\cdots\sigma_{i}(\phi_{m})T(b_{\id,j})\|_{l^{p}(A)}<\delta,\]
and set $C=B\cap A.$ Then by $(\ref{E:almostinv})$ we have
\[\|\widetidle{L_{i}}(f)(q(x))-T(x)\|_{l^{p}(C)}\leq D(F,m)\delta+D(F,m)|F|^{m}m\delta+\sum_{(\psi,r)}\|f(r)-T(b_{\psi,r})\|_{l^{p}(C)},\]
so it suffices to choose $\delta,\varepsilon>0$ sufficiently small so that 
\[\delta+\varepsilon<\eta,\]
\[\delta<\frac{\eta}{2D(F,m)(1+|F|^{m}m)},\]
 and if $g\in l^{\infty}(\NN)$ has $\rho(g)<\varepsilon$ then
\[\sum_{(\psi,r)\in \B_{F,m}}g(r)<\frac{\eta}{2}.\]

	Now suppose that $\delta,\varepsilon>0$ are so chosen and set $L_{i}(f)=\widetilde{L_{i}}(f)\circ q\big|_{W_{F,m}}\circ \theta_{F,m},$ then if $T,f,C$ are as above and $w\in W_{E,l},$ then
\[\|L_{i}(f)(w)-T(w)\|_{l^{p}(C)}\leq(1+\eta)\|\theta_{F,m}(w)-w\|+\eta\|w\|\leq \eta(1+2\eta)\|w\|,\]
so it suffices to choose $\eta$ so that 
\[\eta(1+2\eta)<\kappa,\]
\[\lambda(1+\eta)<M.\]
\end{proof}

Our next lemma allows us to switch between two different pseudonorms.

\begin{lemma}\label{L:diffprodnorm} Fix $1\leq p<\infty.$ Let $((X,\mu),\R,\Phi,V,W,q,\Sigma)$ be a quotient dimension tuple and $\F=(b_{j,\phi},W_{F,k})$ a $(q,\Phi)$-dynamical filtration. Let $\F$ be a $(q,\Phi)$-dynamical filtration, $\rho$ a monotone product norm, and let $C>0$ so that $W$ has the $C$-bounded approximation property.

(a) If $C<M<\infty,$ then
\[f.\dim_{\Sigma,\infty,l^{p}}(\F,\Phi,\rho_{\F,i})=\opdim_{\Sigma,M,l^{p}}(\F,\Phi,\rho_{\F,i})\]
\[\underline{f.\dim}_{\Sigma,\infty,l^{p}}(\F,\Phi,\rho_{\F,i})=\underline{\opdim}_{\Sigma,M,l^{p}}(\F,\Phi,\rho_{\F,i}).\]

(b) If $\rho'$ is any other product norm, then for all $M>0,$ 
\[\opdim_{\Sigma,M,l^{p}}(\F,\Phi,\rho_{\F,i})=\opdim_{\Sigma,M,l^{p}}(\F,\Phi,\rho'_{\F,i})\]
\[\underline{\opdim}_{\Sigma,M,l^{p}}(\F,\Phi,\rho_{\F,i})=\underline{\opdim}_{\Sigma,M,l^{p}}(\F,\Phi,\rho'_{\F,i}).\]

\end{lemma}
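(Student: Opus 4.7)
The plan is to prove (a) by using Lemma \ref{L:makeinv} to transfer $\varepsilon$-containment in $l^{\infty}(\NN,l^{p}(d_{i}))$ back to $(\varepsilon,M)$-containment in $B(W,l^{p}(d_{i}))$, and to prove (b) by exploiting the fact that any two product norms induce the same topology (pointwise convergence) on norm-bounded subsets of $l^{\infty}(\NN)$.

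For part (a), I would first establish the easy direction $f.\dim_{\Sigma,\infty,l^{p}}\leq\opdim_{\Sigma,M,l^{p}}$ from the tautological identity
\[\rho_{\F,i}(m_{\chi_{C}}(T-S))=\rho_{p,d_{i}}(\chi_{C}(\alpha_{\F}(T)-\alpha_{\F}(S))),\]
since then applying $\alpha_{\F}$ to any $(\varepsilon,M)$-containing subspace yields an $\varepsilon$-containing subspace of no larger dimension. For the reverse direction, I would fix $\varepsilon>0$ and $M>C$, set $R=(M+1)\sup_{j}\|b_{\id,j}\|$, and use the product-norm property of $\rho$ twice: once to pick $N\in\NN$ so that $\rho(g)<\varepsilon/3$ whenever $\|g\|_{\infty}\leq R$ and $g$ vanishes on $\{1,\dots,N\}$, and once to pick $\kappa>0$ so small that $\rho(\kappa B\chi_{\{1,\dots,N\}})<\varepsilon/3$ with $B=\max_{j\leq N}\|b_{\id,j}\|$. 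Then I would apply Lemma \ref{L:makeinv} to $G=\Span\{b_{\id,j}:j\leq N\}$ and $\kappa$, obtaining parameters $F_{1},m_{1},\delta_{1},\varepsilon_{1}$ and linear maps $L_{i}\colon l^{\infty}(\NN,l^{p}(d_{i}))\to B(W,l^{p}(d_{i}))$. Given any candidate $F_{0},m_{0},\delta_{0}$ from the infimum defining $f.\dim$, I would enlarge to $F=F_{0}\cup F_{1}$, $m=\max(m_{0},m_{1})$, $\delta=\min(\delta_{0},\delta_{1})$, so that $\Hom(F,m,\delta,\sigma_{i})\subseteq\Hom(F_{1},m_{1},\delta_{1},\sigma_{i})$, and show that any $\min(\varepsilon_{1},\varepsilon/3)$-containing subspace $\Omega'$ of $\alpha_{\F}(\Hom(F,m,\delta,\sigma_{i}))$ gives an $(\varepsilon,M)$-containing subspace $L_{i}(\Omega')$ of $\Hom(F,m,\delta,\sigma_{i})$ of no larger dimension, by splitting the sequence $j\mapsto\|\chi_{C}(T-L_{i}(S'))(b_{\id,j})\|_{l^{p}(d_{i})}$ into its head (controlled by $\kappa$ via the lemma's conclusion on $G$) and its tail (controlled by $N$ using the operator-norm bounds $\|L_{i}(S')\|\leq M$ and $\|T\|\leq 1$). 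Taking limsup, then inf over $F,m,\delta$, then sup over $\varepsilon>0$ completes the equality; the underline version follows by reading liminf in place of limsup throughout, as every inequality is term-by-term in $i$.

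For part (b), by symmetry it suffices to prove one direction, say $\opdim_{\Sigma,M,l^{p}}(\F,\Phi,\rho_{\F,i})\leq\opdim_{\Sigma,M,l^{p}}(\F,\Phi,\rho'_{\F,i})$. I would fix $\varepsilon>0$, let $R$ be as above, and use the property of $\rho$ to pick $N$ and $\delta_{0}>0$ so that any $g$ with $\|g\|_{\infty}\leq R$ and $|g(j)|<\delta_{0}$ for $j\leq N$ has $\rho(g)<\varepsilon$; then use the property of $\rho'$ to pick $\eta>0$ so that $\rho'(g)<\eta$ and $\|g\|_{\infty}\leq R$ force $|g(j)|<\delta_{0}$ for $j\leq N$. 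Applied to $g(j)=\|\chi_{C}(T-S)(b_{\id,j})\|_{p}$, which lies in the $R$-bounded set since $\|T\|\leq 1$ and $\|S\|\leq M$, this shows any $(\eta,M)$-containing subspace in $\rho'_{\F,i}$ is automatically $(\varepsilon,M)$-containing in $\rho_{\F,i}$, so $d_{\varepsilon,M}(\cdot,\rho_{\F,i})\leq d_{\eta,M}(\cdot,\rho'_{\F,i})$ for each $i$; taking limsup (respectively liminf), inf over $F,m,\delta$, and sup over $\varepsilon$ gives the claim.

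The hard part will be the bookkeeping in (a): $N$ and $\kappa$ must be fixed before invoking Lemma \ref{L:makeinv}, and the lemma's output $F_{1},m_{1},\delta_{1}$ must then be merged with arbitrary comparison parameters $F_{0},m_{0},\delta_{0}$ from the infimum on the $f.\dim$-side so that the $\Hom$-set containments all point the correct way. Once this quantifier ordering is settled, the head/tail split using monotonicity of $\rho$ is routine.
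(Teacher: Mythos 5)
Your proposal is correct and follows essentially the same route as the paper: part (a) is proved by invoking Lemma \ref{L:makeinv} on a finite head $\{b_{\id,j}:j\leq N\}$ and splitting $j\mapsto\|(T-L_{i}(f))(b_{\id,j})\|$ into a head controlled by $\kappa$ and a tail controlled by the product-norm cutoff $N$, while the easy direction is the tautological identity under $\alpha_{\F}$; part (b) is exactly the compactness of the $\|\cdot\|_{\infty}$-ball in the product topology, which the paper states in one line and you spell out. The only differences are expository: you make explicit the direction of (a) and the uniformity argument in (b) that the paper leaves implicit.
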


\begin{proof}

(a) Let $\F=((b_{\phi,j}),(W_{F,l})_{F\subseteq \mathcal{W}(\Phi)\mbox{ finite},l\in \NN}).$ Let $A$ be such that 
\[\|b_{\phi,j}\|\leq A,\]

	Let $1>\varepsilon'>0.$ Find $k\in \NN,$ so that if $\|f\|_{\infty}\leq 1,$ and $f$ is supported on $\{n:n\geq k\},$ then $\rho(f)<\varepsilon'.$  Since $\rho$ induces a topology weaker than the norm topology, we can find a $\varepsilon'>\kappa>0$ so that $\rho(f)<\varepsilon',$ if $\|f\|_{\infty}\leq \kappa.$ 

	Let $\id\in E\subseteq \Phi$ be finite $\varepsilon'>\varepsilon>0,m\in \NN,\delta>0$ and $L_{i}\colon l^{\infty}(\NN,l^{p}(d_{i}))\to B(W,l^{p}(d_{i}))$ be as in the proceeding lemma for this $M,\kappa,$ and the finite-dimensional subspace $W_{\{\id\},k}.$ 

	Suppose $T\in \Hom_{\R,l^{p}}(\F,F,m,\delta,\sigma_{i}),f\in l^{\infty}(\NN,V_{i}),$ and $C\subseteq\{1,\cdots,d_{i}\}$ has $|C|\geq (1-\varepsilon)d_{i}$ with
\[\rho(\chi_{C}(f-\alpha_{\F}(T))<\varepsilon.\]
Let $B\subseteq\{1,\cdots,d_{i}\}$ be such that $|B|\geq(1-\kappa)d_{i},$ 
\[\|L_{i}(f)\|_{W\to l^{p}(B)}\leq M,\]
\[\|L_{i}(f)\big|_{W_{\{\id\},k}}-T\big|_{W_{\{\id\},k}}\|_{W_{\{\id\},k}\to l^{p}(B)}\leq \kappa.\]

	Then
\begin{align*}
\rho_{\F,i}(\chi_{B}(L_{i}(f)-T))&\leq M\varepsilon+\rho(j\to \|L_{i}(f)(b_{\{\id\},j})-T(b_{\{\id\},j})\|_{l^{p}(C)})\\
&\leq M\varepsilon'+A\varepsilon'.
\end{align*}

	Thus
\[\opdim_{\Sigma,M}(\F,F_{0},m_{0},\delta_{0},(M+A)\varepsilon',\Phi,\rho)\leq f.\dim_{\Sigma}(\F,F_{0},m_{0},\delta,\varepsilon,\Phi,\rho)\]
if $F_{0}\supseteq F,m_{0}\geq m,\delta_{0}<\delta.$ Thus
\[\opdim_{\Sigma,M}(\F,(M+A)\varepsilon',\Phi,\rho)\leq f.\dim_{\Sigma}(\F,\Phi,\rho),\]
and since $\varepsilon'$ was arbitrary, we are done. 

(b) This follows from compactness of $\|\cdot\|_{\infty}$ unit ball in the product topology.

\end{proof}

	We now proceed to show equality when we switching graphings, it is enough to handle the case of simply increasing the graphing.

\begin{lemma}\label{L:samepseudo} Fix $1\leq p<\infty.$ Let $((X,\mu),\R,\Phi,V,W,q,\Sigma)$ be a quotient dimension tuple and $\F=(b_{j,\phi},W_{F,k})$ a  $(q,\Phi)$-dynamical filtration.  Let $\Phi\subseteq \Phi'\subseteq [[\R]]$ with $\Phi'$ countable. Let  $\F'=((b_{j,\phi}'),W'_{F,k})$ be a $(q,\Phi')$ dynamical filtration extending $\F$. Suppose that  $\Sigma'$ is any sofic approximation  then 
\[\opdim_{\Sigma,\infty,l^{p}}(\F,\Phi,\rho)=\opdim_{\Sigma,\infty,l^{p}}(\F',\Phi',\rho),\]
\[\underline{\opdim}_{\Sigma,\infty,l^{p}}(\F,\Phi,\rho)=\underline{\opdim}_{\Sigma,\infty,l^{p}}(\F',\Phi',\rho).\]
\end{lemma}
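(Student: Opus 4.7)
My plan is to prove both inequalities in turn. For $\opdim_{\Sigma,\infty,l^p}(\F',\Phi',\rho)\leq\opdim_{\Sigma,\infty,l^p}(\F,\Phi,\rho)$, I would observe that $\F'$ extends $\F$: for any finite $F\subseteq\Phi$ and any $T\in\Hom_{\R,l^p}(\F',F,m,\delta,\sigma_i)$, the equivariance constraint on $b'_{\phi_1\cdots\phi_k,j}=b_{\phi_1\cdots\phi_k,j}$ (for $\phi_\ell\in F\subseteq\Phi$) is identical to the one imposed on $T\in\Hom_{\R,l^p}(\F,F,m,\delta,\sigma_i)$, and the inclusion $\ker(q)\cap W_{F,m}\subseteq\ker(q)\cap W'_{F,m}$ transfers the kernel constraint. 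Since the pseudonorms $\rho_{\F,i}$ and $\rho_{\F',i}$ agree (via $b'_{\id,j}=b_{\id,j}$), we get $d_{\varepsilon,\infty}(\Hom_{\R,l^p}(\F',F,m,\delta,\sigma_i),\rho_{\F',i})\leq d_{\varepsilon,\infty}(\Hom_{\R,l^p}(\F,F,m,\delta,\sigma_i),\rho_{\F,i})$. Taking infima over $F\subseteq\Phi\subseteq\Phi'$ and limsup yields the inequality.

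For the reverse direction $\opdim_{\Sigma,\infty,l^p}(\F,\Phi,\rho)\leq\opdim_{\Sigma,\infty,l^p}(\F',\Phi',\rho)$, I would convert any $T\in\Hom_{\R,l^p}(\F,F,m,\delta,\sigma_i)$ into an almost-equivariant map for $\F'$ that agrees with $T$ on the seed generators $b_{\id,j}$. Fix target parameters $F'\subseteq\Phi'$ finite, $m'\in\NN$, $\delta'>0$, a truncation level $k\in\NN$, and a constant $M$ greater than the bounded approximation constant of $W$. Replaying the construction in the proof of Lemma \ref{L:makeinv} but with $\F'$ in place of $\F$, one builds $F'_1\subseteq\Phi'$ finite, $m'_1\in\NN$, $\delta'_1,\varepsilon'_1>0$ and a linear map $L'_i\colon l^\infty(\NN,l^p(d_i))\to B(W,l^p(d_i))$ of the form $L'_i(f)=\widetilde{L'_i}(f)\circ q\circ\theta_{F'_1,m'_1}$ with $\widetilde{L'_i}(f)(q(b'_{\psi,r}))=\sigma_i(\psi)f(r)$ on a chosen basis of $q(W'_{F'_1,m'_1})$.

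Given $T\in\Hom_{\R,l^p}(\F,F,m,\delta,\sigma_i)$ for $F\supseteq F'_1\cap\Phi$ sufficiently large, $m\geq m'_1$, $\delta\leq\delta'_1$, set $T'=L'_i(\alpha_\F(T))$; since $b_{\id,j}=b'_{\id,j}$, $\alpha_\F(T)=\alpha_{\F'}(T)$. Repeating the estimates in the proof of Lemma \ref{L:makeinv}, together with the almost-multiplicativity of $\sigma_i$ and the near-identity behavior of $\theta_{F'_1,m'_1}$ on $W'_{F',m'}$, shows that $T'$ is $M$-bounded on a subset $C\subseteq\{1,\dots,d_i\}$ of density at least $1-\eta$, that $T'/M\in\Hom_{\R,l^p}(\F',F',m',\delta'',\sigma_i)$ (after truncating by $\chi_C$ to enforce the normalization $\|T'/M\|\leq 1$) for some $\delta''$ tending to zero with the parameters, and that $T'(b_{\id,j})\approx T(b_{\id,j})$ in $l^p(C)$ for $j\leq k$. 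Hence any subspace $Y'_i$ that $(\varepsilon'/M,\infty)$-contains $\Hom_{\R,l^p}(\F',F',m',\delta'',\sigma_i)$ in $\rho_{\F',i}$ yields, via some $S\in Y'_i$ approximating $T'/M$ and then $MS\in Y'_i$ approximating $T'$, an approximation of $T$ in $\rho_{\F,i}$ on the generators $b_{\id,j}$ for $j\leq k$. The tail at $j>k$ is controlled by choosing $k$ large, exploiting that the product norm $\rho$ induces the topology of pointwise convergence on the $\|\cdot\|_\infty$-unit ball. Taking infima and limsup gives the inequality; the $\underline{\opdim}$ statement follows by replacing limsup with liminf throughout.

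The principal obstacle is the parameter bookkeeping in the reverse direction: verifying that $T'/M$ genuinely lies in some $\Hom_{\R,l^p}(\F',F',m',\delta'',\sigma_i)$ whose parameters are admissible inside the infimum defining $\opdim$, and that the many approximation errors (from $\theta_{F'_1,m'_1}-\id$, from almost-multiplicativity of $\sigma_i$, from the basis expansion for elements of $W'_{F'_1,m'_1}$, from the rescaling by $M$, and from the product-norm tail) combine into a single controllable $\varepsilon$. This is the same kind of bookkeeping already carried out in the proof of Lemma \ref{L:makeinv}, only performed one level up.
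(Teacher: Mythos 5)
Your first inequality is fine and is exactly what the paper does: enlarging $\Phi$ to $\Phi'$ enlarges the family over which the infimum is taken, and since $\F'$ extends $\F$ the pseudonorms $\rho_{\F,i}$ and $\rho_{\F',i}$ agree, so nothing more is needed there.

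The reverse inequality is where your argument diverges from the paper's and where it has a genuine gap. The paper does \emph{not} build a new map: it shows that the very same $T\in \Hom_{\R,l^{p}}(\F,F,m,\delta,\sigma_{i})$ already lies in $\Hom_{\R,l^{p}}(\F',F',m',2\delta',\sigma_{i})$, so the covering numbers compare directly with no loss in $\varepsilon$. The engine for this is the step your proposal omits entirely: because $\Phi$ contains a graphing, every $\phi'\in\Phi'$ can be approximated in $\|\cdot\|_{2}$ by an element of $*\text{-}\Alg(\Phi)\cap[[\R]]$; the sofic approximation then forces $\sigma_{i}(\phi_{1}')\cdots\sigma_{i}(\phi_{m}')$ and $\sigma_{i}(\phi_{1})\cdots\sigma_{i}(\phi_{m})$ to agree pointwise off a set of density $\delta'$, and the filtration structure lets one write $b_{\phi_{1}'\cdots\phi_{m}',j}$ as $b_{\phi_{1}\cdots\phi_{m},j}$ plus a small error plus an element of $\ker(q)\cap W_{F,m}$ on which $T$ is already controlled. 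Without this bridge there is simply no way to relate the $\F'$-equivariance conditions (which involve $\sigma_{i}(\phi')$ for the new generators) to anything $T$ is assumed to satisfy.

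Your substitute --- running the Lemma \ref{L:makeinv} construction with $\F'$ in place of $\F$ and setting $T'=L_{i}'(\alpha_{\F}(T))$ --- is circular at the decisive point. The estimates in Lemma \ref{L:makeinv} that give $\|L_{i}(f)\|_{W\to l^{p}(C)}\leq M$ with $M$ just above the approximation constant, and that give $L_{i}(f)\approx T$ on a prescribed finite-dimensional subspace, both use that $T$ satisfies $\|T(b_{\psi,r})-\sigma_{i}(\psi)T(b_{\id,r})\|_{l^{p}(A)}<\delta$ for the words $\psi$ of the filtration being used --- here words in $F_{1}'\subseteq\Phi'$, i.e.\ precisely membership of $T$ in an $\F'$-Hom set, which is what you are trying to prove. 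The only a priori bound available instead comes from the basis constant $D(F_{1}',m_{1}')$, which is not uniform; rescaling $T'$ by such a constant to force $\|T'\|\leq 1$ degrades the containment parameter $\varepsilon'$ by an amount depending on $F',m'$, and since the definition of $\opdim$ takes $\sup_{\varepsilon'}$ \emph{outside} the infimum over $(F',m',\delta')$, an $\varepsilon'$ that shrinks with the parameters does not yield the required bound. To repair the argument you would in any case have to prove the approximation-and-transfer statement above, at which point the paper's direct containment of Hom sets is both simpler and avoids the quantifier problem.
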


\begin{proof} Let $C>0$ be such that for every $v\in V,$ there is a $w\in W$ so that $q(w)=v,$ and 
\[\|v\|\leq C\|w\|.\]

It is clear that
\[\opdim_{\Sigma',l^{p}}(\F,\Phi',\rho)\leq \opdim_{\Sigma,l^{p}}(\F,\Phi,\rho).\]

	For the opposite inequality, first note that for any  subset $E\subseteq *-\Alg(\Phi)\cap [[\R]]$  we have
\[\opdim_{\Sigma}(\F,\Phi,\rho)\leq \opdim_{\Sigma}(\F,E,\rho).\]

	Our assumptions imply that for any $\eta>0,$ for any $\psi\in [[\R]],$ there is a $\psi'\in [[\R]],$ with $v_{\psi'}\in *-\Alg(\Phi)$ such that 
\[\|v_{\psi}-v_{\psi'}\|_{2}<\eta.\]

	Fix $1\in F'\subseteq \Phi'$ finite, $\delta'>0$ and $m'\in \NN.$ Let $\eta>0$ to be determined later. By our above observation, we can find a finite subset $E\subseteq *-\Alg(\Phi)\cap [[\R]]$ such that for every $\phi'\in F',$ there is a $\phi\in E$ so that 
\[\|\phi_{1}\cdots \phi_{m}a_{j}-\phi_{1}'\cdots \phi_{m}'a_{j}\|<\frac{\delta}{C}\mbox{ for all $1\leq j\leq m,$ and $\phi_{1}',\cdots,\phi_{m}'\in F'$},\]
\[\|\phi_{1}\cdots \phi_{m}-\phi_{1}'\cdots\phi_{m}'\|_{2}<\eta \mbox{ for all $\phi_{1}',\cdots,\phi_{m}'\in F'$}.\]

	We use $\mathcal{W}(\Phi)$ for all finite products of elements in $\Phi\cup \Phi^{*}\cup \id,$ and we use $\mathcal{W}_{m}(\Phi)$ for $[\Phi\cup \Phi^{*}\cup \id]^{m}.$ 
	Thus we can find a finite subset $E\subseteq F\subseteq \mathcal{W}(\Phi),$ and an $m\in \NN$ and $w_{\phi_{1}'\cdots \phi_{m}',j}\in \ker(q)\cap W_{F,m}$ so that 
\[\|b_{\phi_{1}'\cdots\phi_{m}',j}-b_{\phi_{1}\cdots\phi_{m},j}-w_{\phi_{1}'\cdots\phi_{m}',j}\|<\delta.\]

	We may assume that $F,m$ are sufficiently large so that
\[\sup_{w\in \Ball(W_{F',m'}\cap \ker(q))}\inf_{v\in W_{F,m}}\|w-v\|<\delta,\]
\[E\subseteq \mathcal{W}_{m}(\Phi).\]

	Let $\delta>0$ which will depend upon $\delta',F',m'$ in a manner to be determined later.

	Fix $T\in \Hom_{\R,l^{p}}(\F,F,m,\delta,\sigma_{i})$ and suppose $A$ is such that
\[\|T(b_{j,\phi_{1}\cdots,\phi_{m}})-\sigma_{i}(\phi_{1})\cdots\sigma_{i}(\phi_{m})T(b_{j,\id})\|_{l^{p}(A)}<\delta\]
for all $\phi_{1}',\cdots,\phi_{m}'\in F'.$ Let $C$ be the set of $j$ in $\{1,\cdots,d_{i}\}$ so that whenever $\phi_{1},\cdots,\phi_{m}\in F,$ then  
\begin{align*}
j\notin \dom(\sigma_{i}(\phi_{1})\cdot \sigma_{i}(\phi_{m}))\Delta \dom(\sigma_{i}(\phi_{1}')\cdot \sigma_{i}(\phi_{m}'))\\
\sigma_{i}(\phi_{m})^{-1}\cdot \sigma_{i}(\phi_{1})^{-1}(j)=\sigma_{i}(\phi_{m}')^{-1}\cdot \sigma_{i}(\phi_{1})^{-1}(j), \mbox{ if either side is defined}.
\end{align*}

	If $\eta$ is sufficiently small, then for all large $i,$ $|C|\geq (1-\delta')d_{i}.$ 

	Thus for all $1\leq j\leq m$ and $\phi_{1},\cdots,\phi_{m}\in F$ we have
\begin{align*}
&\|T(b_{\phi_{1}'\cdots,\phi_{m}',j})-\sigma_{i}(\phi_{1}')\cdots\sigma_{i}(\phi_{m}')T(b_{\id,j})\|_{l^{p}(A\cap C)}\\
&=\|T(b_{\phi_{1}\cdots,\phi_{m},j})-\sigma_{i}(\phi_{1})\cdots\sigma_{i}(\phi_{m})T(b_{\id,j})\|_{l^{p}(A\cap C)}\\
&\leq \delta'\|w_{\phi_{1}'\cdots\phi_{m}',j}\|+\delta',
\end{align*}
also our assumptions on $F',m'$ ensure that 
\[\|T\big|_{\ker(q)\cap W_{F',m'}}\|\leq \delta+\delta'.\]

	Thus if $\delta$ is sufficiently small, we may ensure that $T\in \Hom_{\R,l^{p}}(\F,F',m',2\delta',\sigma_{i}).$ So for any $\varepsilon>0,$ we have 
\[\opdim_{\Sigma,l^{p}}(\F,\varepsilon,\Phi,\rho)\leq \opdim_{\Sigma',l^{p}}(\F',F',m',\delta',\varepsilon,\Phi',\rho).\]
Since $F',m',\delta',\varepsilon'$ were arbitrary, we see that 
\[\opdim_{\Sigma,l^{p}}(F,\Phi,\rho)\leq \opdim_{\Sigma',l^{p}}(\F',\Phi',\rho).\]
\end{proof}

	We now show that $\opdim_{\Sigma,\infty}(\F,\Phi,\rho_{\F,i})$ only depends upon $\Phi$ and the quotient map $q.$ Because of Lemmas \ref{L:makeinv},\ref{L:diffprodnorm},\ref{L:samepseudo} for any other $(q,\Phi)$-dynamical filtration $\F'$
\[\opdim_{\Sigma,\infty,l^{p}}(\F,\Phi,\rho_{\F,i})=\opdim_{\Sigma,l^{p}}(\F',\Phi,\rho_{\F,i}),\]
so the only difficulty is in switching $\rho_{\F,i}$ to $\rho_{\F',i}.$ To do this, we will have to investigate how much our definition of dimension depends on the choice of pseudonorm.

\begin{definition} \emph{ Let $((X,\mu),\R,\Phi,V,W,q,\Sigma)$ be a quotient dimension tuple and $\F=(b_{j,\phi},W_{F,k})$ a  $(q,\Phi)$-dynamical filtration. Let $\rho_{i},q_{i}$ be two sequence of pseudonorms on $B(W,l^{p}(d_{i})),$ we say that $\rho_{i}$ is } $(\F,\Sigma)$ weaker than $q_{i}$ \emph{and write $\rho_{i}\preceq_{\F,\Sigma}q_{i},$ if for every $\varepsilon'>0,$ there are $\varepsilon,\delta>0,m,i_{0}\in \NN,$  $F\subseteq \Phi$ finite, and linear maps $L_{i}\colon B(W,l^{p}(d_{i}))\to B(W,l^{p}(d_{i}))$ for $i\geq i_{0},$ so that if $G$ is a linear subspace of $B(W,l^{p}(d_{i}))$ and $\Hom_{\R,l^{p}}(\F,F,m,\delta,\sigma_{i})\subseteq_{\varepsilon,q_{i}}G,$ then $\Hom_{\R,l^{p}}(\F,F,m,\delta,\sigma_{i})\subseteq_{\varepsilon',\rho_{i}}L_{i}(G).$} \end{definition}

\begin{lemma}\label{L:weaker} Let $((X,\mu),\R,\Phi,V,W,q,\Sigma)$ be a  quotient dimension tuple and $\F=(b_{j,\phi},W_{F,k})$ a  $(q,\Phi)$-dynamical filtration. 

(a) If $\rho_{i},q_{i}$ are two sequence of pseudonorms on $B(W,l^{p}(d_{i}))$ and $\rho_{i}\preceq_{\F,\Sigma}q_{i},$ then
\[\opdim_{\Sigma,\infty,l^{p}}(\F,\Phi,\rho_{i})\leq \opdim_{\Sigma,\infty,l^{p}}(\F,\Phi,q_{i}),\]
\[\underline{\opdim}_{\Sigma,\infty,l^{p}}(\F,\Phi,\rho_{i})\leq \underline{\opdim}_{\Sigma,\infty,l^{p}}(\F,\Phi,q_{i}),\]

(b) Let $\F'$ be another $q$-dynamical filtration, then $\rho_{\F',i}\preceq_{\F,\Sigma}\rho_{\F,i}.$

\end{lemma}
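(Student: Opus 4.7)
For part (a), I would proceed by a direct definition chase. Fix $\varepsilon' > 0$ and invoke $\rho_i \preceq_{\F,\Sigma} q_i$ to obtain witnesses $\varepsilon, \delta > 0$, $m, i_0 \in \NN$, $F \subseteq \Phi$ finite, and linear maps $L_i$. Let $G_i$ be a subspace of $B(W, l^p(d_i))$ of minimal dimension $(\varepsilon, q_i)$-containing $\Hom_{\R, l^p}(\F, F, m, \delta, \sigma_i)$. By hypothesis $L_i(G_i)$ then $(\varepsilon', \rho_i)$-contains the same Hom set, and $\dim L_i(G_i) \leq \dim G_i$ by linearity; hence
\[d_{\varepsilon'}(\Hom_{\R, l^p}(\F, F, m, \delta, \sigma_i), \rho_i) \leq d_\varepsilon(\Hom_{\R, l^p}(\F, F, m, \delta, \sigma_i), q_i).\]
Provided the witnesses $(F, m, \delta)$ can be arranged to be arbitrarily fine (as will be the case for the construction in (b)), one divides by $d_i$, takes $\limsup$ (resp.\ $\liminf$), infimizes over $(F, m, \delta)$, and takes $\sup$ over $\varepsilon'$ to conclude $\opdim_{\Sigma,\infty,l^p}(\F, \Phi, \rho_i) \leq \opdim_{\Sigma,\infty,l^p}(\F, \Phi, q_i)$, and similarly for $\underline{\opdim}$.

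For part (b), fix a product norm $\rho$ and $\varepsilon' > 0$; we construct $(\varepsilon, \delta, m, F, L_i)$ witnessing $\rho_{\F',i} \preceq_{\F,\Sigma} \rho_{\F,i}$. Since $\rho$ induces pointwise convergence on norm-bounded sets, there is $k_0 \in \NN$ such that any sequence bounded by $\sup_j \|b'_{\id,j}\| + 1$ and supported on $\{j > k_0\}$ has $\rho$-norm less than $\varepsilon'/3$. For each $1 \leq j \leq k_0$, by dynamical generation of $\{q(b_{\id,k})\}_k$ and the filtration axiom $\ker q = \overline{\bigcup_{F,k}W_{F,k}\cap \ker q}$, we approximate
\[b'_{\id, j} = \sum_l c_{j, l}\, b_{\phi_{j, l}, k_{j, l}} + z_j + r_j\]
with $\phi_{j, l} \in \mathcal{W}_{m_1}(F_1)$ for some finite $F_1 \subseteq \Phi$ and $m_1 \in \NN$, $z_j \in \ker q \cap W_{F_1, m_1}$, and $\|r_j\|_W$ arbitrarily small. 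Define $L_i : B(W, l^p(d_i)) \to B(W, l^p(d_i))$ by setting
\[L_i(S)(b'_{\id, j}) := \sum_l c_{j, l}\, \sigma_i(\phi_{j, l})\, S(b_{\id, k_{j, l}}) \quad (1 \leq j \leq k_0),\]
zero on $b'_{\id, j}$ for $j > k_0$, and extended linearly and boundedly to all of $W$ via a Lemma~\ref{L:makeinv}-style projection.

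To verify the implication, take $F \supseteq F_1 \cup \{\phi_{j,l}\}$, $m \geq m_1$, and $\delta$ small. For $T \in \Hom_{\R, l^p}(\F, F, m, \delta, \sigma_i)$, the almost-equivariance on the $b_{\phi,k}$ and the bound $\|T|_{\ker q \cap W_{F, m}}\| \leq \delta$ imply that on a set $A_i$ of density at least $1 - \delta$,
\[T(b'_{\id, j}) \approx \sum_l c_{j, l}\, \sigma_i(\phi_{j, l})\, T(b_{\id, k_{j, l}})\]
with explicit error in $\delta$, $|c_{j, l}|$, and $\|r_j\|$. For $S_0 \in G$ satisfying $\rho_{\F, i}(\chi_{C_0}(T - S_0)) < \varepsilon$, on $C := A_i \cap C_0$ we estimate $\|(T - L_i(S_0))(b'_{\id, j})\|_{l^p(C)}$ coordinate-wise by $\sum_l |c_{j, l}| \|(T - S_0)(b_{\id, k_{j, l}})\|_{l^p(C_0)}$ plus negligible terms; for $j \leq k_0$ the coordinate control inherent in a product norm on bounded sequences renders this small, while the $j > k_0$ tail is handled by the truncation. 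Summing, $\rho_{\F', i}(\chi_C(T - L_i(S_0))) < \varepsilon'$ for $\varepsilon, \delta$ chosen sufficiently small. The main obstacle is extending $L_i$ to the whole $B(W, l^p(d_i))$ as a bounded linear map, so that the crucial bound $\dim L_i(G) \leq \dim G$ is available; this is where the bounded approximation property of $W$ together with the extension technique of Lemma~\ref{L:makeinv} are essential, and it is also what allows the refinement of $(F,m,\delta)$ needed to close the argument of part (a).
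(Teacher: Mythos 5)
Your proof is correct and takes essentially the same route as the paper: part (a) is the definition chase the paper leaves to the reader (and your caveat about refining the witnesses $(F,m,\delta)$ is handled exactly as you say, because the property of $L_i$ produced in (b) is pointwise in $T$ and so passes to the smaller Hom-sets), while part (b) is precisely the paper's argument --- truncate to $j\le k_0$ using the product norm, apply Lemma \ref{L:makeinv} to the finite-dimensional subspace spanned by $b'_{\id,1},\dots,b'_{\id,k_0}$ to produce $L_i(S)=\widetilde{L_i}(\alpha_{\F}(S))$, and control the two regimes $j\le k_0$ and $j>k_0$ separately. The only differences are cosmetic: you unfold the internals of Lemma \ref{L:makeinv} (the expansion of $b'_{\id,j}$ in terms of the $b_{\phi,k}$ modulo $\ker(q)$) rather than quoting it wholesale, and one cannot literally prescribe $L_i(S)(b'_{\id,j})=0$ for $j>k_0$ while extending linearly --- but nothing is lost, since the tail estimate only uses the uniform bound $\|L_i(S)\|\le M$ together with the choice of $k_0$.
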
 

\begin{proof}
(a) Follows directly from the definitions.

(b) Let $\F=((b_{\phi,j}),(W_{E,l})),\F'=((b_{\phi,j}'),(W_{E,l}')).$ Let $C>0$ be such that $W$ has the $C$-bounded approximation property, and

\[\|b_{\phi,j}\|\leq C,\]
\[\|b_{\phi,j}'\|\leq C.\]

	Fix $\varepsilon'>0>$ Choose $k\in \NN,$ so that if $f$ is supported on $\{n:n\geq k\}$ and $\|f\|_{\infty}\leq 1,$ then $\rho(f)<\varepsilon,$ and let $\varepsilon'>\kappa>0$ be such that $\rho(f)<\varepsilon'$ if $\|f\|_{\infty}\leq \kappa.$ Let $F,m,\delta,\varepsilon,$ and $L_{i}\colon l^{\infty}(\NN,l^{p}(d_{i}))\to B(W,l^{p}(d_{i}))$ be as Lemma $\ref{L:makeinv}$ for this $\kappa,$ $M=2C$ and the finite-dimensional subspace $W'_{\{\id\},k}.$  Define $\alpha_{F,i}\colon B(W,l^{p}(d_{i}))\to l^{\infty}(\NN,l^{p}(d_{i}))$ by $\alpha_{\F}(T)(n)=T(b_{\id,n}).$ Set $\widetilde{L_{i}}(T)=L_{i}(\alpha_{\F}(T)).$ We may assume that $m\geq k.$

	Suppose that $\Hom_{\R,l^{p}}(\F,F,m,\delta,\sigma_{i})\subseteq_{\varepsilon,\rho_{\F,i}}G,$ then by Lemma $\ref{L:makeinv},$ for every $T\in \Hom_{\R,l^{p}}(\F,F,m,\delta,\sigma_{i})$ we can find an $S\in B(W,l^{p}(d_{i}))$ and a $C\subseteq\{1,\cdots,d_{i}\}$ with $|C|\geq (1-\kappa)d_{i}$ so that 
\[\|\widetilde{L_{i}}(S)\|_{W\to l^{p}(C)}\leq 2C,\]
\[\|\widetidle{L_{i}}(S)\big|_{W_{\{\id\},k}'}-T\big|_{W_{\{\id\},k}'}\|<\kappa.\]

	Thus
\begin{align*}
\rho_{\F,i}(m_{\chi_{C}}(\widetilde{L_{i}}(S)-T))&\leq (2C+1)\varepsilon'+\rho(\chi_{j\leq k}\|\widetidle{L_{i}}(S)(b_{\id,j})-T(b_{\id,j})\|_{l^{p}(C)})\\
&\leq (2C+1)\varepsilon'+C\varepsilon'
\end{align*}

	This proves $(b).$ 
 
\end{proof}

\begin{cor}\label{L:indepfilt} Fix $1\leq p<\infty.$ Let $((X,\mu),\R,\Phi,V,W,q,\Sigma)$ be a quotient dimension tuple, and $\F$ a $(q,\Phi)$-dynamical filtration. If $\F'$ is another $(q,\Phi)$-dynamical filtration, and $\rho,\rho'$ are two product norms, then
\[\opdim_{\Sigma,\infty,l^{p}}(\F,\Phi,\rho)=\opdim_{\Sigma,\infty,l^{p}}(\F',\Phi,\rho')\]
\[\underline{\opdim}_{\Sigma,\infty,l^{p}}(\F,\Phi,\rho)=\underline{\opdim}_{\Sigma,\infty,l^{p}}(\F',\Phi,\rho').\]
\end{cor}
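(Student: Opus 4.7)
The plan is to synthesize the three preceding lemmas. First, by Lemma \ref{L:diffprodnorm}(b), for any fixed filtration the value of $\opdim_{\Sigma,\infty,l^p}(\F',\Phi,\,\cdot\,)$ is the same whether one inputs the pseudonorm $\rho_{\F',i}$ or $\rho'_{\F',i}$, so it suffices to prove the identity
\[
\opdim_{\Sigma,\infty,l^p}(\F,\Phi,\rho_{\F,i})\;=\;\opdim_{\Sigma,\infty,l^p}(\F',\Phi,\rho_{\F',i})
\]
for a single product norm $\rho$; the extension to arbitrary $\rho,\rho'$ follows by one more application of Lemma \ref{L:diffprodnorm}(b) at the end (say with filtration $\F'$). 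The same reduction works for $\underline{\opdim}$.

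Next, I invoke the observation recorded in the paragraph preceding the corollary: if the sequence of pseudonorms on $B(W,l^p(d_i))$ is held fixed, then $\opdim_{\Sigma,\infty,l^p}(\,\cdot\,,\Phi,\,\cdot\,)$ does not depend on the dynamical filtration used to form $\Hom_{\R,l^p}(\cdot,\ldots)$. This is exactly what Lemmas \ref{L:makeinv}, \ref{L:diffprodnorm}, and \ref{L:samepseudo} establish. Applied with pseudonorm $\rho_{\F',i}$, this yields
\[
\opdim_{\Sigma,\infty,l^p}(\F,\Phi,\rho_{\F',i})\;=\;\opdim_{\Sigma,\infty,l^p}(\F',\Phi,\rho_{\F',i}).
\]

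Now I compare the two pseudonorms while keeping the filtration parameter equal to $\F$. By Lemma \ref{L:weaker}(b), $\rho_{\F',i}\preceq_{\F,\Sigma}\rho_{\F,i}$, and then Lemma \ref{L:weaker}(a) gives
\[
\opdim_{\Sigma,\infty,l^p}(\F,\Phi,\rho_{\F',i})\;\leq\;\opdim_{\Sigma,\infty,l^p}(\F,\Phi,\rho_{\F,i}).
\]
Chaining the last two displays produces
\[
\opdim_{\Sigma,\infty,l^p}(\F',\Phi,\rho_{\F',i})\;\leq\;\opdim_{\Sigma,\infty,l^p}(\F,\Phi,\rho_{\F,i}),
\]
and exchanging the roles of $\F$ and $\F'$ yields the reverse inequality, so equality follows. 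The argument transfers verbatim to $\underline{\opdim}$, since both parts of Lemmas \ref{L:diffprodnorm} and \ref{L:weaker} are stated for the $\liminf$ version as well.

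The main obstacle --- comparing $\opdim$ across different filtrations \emph{and} different pseudonorms simultaneously --- has been split into two cleaner moves: ``switch filtration with pseudonorm fixed'' is accomplished by the explicit fix-up map $L_i$ of Lemma \ref{L:makeinv} (together with Lemma \ref{L:samepseudo}), and ``switch pseudonorm with filtration fixed'' is the content of Lemma \ref{L:weaker}. The corollary is then obtained by chaining these two moves and folding in the choice of product norm via Lemma \ref{L:diffprodnorm}(b).
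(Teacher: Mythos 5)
Your proof is correct and is essentially the paper's own argument: the paper's proof of this corollary is just the one-line instruction to ``combine Lemmas \ref{L:diffprodnorm}--\ref{L:weaker} and use symmetry,'' and your write-up spells out exactly that combination --- switch the product norm with Lemma \ref{L:diffprodnorm}(b), switch the filtration at fixed pseudonorm via the observation preceding the corollary (Lemmas \ref{L:makeinv}, \ref{L:diffprodnorm}, \ref{L:samepseudo}), switch the pseudonorm at fixed filtration via Lemma \ref{L:weaker}, and close by symmetry. The chain of inequalities is in the right direction, so nothing further is needed.
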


\begin{proof} If we combine Lemmas $\ref{L:diffprodnorm}$-$\ref{L:weaker}$ we obtain 
\[\opdim_{\Sigma,\infty,l^{p}}(\F,\Phi,\rho)\leq \opdim_{\Sigma,\infty,l^{p}}(\F',\Phi,\rho')\]
the result follows by symmetry.

\end{proof}

Because of the above corollary, we may define
\[\dim_{\Sigma,l^{p}}(q,\Phi)=\opdim_{\Sigma,\infty,l^{p}}(\F,\Phi,\rho),\]
\[\underline{\dim}_{\Sigma,l^{p}}(q,\Phi)=\underline{\opdim}_{\Sigma,\infty,l^{p}}(\F,\Phi,\rho),\]
where $\F,\rho$ are as in the statement of the corollary. 
\begin{lemma}\label{L:limit} Let $((X,\mu),\R,\Phi,V,\Sigma)$ be a dimension tuple, and let $\rho$ be a product norm. Let $S$ be a dynamically generating sequence in $V.$  Then
\[f.\dim_{\Sigma,l^{p}}(S,\Phi,\rho)=\sup_{\varepsilon>0}\liminf_{(F,m,\delta)}\limsup_{i\to \infty}f.\dim_{\Sigma,l^{p}}(S,F,m,\delta,\varepsilon,\Phi,\rho),\]
\[\underline{f.\dim}_{\Sigma,l^{p}}(S,\Phi,\rho)=\sup_{\varepsilon>0}\limsup_{(F,m,\delta)}\liminf_{i\to \infty}f.\dim_{\Sigma,l^{p}}(S,F,m,\delta,\varepsilon,\Phi,\rho).\]
\end{lemma}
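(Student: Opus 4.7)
The plan is to identify both equations as direct consequences of the monotonicity of $\Hom_{\R,l^{p}}(S,F,m,\delta,\sigma_{i})$ in $(F,m,\delta)$. First I would direct the triples by declaring $(F,m,\delta)\leq (F',m',\delta')$ iff $F\subseteq F'$, $m\leq m'$, and $\delta\geq \delta'$; this is a genuine directed set since $(F_{1}\cup F_{2},\max(m_{1},m_{2}),\min(\delta_{1},\delta_{2}))$ dominates both $(F_{j},m_{j},\delta_{j})$ for $j=1,2$.

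Next I would verify the key containment: every linear map satisfying the constraints defining $\Hom_{\R,l^{p}}(S,F',m',\delta',\sigma_{i})$ automatically satisfies the weaker constraints defining $\Hom_{\R,l^{p}}(S,F,m,\delta,\sigma_{i})$ whenever $(F,m,\delta)\leq (F',m',\delta')$, so that $\Hom_{\R,l^{p}}(S,F',m',\delta',\sigma_{i})\subseteq \Hom_{\R,l^{p}}(S,F,m,\delta,\sigma_{i})$. Combined with the facts that $\alpha_{S}$ is a fixed linear map and $d_{\varepsilon}(\cdot,\rho_{p,d_{i}})$ is monotone in its set argument, this shows that
\[
\tfrac{1}{d_{i}}d_{\varepsilon}(\alpha_{S}(\Hom_{\R,l^{p}}(S,F,m,\delta,\sigma_{i})),\rho_{p,d_{i}})
\]
is a monotone decreasing function of $(F,m,\delta)$ for each fixed $i$ and $\varepsilon$. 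Taking either $\limsup_{i}$ or $\liminf_{i}$ preserves the pointwise inequality in $(F,m,\delta)$, so both $f.\dim_{\Sigma,l^{p}}(S,F,m,\delta,\varepsilon,\Phi,\rho)$ and its $\liminf_{i}$-variant are monotone decreasing nets in $(F,m,\delta)$.

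For any monotone decreasing net $(x_{\alpha})$ on a directed set, a short cofinality argument gives $\inf_{\beta\geq\alpha}x_{\beta}=\inf_{\beta}x_{\beta}$ and $\sup_{\beta\geq\alpha}x_{\beta}=x_{\alpha}$, hence $\liminf_{\alpha}x_{\alpha}=\limsup_{\alpha}x_{\alpha}=\inf_{\alpha}x_{\alpha}$. Applying this to the monotone decreasing nets above yields
\[
\liminf_{(F,m,\delta)}f.\dim_{\Sigma,l^{p}}(S,F,m,\delta,\varepsilon,\Phi,\rho)=\inf_{(F,m,\delta)}f.\dim_{\Sigma,l^{p}}(S,F,m,\delta,\varepsilon,\Phi,\rho)=f.\dim_{\Sigma,l^{p}}(S,\varepsilon,\Phi,\rho),
\]
and analogously $\limsup_{(F,m,\delta)}\underline{f.\dim}_{\Sigma,l^{p}}(S,F,m,\delta,\varepsilon,\Phi,\rho)=\underline{f.\dim}_{\Sigma,l^{p}}(S,\varepsilon,\Phi,\rho)$. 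Taking $\sup_{\varepsilon>0}$ of both sides recovers each asserted equality.

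There is essentially no technical obstacle here: the lemma is a bookkeeping rephrasing of the definition using that a directed limit along a monotone net coincides with the infimum. The only subtlety is notational, namely that the inner expression ``$\limsup_{i\to\infty}f.\dim_{\Sigma,l^{p}}(S,F,m,\delta,\varepsilon,\Phi,\rho)$'' in the first equation is to be read as the $\limsup_{i}$ of the raw quantity $\frac{1}{d_{i}}d_{\varepsilon}(\alpha_{S}(\Hom_{\R,l^{p}}(S,F,m,\delta,\sigma_{i})),\rho_{p,d_{i}})$, and the corresponding ``$\liminf_{i\to\infty}$'' in the second equation is the $\liminf_{i}$ of this same raw quantity, so that the right-hand sides genuinely compute $f.\dim$ and $\underline{f.\dim}$ respectively.
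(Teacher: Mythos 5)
Your argument has a genuine gap at its central step: the claim that $\frac{1}{d_{i}}d_{\varepsilon}(\alpha_{S}(\Hom_{\R,l^{p}}(S,F,m,\delta,\sigma_{i})),\rho_{p,d_{i}})$ is exactly monotone decreasing in $(F,m,\delta)$ because ``$\alpha_{S}$ is a fixed linear map.'' It is not: by definition $\alpha_{S}$ acts on $B(V_{F,m},l^{p}(d_{i}))$ and is given by $\alpha_{S}(T)(j)=\chi_{\{j\leq m\}}(j)T(v_{j})$, so both its domain and its truncation level depend on $(F,m)$. For $(F,m,\delta)\leq(F',m',\delta')$ one only has a restriction map $\Hom_{\R,l^{p}}(S,F',m',\delta',\sigma_{i})\to\Hom_{\R,l^{p}}(S,F,m,\delta,\sigma_{i})$, and the two images under $\alpha_{S}$ differ in the coordinates $m<j\leq m'$; they are not nested subsets of $l^{\infty}(\NN,l^{p}(d_{i}))$, so the monotonicity of $d_{\varepsilon}$ in its set argument does not apply, and the ``monotone net implies $\liminf=\limsup=\inf$'' conclusion is unjustified. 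A telltale sign is that your proof never invokes the hypothesis that $\rho$ is a product norm.

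The paper's proof repairs exactly this point: one first chooses $k$ so that $\rho(f)<\varepsilon$ whenever $\|f\|_{\infty}\leq 1$ and $f$ is supported on $\{n:n\geq k\}$, and then restricts attention to triples with $m\geq k$, so that the discrepancy $\chi_{j\leq m}f(j)-\chi_{j\leq m'}f(j)$ is supported on $\{j>m\}\subseteq\{j\geq k\}$ and has small $\rho$-norm. This yields only the approximate monotonicity
\[d_{2\varepsilon}(\alpha_{S}(\Hom_{\R,l^{p}}(S,F',m',\delta',\sigma_{i})),\rho)\leq d_{\varepsilon}(\alpha_{S}(\Hom_{\R,l^{p}}(S,F,m,\delta,\sigma_{i})),\rho),\]
with a loss from $\varepsilon$ to $2\varepsilon$ that is then absorbed by the outer $\sup_{\varepsilon>0}$. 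Your argument would be correct if $\alpha_{S}$ were independent of $m$, but as the definitions stand you need this product-norm and $\varepsilon$-doubling step; without it the identification of the directed limit over $(F,m,\delta)$ with the infimum defining $f.\dim$ (respectively $\underline{f.\dim}$) does not follow.
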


\begin{proof} Let $S=(a_{j})_{j=1}^{\infty},$ and $C>0$ so that 
$\|a_{j}\|\leq C$
for all $j.$ 

 Fix $\varepsilon>0,$ and choose $k\in \NN$ so that $\rho(f)<\varepsilon$ if $\|f\|_{\infty}\leq 1$ and $f$ is supported on $\{n:n\geq k\}.$ Fix $F\subseteq \Phi$ finite, a natural number $m\geq k$ and $\delta>0.$ Then if $F'\supseteq F$ is a finite subset of $\Phi,$ $m'\geq m$ is a natural number and $0<\delta'<\delta,$ then $\Hom_{\R,l^{p}}(S,F',m',\delta',\sigma_{i})\subseteq \Hom_{\R,l^{p}}(S,F,m,\delta,\sigma_{i}).$ Further, for $f\in l^{\infty}(\NN,l^{p}(d_{i}))$ with $\|f\|_{l^{\infty}(\NN,l^{p}(d_{i}))}$ we have  
\[\rho(\chi_{j\leq m}f(j)-\chi_{j\leq m'}f(j))<\varepsilon.\]

	Thus
\[d_{2\varepsilon}(\alpha_{S}(\Hom_{\R,l^{p}}(S,F',m',\delta',\sigma_{i})),\rho)\leq d_{\varepsilon}(\alpha_{S}(\Hom_{\R,l^{p}}(S,F,m,\delta,\sigma_{i}),\rho).\]

	This implies that 
\[f.\dim_{\Sigma,l^{p}}(S,2\varepsilon,\Phi,\rho)\leq f.\dim_{\Sigma,l^{p}}(S,F,m,\delta,\varepsilon,\Phi,\rho).\]

Since $F,m,$ were arbitrarily large, $\delta>0$ was arbitrarily small we see that 
\[f.\dim_{\Sigma,l^{p}}(S,2\varepsilon,\Phi,\rho)\leq \liminf_{(F,m,\delta)}f.\dim_{\Sigma,l^{p}}(S,F,m,\delta,\varepsilon,\Phi,\rho)\]
and taking the supremum over $\varepsilon>0$ completes the proof.
\end{proof}

\begin{lemma} Let $((X,\mu),\R,\Phi,V,W,q,\Sigma)$ be a quotient dimension tuple. Let $S$ be a dynamically generating sequence in $V.$ Then for any product norm $\rho$ we have
\[\dim_{\Sigma,l^{p}}(q,\Phi,\rho)=f.\dim_{\Sigma,l^{p}}(S,\Phi,\rho),\]
\[\underline{\dim}_{\Sigma,l^{p}}(q,\Phi,\rho)=\underline{f.\dim}_{\Sigma,l^{p}}(S,\Phi,\rho).\]

\end{lemma}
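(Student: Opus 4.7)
The plan is to construct a specific $(q,\Phi)$-dynamical filtration $\F$ out of the dynamically generating sequence $S=(v_{j})_{j=1}^{\infty}$, and to establish a two-sided approximate correspondence between $\Hom_{\R,l^{p}}(\F,F,m,\delta,\sigma_{i})$ and $\Hom_{\R,l^{p}}(S,F,m,\delta,\sigma_{i})$ whose $\alpha_{\F}$- and $\alpha_{S}$-images agree on the first $m$ coordinates. Coupled with Lemma~\ref{L:diffprodnorm}(a), which identifies $\dim_{\Sigma,l^{p}}(q,\Phi,\rho)=\opdim_{\Sigma,\infty,l^{p}}(\F,\Phi,\rho_{\F,i})$ with an $f.\dim$ quantity measured against $\rho_{\F,i}$ (for any sufficiently large finite $M$), this reduces the problem to comparing two $f.\dim$'s that share the same measuring pseudonorm $\rho_{p,d_{i}}$.

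For the setup, use surjectivity of $q$ (open mapping theorem) to choose norm-controlled lifts $b_{\id,j}\in W$ of $v_{j}$, and for each word $\psi$ over $\Phi$ lifts $b_{\psi,j}$ of $\psi v_{j}$; define $W_{F,k}$ as the span of the relevant $b_{\psi,j}$ together with an inductively chosen increasing family in $\ker q$ whose union is dense there. This gives a valid $(q,\Phi)$-dynamical filtration $\F$ with $q(b_{\id,j})=v_{j}$. For the inequality $f.\dim_{\Sigma,l^{p}}(S,\Phi,\rho)\leq\dim_{\Sigma,l^{p}}(q,\Phi,\rho)$: given $T\in\Hom_{\R,l^{p}}(\F,F,m,\delta,\sigma_{i})$, define $T'\colon V_{F,m}\to l^{p}(d_{i})$ on a basis of $V_{F,m}$ by $T'(q(b_{\psi,j}))=T(b_{\psi,j})$ and extend linearly. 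The condition $\|T|_{\ker(q)\cap W_{F,m}}\|\leq\delta$, together with the finite-dimensional estimate used in the proof of Lemma~\ref{L:makeinv}, makes $T'$ well-defined up to a $\delta$-controlled error, places $T'\in\Hom_{\R,l^{p}}(S,F,m,\delta',\sigma_{i})$ for some $\delta'(\delta,F,m)\to 0$, and ensures $\alpha_{S}(T')(j)=T(b_{\id,j})=\alpha_{\F}(T)(j)$ for $j\leq m$, giving the desired $d_{\varepsilon}$-bound on the $\alpha_S$-image in terms of the $\alpha_\F$-image. For the reverse, given $T'\in\Hom_{\R,l^{p}}(S,F,m,\delta,\sigma_{i})$, use the $C$-bounded approximation property of $W$ to pick a finite-rank $\theta\colon W\to W_{F',m'}$ with $\|\theta\|\leq C$ and $\theta\to\id$ strongly (with $(F',m')$ dominating $(F,m)$), and set $T=T'\circ q|_{W_{F',m'}}\circ\theta$ (extending $T'$ by a bounded linear extension if needed). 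Then $\|T\|\leq C\|q\|$, $T$ inherits approximate equivariance on the $b_{\psi,j}$ and approximately kills $\ker q$, and $\alpha_{\F}(T)(j)\approx\alpha_{S}(T')(j)$ for $j\leq m$.

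The main obstacle is the careful bookkeeping of the cut-down sets and approximation errors, especially in the first direction: linear dependencies in $V_{F,m}$ that fail to lift to zero in $W_{F,m}$ are precisely the elements of $\ker(q)\cap W_{F,m}$, so the near-vanishing of $T$ there is exactly what allows $T'$ to be well-defined. The violation of the normalization $\|T\|\leq 1$ when passing from $T'$ to $T$ is absorbed by the identification $\opdim_{\Sigma,\infty,l^{p}}(\F,\Phi,\rho_{\F,i})=\opdim_{\Sigma,M,l^{p}}(\F,\Phi,\rho_{\F,i})$ (any $M$ large enough) furnished by Lemma~\ref{L:diffprodnorm}(a). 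The same argument with $\liminf$ replacing $\limsup$ throughout yields the companion identity $\underline{\dim}_{\Sigma,l^{p}}(q,\Phi,\rho)=\underline{f.\dim}_{\Sigma,l^{p}}(S,\Phi,\rho)$.
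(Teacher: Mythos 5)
Your proposal is correct and follows essentially the same route as the paper: one direction composes $T'\in\Hom_{\R,l^{p}}(S,\ldots)$ with $q$ and a finite-rank approximant $\theta_{F',m'}$ furnished by the bounded approximation property, and the other uses the near-vanishing of $T\in\Hom_{\R,l^{p}}(\F,\ldots)$ on $\ker(q)\cap W_{F',m'}$ to push $T$ down to a map on $V_{F,m}$ with controlled norm (the paper packages this second step as a citation of Lemma 3.8 of \cite{Me}, which you instead inline via the finite-dimensionality estimate). The only slip is that your two constructions are attached to the wrong inequalities: the map $\Hom_{\R,l^{p}}(\F,\ldots)\to\Hom_{\R,l^{p}}(S,\ldots)$ with matching $\alpha$-images bounds $d_{\varepsilon}(\alpha_{\F}(\cdot))$ by $d_{\varepsilon}(\alpha_{S}(\cdot))$ and hence proves $\dim_{\Sigma,l^{p}}(q,\Phi,\rho)\leq f.\dim_{\Sigma,l^{p}}(S,\Phi,\rho)$ rather than the reverse, and symmetrically for the composition $T'\circ q\circ\theta$ --- but since you carry out both constructions, both inequalities are in fact established once the labels are swapped.
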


\begin{proof} 
Let $S=(a_{j})_{j=1}^{\infty},$ and let $\F=((b_{\phi,j}),(W_{E,l}))$ be a $(q,\Phi)$-dynamical filtration such that $q(b_{\id,j})=a_{j}.$ Let $C>0$ be such that 
\[\|a_{j}\|\leq C,\]
\[\|b_{\phi,j}\|\leq C,\]
\[\|q\|\leq C,\]
\[\mbox{ for every $v\in V,$ there is a $w\in W$ such that $q(w)=v,$ and $\|w\|\leq C\|v\|,$}\]
\[\mbox{ $W$ has the $C$-bounded approximation property}.\]

	Let $\theta_{F,k}\colon W\to W_{F,k}$ be such that $\|\theta_{F,k}\|\leq C$ and 
\[\lim_{(F,k)}\|\theta_{F,k}(w)-w\|=0\mbox{ for all $w\in W$.}\]

	We first show that 
\[f.\dim_{\Sigma,l^{p}}(\F,\Phi,\rho)\geq f.\dim_{\Sigma}(S,\Phi,\rho).\]

	Fix $\varepsilon>0,$ and choose $k\in \NN,$ so that $\rho(f)<\varepsilon$ if $\|f\|_{\infty}\leq 1$ and $f$ is supported on $\{n:n\geq k\}.$ Choose $\kappa>0$ so that $\rho(f)<\varepsilon$ if $\|f\|_{\infty}\leq \kappa.$ Let $\id\in E\subseteq \Phi$ finite and $l\in \NN,$ so that if $F\supseteq E,m\geq l$ then
\[\|\theta_{F,m}(b_{\id,j})-b_{\id,j}\|\leq \kappa\]
for all $1\leq j\leq k.$ 

	Fix $E\subseteq F\subseteq \Phi$ finite $l\leq m\in \NN,$ and $\delta>0.$ We claim that we can find a $F'\subseteq \Phi$ finite, and $m'\in \NN$ and $\delta'>0$ so that
\[\Hom_{\R,l^{p}}(S,F',m',\delta',\sigma_{i})\circ q\big|_{W_{F',m'}}\circ \theta_{F',m'}\subseteq \Hom_{\R,l^{p}}(\F,F,m,\delta,\sigma_{i})_{C^{2}}.\]

	If $T\in \Hom_{\R,l^{p}}(S,F',m',\delta',\sigma_{i}),$ $B\subseteq\{1,\cdots,d_{i}\}$ is as in the definition of $\Hom_{\R,l^{p}}(S,F',m',\delta',\sigma_{i}),$ if $1\leq j\leq m$ and $\phi_{1},\cdots,\phi_{m}\in F$ then
\begin{align*}
&\|T\circ q\circ \theta_{F',m'}(b_{\phi_{1}\cdots \phi_{m},j})-\sigma_{i}(\phi_{1})\cdots \sigma_{i}(\phi_{m})T(q(\theta_{F',m'}(b_{\id,j}))\|_{l^{p}(B)}\\
&\leq C\|\theta_{F',m'}(b_{\phi_{1}\cdots \phi_{m},j})-b_{\phi_{1}\cdots\phi_{m},j})\|_{l^{p}(B)}+C\|\theta_{F',m'}(b_{\id,j})-b_{\id,j}\|_{l^{p}(B)}+\delta'.
\end{align*}

	For $w\in \ker(q)\cap W_{F,m}$ we have
\[\|T\circ q\circ \theta_{F',m'}(w)\|\leq C\|\theta_{F',m'}(w)-w\|,\]
so it suffices to choose $\delta'<\delta$ and then $F',m'$ large so that for all $1\leq j\leq m,\psi\in F^{m},$
\[C\|\theta_{F',m'}(b_{\psi,j})-b_{\psi,j})\|+C\|\theta_{F',m'}(b_{\id,j})-b_{\id,j}\|,\]
\[\|\theta_{F',m'}\big|_{W_{F,m}}-W_{F,m}\|<\frac{\delta}{C}.\]

	Suppose that $F',m',\delta'$ are so chosen, and that $m'\geq k.$ If $T\in \Hom_{\R,l^{p}}(S,F',m',\delta),$ then
\begin{align*}
\rho(\alpha_{\F}(T\circ q\circ \theta_{F',m'})-\alpha_{S}(T))&\leq (C^{2}+1)\varepsilon+\rho(\chi_{j\leq k}\|T\circ q\circ \theta_{F',m
'}(b_{\id,j})-T\circ q(b_{\id,j})\|)\\
&\leq (C^{2}+C+1)\varepsilon.
\end{align*}

Thus
\[f.\dim_{\Sigma,l^{p}}(S,F,m,\delta,(C^{2}+C+2)\varepsilon,\Phi,\rho)\leq \opdim_{\Sigma},l^{p}(\F,F',m',\delta',\varepsilon,\rho)_{C},\]
since $F',m'$ were arbitrarily large and $\delta'$ arbitrarily small we have
\[f.\dim_{\Sigma,l^{p}}(S,F,m,\delta,\varepsilon,\Phi,\rho)\leq \opdim_{\Sigma}(\F,\varepsilon,\rho)_{C},\]
taking the limit supremum over $(F,m,\delta)$ and then the supremum over $\varepsilon>0$ we find that 
\[f.\dim_{\Sigma,l^{p}}(S,\Phi,\rho)\leq \dim_{\Sigma,l^{p}}(q,\Phi,\rho).\]

	For the opposite inequality, let $1>\varepsilon>0,$ and let $k,E,l,\kappa$ be as in the first half of the proof. Fix $E\subseteq F\subseteq \Phi$ finite, $m\geq \max(k,l)$ and $0<\delta<\kappa.$

	By Lemma 3.8 in \cite{Me}, choose a $0<\delta''<\delta$ a $F\subseteq F'\subseteq \Phi$ finite, a $m\leq m'\in \NN$ so that if  $E$ is Banach space and 
\[T\colon W_{F',m'}\to E\]
is a contraction with
\[\|T\big|_{\ker(q)\cap W_{F',m'}}\|\leq \delta''\]
then there is a linear map $A\colon V_{F,m}\to E$ with $\|A\|\leq 2C$ and
\[\|T(b_{\psi,j})-A(\psi a_{j})\|<\delta\mbox{ for all $1\leq j\leq m,$ and $\psi\in F^{m}$}\]

	Let $F',m'$ be as above and $T\in \Hom_{\R,l^{p}}(\F,F',m',\delta',\sigma_{i})$ and chose $S$ as in the preceding paragraph. Let $B\subseteq \{1,\cdots,d_{i}\}$ be as in the definition for $\Hom_{\R,l^{p}}(\F,F',m',\delta',\sigma_{i}).$  Then for all $1\leq j\leq m$ and $\phi_{1},\cdots,\phi_{m}\in F$ we have
\begin{align*}
\|A(\phi_{1}\cdots \phi_{m}a_{j})-\sigma_{i}(\phi)\cdots \sigma_{i}(\phi_{m})A(a_{j})\|_{l^{p}(B)}&\leq 2\delta+\|T(b_{\phi_{1}\cdots \phi_{m},j})-\sigma_{i}(\phi)\cdots \sigma_{i}(\phi_{m}T(b_{\id,j})\|_{l^{p}(B)}\\
&\leq 2\delta+\delta'\\
&<3\delta.
\end{align*}
Further

\[\rho(\alpha_{S}(A)-\alpha_{\F}(T))\leq (2C^{2}+C)\varepsilon+\rho(\chi_{j\leq k}\|A(a_{j})-T(b_{\id,j})\|)\leq (2C^{2}+C+1)\varepsilon.\]

	Thus
\[f.\dim_{\Sigma,l^{p}}(\F,(2C^{2}+C+2)\varepsilon,\Phi,\rho)\leq f.\dim_{\Sigma,l^{p}}(S,F,m,3\delta,\varepsilon,\Phi,\rho)\]
so taking a limit infimum over $(F,m,\delta)$ and then a supremum over $\varespilon$ completes the proof.

\end{proof}

Putting together all our Lemmas imply that  we can set
\[\dim_{\Sigma,l^{p}}(V,\R)=\dim_{\Sigma,l^{p}}(S,\Phi,\rho),\]
\[\underline{\dim}_{\Sigma,l^{p}}(V,\R)=\underline{\dim}_{\Sigma,l^{p}}(S,\Phi,\rho),\]
and this is independent of our choice of $S,\Phi,\rho.$

\section{Properties of Extended von Neumann Dimension}

\begin{definition}\emph{Let $(\R,X,\mu)$ be as above and $V$ a Banach space representation of $\R.$ If $v\in V,$ then since $(X,\mu)$ is standard there is a unique (up to measure zero) set $A$ such that $\id_{A}v=v$ and $\id_{A^{c}}v=0.$ We call $A$ the support of $v,$ and denote it by $\supp v.$}\end{definition}

	The following inequality is frequently useful, and will be used to great extent in Section \ref{S:cohom}.

\begin{proposition} Let $((X,\mu),\R,V,\Phi,\Sigma)$ be a dimension tuple. Let $S=(a_{j})_{j=1}^{\infty}$ be a dynamically generating sequence in $V,$ then for any sofic approximation $\Sigma,$ and $1\leq p<\infty,$
\[\dim_{\Sigma,l^{p}}(V,\R)\leq \sum_{j=1}^{\infty}\mu(\supp a_{j}).\]
\end{proposition}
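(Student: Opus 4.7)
The plan is to exploit the invariance of $\dim_{\Sigma,l^{p}}(V,\R)$ established in the preceding lemmas, allowing us to work with $f.\dim_{\Sigma,l^{p}}(S,\Phi,\rho)$ after enlarging $\Phi$. Since the projection component of a graphing may be enlarged without violating any hypothesis of a dimension tuple, I may assume that $\Phi$ contains the projections $p_{j}:=\id_{\supp a_{j}}\in L^{\infty}(X,\mu)\subseteq L(\R)$ for every $j\geq 1$. The decisive observation is that $p_{j}a_{j}=a_{j}$, so any approximately equivariant $T\in \Hom_{\R,l^{p}}(S,F,m,\delta,\sigma_{i})$ with $\{p_{1},\dots,p_{k}\}\subseteq F$ satisfies $\|T(a_{j})-\sigma_{i}(p_{j})T(a_{j})\|_{l^{p}(A)}<\delta$ for $1\leq j\leq k$ on the associated large set $A$. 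Writing $E_{j}^{(i)}:=\supp \sigma_{i}(p_{j})$, this says $T(a_{j})$ is approximately supported on $E_{j}^{(i)}$, and the trace condition $\tr\circ \sigma_{i}\to \tau$ gives $|E_{j}^{(i)}|/d_{i}\to \tau(p_{j})=\mu(\supp a_{j})$.

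Now fix $\varepsilon>0$ and set $C:=\sup_{j}\|a_{j}\|$. By the product-norm axiom, choose $k\in \NN$ so that $\rho(f)<\varepsilon$ whenever $\|f\|_{\infty}\leq C$ and $f$ is supported on $\{n>k\}$. Take $F\supseteq \{p_{1},\dots,p_{k},\id\}$, $m\geq k$, and $\delta>0$ to be chosen. Define the finite-dimensional subspace
\[L_{i}:=\bigl\{f\in l^{\infty}(\NN,l^{p}(d_{i})):f(j)=0\text{ for }j>k,\ \supp f(j)\subseteq E_{j}^{(i)}\text{ for }j\leq k\bigr\},\]
so $\dim L_{i}=\sum_{j=1}^{k}|E_{j}^{(i)}|$. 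For $T\in \Hom_{\R,l^{p}}(S,F,m,\delta,\sigma_{i})$ with associated set $A$, set $g(j):=\chi_{E_{j}^{(i)}}T(a_{j})$ for $j\leq k$ and $g(j):=0$ otherwise; clearly $g\in L_{i}$. For $j\leq k$ the approximate equivariance controls the head, $\|\chi_{A}(\alpha_{S}(T)(j)-g(j))\|_{l^{p}}<\delta$, while for $j>k$ the trivial bound $\|\alpha_{S}(T)(j)\|_{l^{p}}\leq C$ combined with our choice of $k$ absorbs the tail into $\varepsilon$. Choosing $\delta$ sufficiently small yields $\rho_{p,d_{i}}(\chi_{A}(\alpha_{S}(T)-g))<2\varepsilon$, so $\alpha_{S}(\Hom_{\R,l^{p}}(S,F,m,\delta,\sigma_{i}))$ is $(2\varepsilon)$-contained in $L_{i}$.

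This yields
\[f.\dim_{\Sigma,l^{p}}(S,F,m,\delta,2\varepsilon,\Phi,\rho)\leq \limsup_{i\to \infty}\frac{\dim L_{i}}{d_{i}}=\sum_{j=1}^{k}\mu(\supp a_{j})\leq \sum_{j=1}^{\infty}\mu(\supp a_{j}).\]
Taking the infimum over $(F,m,\delta)$ and then the supremum over $\varepsilon>0$ (observing that the right-hand bound is independent of $\varepsilon$) and invoking $\dim_{\Sigma,l^{p}}(V,\R)=f.\dim_{\Sigma,l^{p}}(S,\Phi,\rho)$ completes the argument. The principal technical point is balancing the two error sources, the $\delta$-error on the head $j\leq k$ and the product-norm tail on $j>k$, so that both fit into a common $\varepsilon$-budget; the product-norm axiom (the topology of pointwise convergence on the $\|\cdot\|_{\infty}$ unit ball) is precisely what makes this balancing possible.
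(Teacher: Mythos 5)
Your proof is correct and follows essentially the same route as the paper: both arguments show that $T(a_{j})$ is, up to a small error on a large set, contained in the range of $\sigma_{i}(\id_{\supp a_{j}})$, whose relative dimension $\tr(\sigma_{i}(\id_{\supp a_{j}}))$ tends to $\mu(\supp a_{j})$, and then sum over $j$ with the product norm absorbing the tail $j>k$. The only cosmetic difference is that the paper keeps $\Phi$ fixed and approximates $\id_{\supp a_{j}}$ by a projection $\id_{B_{j}}\in F^{m}$ with $\mu(B_{j}\Delta \supp a_{j})<\delta$, whereas you adjoin the projections $\id_{\supp a_{j}}$ to $\Phi$ outright and invoke the already-established independence of the dimension from the choice of $\Phi$.
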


\begin{proof} Let $A_{j}=\supp a_{j}.$  Fix $\varepsilon>0,$ let $F\subseteq \Phi$ be finite, $m\in \NN,\delta>0,$ if  $F$ is sufficiently large, then there is a $B_{j}\subseteq X$ measurable with $\id_{B_{j}}\in F^{m}$ so that 
\[\|\id_{B_{j}}a_{j}-a_{j}\|<\varepsilon,\]
\[\mu(B_{j}\Delta A_{j})<\delta.\]

	Thus for all large $i,$  and for all $T\in \Hom_{\R,l^{p}}(S,F,m,\delta,\sigma_{i})$ we can find a set $C\subseteq \{1,\cdots,d_{i}\}$ with $|C|\geq(1- 2\delta(1+m))d_{i}$ so that 
\[\|T(a_{j})-\sigma_{i}(\id_{A_{j}})T(a_{j})\|_{l^{p}(C)}<\delta.\]
	So if $\delta$ is sufficiently small (depending only upon $\varepsilon,m$) we have shown that 
\[(T(a_{1}),\cdots,T(a_{m}))\subseteq_{\varepsilon} \bigoplus_{j=1}^{n}\sigma_{i}(\id_{A_{j}})(l^{p}(d_{i})),\]
so for all large $i,$ 
\[\frac{1}{d_{i}}d_{\varepsilon}(\Hom_{\R,l^{p}}(S,F,m,\delta,\sigma_{i}))\leq \frac{1}{d_{i}}\sum_{j=1}^{n}\Tr(\sigma_{i}(\id_{A_{j}}))\to \sum_{j=1}^{m}\mu(A_{j}).\]

	Thus
\[f.\dim_{\Sigma}(S,F,m,\varepsilon,\delta,\sigma_{i})\leq \sum_{j=1}^{\infty}\mu(A_{j}),\]
since the above is true for all $F,m,$ sufficiently large and $\delta$ sufficiently small (depending only on $\varepsilon$) the proof is complete. 

\end{proof}

\begin{proposition} Let $((X,\mu),\R,\Phi,V,\Sigma)$ be a dimensional tuple, and let $W$ be another representation of $\R.$ If $T\colon W\to V$ is a bounded equivariant map with dense image, then
\[\dim_{\Sigma,l^{p}}(V,\R)\leq \dim_{\Sigma,l^{p}}(W,\R).\]
\end{proposition}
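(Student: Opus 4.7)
\emph{Proof proposal.} The strategy is to pull back test maps for $V$ to test maps for $W$ via precomposition with $T$, and to appeal to the already-established independence of $\dim_{\Sigma,l^{p}}(\cdot,\R)$ from the choice of generating sequence, graphing, and product norm.

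After rescaling, we may assume $\|T\|\le 1$. Fix a bounded dynamically generating sequence $S_{W}=(w_{j})_{j=1}^{\infty}$ in $W$, and set $S_{V}=(Tw_{j})_{j=1}^{\infty}$. Then $S_{V}$ is bounded, and since $T$ is equivariant with dense image,
\[\overline{\Span\{\phi Tw_{j}:\phi\in [[\R]],\,j\in \NN\}}=\overline{T\bigl(\Span\{\phi w_{j}:\phi\in [[\R]],\,j\in \NN\}\bigr)}\supseteq \overline{T(W)}=V,\]
so $S_{V}$ is dynamically generating in $V$. By the invariance results above we may compute both $\dim_{\Sigma,l^{p}}(V,\R)$ and $\dim_{\Sigma,l^{p}}(W,\R)$ using the common graphing $\Phi$ and a common product norm $\rho$.

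Given $\psi\in \Hom_{\R,l^{p}}(S_{V},F,m,\delta,\sigma_{i})$, set $\tilde\psi=\psi\circ T$, viewed as a linear map on the span of $\{\phi w_{j}:\phi\in (F\cup\{\id\}\cup F^{*})^{m},\,1\le j\le m\}$. Equivariance of $T$ carries this subspace into the analogous subspace of $V$, so $\tilde\psi$ takes values in $l^{p}(d_{i})$ and $\|\tilde\psi\|\le \|\psi\|\|T\|\le 1$. Using $T(\phi_{1}\cdots\phi_{m}w_{j})=\phi_{1}\cdots\phi_{m}Tw_{j}$, the almost-equivariance inequality for $\psi$ on a set $A\subseteq\{1,\dots,d_{i}\}$ with $|A|\geq (1-\delta)d_{i}$ transfers verbatim to $\tilde\psi$, so $\tilde\psi\in \Hom_{\R,l^{p}}(S_{W},F,m,\delta,\sigma_{i})$. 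Moreover
\[\alpha_{S_{V}}(\psi)(j)=\psi(Tw_{j})=\tilde\psi(w_{j})=\alpha_{S_{W}}(\tilde\psi)(j),\]
so the inclusion
\[\alpha_{S_{V}}(\Hom_{\R,l^{p}}(S_{V},F,m,\delta,\sigma_{i}))\subseteq \alpha_{S_{W}}(\Hom_{\R,l^{p}}(S_{W},F,m,\delta,\sigma_{i}))\]
holds exactly.

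Consequently $d_{\varepsilon}(\alpha_{S_{V}}(\Hom_{\R,l^{p}}(S_{V},F,m,\delta,\sigma_{i})),\rho_{p,d_{i}})\le d_{\varepsilon}(\alpha_{S_{W}}(\Hom_{\R,l^{p}}(S_{W},F,m,\delta,\sigma_{i})),\rho_{p,d_{i}})$ for every $\varepsilon>0$ and every admissible $(F,m,\delta)$. Dividing by $d_{i}$, taking $\limsup_{i\to\infty}$, then the infimum over finite $F\subseteq \Phi$, $m\in\NN$, $\delta>0$, and then the supremum over $\varepsilon>0$ yields $f.\dim_{\Sigma,l^{p}}(S_{V},\Phi,\rho)\le f.\dim_{\Sigma,l^{p}}(S_{W},\Phi,\rho)$, which is the claim. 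The only subtle point is the initial normalization $\|T\|\le 1$, needed so that precomposition by $T$ preserves the unit-ball constraint on test maps; beyond that the inclusion of $\Hom$-sets is exact, no approximation argument is required, and all of the genuine analytic content is absorbed into the previously proved independence lemmas.
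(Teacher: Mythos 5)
Your proposal is correct and follows essentially the same route as the paper: push the generating sequence of $W$ forward through $T$ to get a generating sequence of $V$ (using the dense image), observe that precomposition with $T$ sends $\Hom_{\R,l^{p}}(T\circ S,F,m,\delta,\sigma_{i})$ into $\Hom_{\R,l^{p}}(S,F,m,\delta,\sigma_{i})$ with $\alpha_{S}(\psi\circ T)=\alpha_{T\circ S}(\psi)$, and conclude by comparing $\varepsilon$-dimensions. Your explicit normalization $\|T\|\le 1$ is a small point the paper leaves implicit, but it does not change the argument.
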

\begin{proof} If if $S$ is a dynamically generating sequence in $W,$ then $T\circ S$ is a dynamically generating for $W.$ If $\phi \in \Hom_{\R,l^{p}}(T\circ S,F,m,\delta,\sigma_{i}),$ then $\phi\circ T\in \Hom_{\R,l^{p}}(S,F,m,\delta,\sigma_{i})$ and
\[\alpha_{S}(\phi\circ T)=\alpha_{T\circ S}(\phi)\]
we are done.

\end{proof}

	We also handle how dimension behaves under compressions. This implies in particular that dimension is in fact invariant under \emph{weak isomorphism}.

\begin{proposition}\label{P:compress} Fix $1\leq p<\infty.$ Let $((X,\mu),\R,\Phi,V,\Sigma)$ be a dimensional tuple with $\R$ ergodic and $(X,\mu)$ diffuse. For a measurable $A\subseteq X,$ let $\Sigma_{A}$ be defined by $\sigma_{A,i}(\phi)=\sigma_{i}(\id_{A})\sigma_{i}(\phi)\sigma_{i}(\id_{A}).$ Then
\[\mu(A)\dim_{\Sigma,l^{p}}(\id_{A}V,\R_{A})=\dim_{\Sigma,l^{p}}(V,\R)\]
\[\mu(A)\underline{\dim}_{\Sigma,l^{p}}(\id_{A}V,\R_{A})=\underline{\dim}_{\Sigma,l^{p}}(V,\R)\]
\end{proposition}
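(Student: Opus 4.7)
The key observation is that $d_i^A := \Tr(\sigma_i(\id_A))$ satisfies $d_i^A / d_i \to \mu(A)$ by the trace axiom for sofic approximations, so normalizing the $\R_A$-side covering numbers by $d_i^A$ rather than $d_i$ is what produces the factor $\mu(A)$. My plan is to build an explicit two-sided correspondence between $\Hom_{\R,l^{p}}(S,F,m,\delta,\sigma_{i})$ and $\Hom_{\R_A,l^{p}}(S',F',m',\delta',\sigma_{A,i})$ for compatibly chosen dynamically generating sequences $S,S'$, transport covering numbers across, and divide by $d_i$ versus $d_i^A$.

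First, using ergodicity of $\R$ and diffuseness of $(X,\mu)$, fix partial $\R$-morphisms $\psi_n\in[[\R]]$ with disjoint domains $D_n$ partitioning $X$, ranges $\ran(\psi_n)\subseteq A$, and $\psi_1=\id_A$. Given a dynamically generating sequence $S=(a_j)$ for $V$ over $\R$, set $S'=(\psi_n a_j)_{n,j}\subseteq \id_A V$. I would verify $S'$ is dynamically generating for $\id_A V$ over $\R_A$ by taking $v\in\id_A V$, approximating it as $v=\lim\sum_k c_k\phi_k a_{j_k}$ on the $\R$-side, applying $\id_A$, and decomposing each term via $\id_A\phi_k a_{j_k}=\sum_n(\id_A\phi_k\psi_n^{-1})(\psi_n a_{j_k})$, where each $\id_A\phi_k\psi_n^{-1}\in[[\R_A]]$ since both its domain and range lie in $A$.

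Next, set up the Hom correspondence. Given $T\in\Hom_{\R,l^{p}}(S,F,m,\delta,\sigma_{i})$, define $T'(v):=\sigma_i(\id_A)T(v)$ for $v$ in the span of $\R_A$-translates of $S'$, viewed as a map into $\sigma_i(\id_A)l^p(d_i)\cong l^p(d_i^A)$. Using $\sigma_{A,i}(\phi)=\sigma_i(\id_A)\sigma_i(\phi)\sigma_i(\id_A)$ together with almost-multiplicativity of $\sigma_i$, check that $T'\in\Hom_{\R_A,l^{p}}(S',F'',m'',\delta'',\sigma_{A,i})$, where $F'',m'',\delta''$ absorb the extra $\psi_n$'s used in expressing $S'$ in terms of $S$. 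In the reverse direction, given $T'$ near-equivariant for $\R_A$ taking values in $\sigma_i(\id_A)l^p(d_i)$, extend to $T(v):=\sum_{n\leq N}\sigma_i(\psi_n^{-1})T'(\psi_n\id_{D_n}v)$ for a truncation $N$ chosen large enough that the omitted tail contributes negligibly; near-equivariance of $T$ for $\R$ follows from that of $T'$ via the decompositions $\phi v=\sum_n\phi\psi_n^{-1}\psi_n\id_{D_n}v$ and near-multiplicativity of $\sigma_i$.

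Both maps $T\mapsto T'$ and $T'\mapsto T$ are linear and, after composition with $\alpha_S$ and $\alpha_{S'}$, bi-Lipschitz between their images in the respective $l^\infty(\NN,l^p(\cdot))$-norms up to controllable error. Therefore any $k$-dimensional subspace $\varepsilon$-covering $\alpha_S(\Hom_{\R,l^p}(S,F,m,\delta,\sigma_i))$ yields a $k$-dimensional subspace $O(\varepsilon)$-covering $\alpha_{S'}(\Hom_{\R_A,l^p}(S',F'',m'',\delta'',\sigma_{A,i}))$ and conversely; dividing the estimate by $d_i$ on one side and by $d_i^A$ on the other introduces the factor $d_i^A/d_i\to\mu(A)$. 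Taking $\limsup_i$ (respectively $\liminf_i$), then $\inf_{F,m,\delta}$ and $\sup_\varepsilon$, yields the claimed identities for $\dim$ and $\underline{\dim}$. The main obstacle is that the decomposition $v=\sum_n\psi_n^{-1}\psi_n v$ is infinite while the Hom-space conditions only control finite pieces: the truncation $N$ has to be chosen after $F,m,\delta$ are fixed, and then $F''$ on the $\R_A$-side must be enlarged to include $\id_A\phi\psi_n^{-1}$ for every $\phi\in F$ and $n\leq N$, so the interleaving of quantifiers and the uniformity of the tail estimate across all $T\in\Hom$ are the delicate points.
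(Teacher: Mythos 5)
Your proposal is essentially the paper's argument: the same compression operator $T\mapsto \sigma_i(\id_A)T$, the same extension operator $T'\mapsto \sum_n\sigma_i(\psi_n)T'(\psi_n^{-1}\cdot)$, the same normalization by $\Tr(\sigma_i(\id_A))$ versus $d_i$, and the same transport of covering subspaces through these operators. Two small remarks. First, your worry about an infinite decomposition and a truncation $N$ is moot: since $\mu(A)>0$, ergodicity lets you take finitely many $\psi_n$ (at most $\lceil 1/\mu(A)\rceil$ of them, all but the last with domain of full measure $\mu(A)$), which is exactly what the paper does, so the quantifier interleaving you flag as delicate largely disappears. Second, the paper does not run the two-sided correspondence directly for arbitrary $A$; it proves the compression inequality $\dim_{\Sigma,l^{p}}(V,\R)\leq\mu(A)\dim_{\Sigma_A,l^{p}}(V_A,\R_A)$ only when $\mu(A)=1/n$, proves the extension inequality for general $A$, deduces equality for rational $\mu(A)$ by passing through a common subset of measure $1/q$, and handles irrational $\mu(A)$ by squeezing between rational-measure sets $A_n\subseteq A\subseteq B_n$. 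Your direct version of the compression direction for general $A$ appears to go through (the image subspace $\{(\sigma_i(\psi_k)\xi_j)_{k,j}:\xi\in W\}$ still has dimension at most $\dim W$ even when the last $\psi_k$ has smaller domain), so if you carry out the estimates carefully you would avoid the rational/irrational bootstrap; but you should either do that bookkeeping explicitly or fall back on the paper's case division, since as written your proposal asserts rather than verifies the uniformity of the two-sided bi-Lipschitz estimates for general $A$.
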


\begin{proof} We will handle the case of $\dim$ only. Let $S_{A}=(a_{j})_{j=1}^{\infty}$ be a dynamically generating sequence for $V_{A}.$ Find $\psi_{1},\cdots,\psi_{k}\in [[\R]]$ with $\psi_{1}=\id_{A},\dom(\psi_{j})= A$ for $1\leq j \leq n,\dom(\psi_{n})\subseteq A,$ and up to sets of measure zero,
\[X=\bigsqcup_{j=1}^{k}\ran(\psi_{j}).\]
	Set $A_{j}=\psi_{j}(A).$  Let $S$ be an enumeration of $(\psi_{k}a_{j})_{j,k}.$ 

	We will first prove that $\dim_{\Sigma,l^{p}}(V,\R)\leq \mu(A)\dim_{\Sigma_{A},l^{p}}(V_{A},\R_{A})$ when $\mu(A)=1/n.$

	It is easy to see that
\[\dim_{\Sigma_{A_{j}},l^{p}}(\id_{A_{j}}V,\R_{A_{j}})\]
is independent of $j.$ For $T\colon V\to l^{p}(d_{i}),$ let $T_{A_{j}}\colon V_{A_{j}}\to l^{p}(\sigma(\id_{A_{j}})(\{1,\cdots,d_{i}\}))$ be given by
\[T_{A_{j}}(x)=\sigma_{i}(\id_{A_{j}})T(x).\]

	Fix $\varepsilon'>0,$ and let $\varepsilon>0$ depend upon $\varepsilon'$ in a manner to be determined later. 

	Given $F\subseteq \Phi_{A},m\in \NN,\delta>0,$ there is a $F'\subseteq \Phi,m'\in \NN,\delta'>0$ so that $T\in \Hom_{\R,l^{p}}(S,F',m',\delta',\sigma_{i})$ implies $T_{A}\in \Hom_{\R,l^{p}}(S_{A},F,m,\delta,\sigma_{i,A}).$ If we choose $F',m',\delta'$ appropriately and 
\[\alpha_{S_{A}}(\Hom_{\R_{A},l^{p}}(S_{A},F,m,\delta,\sigma_{i,A}))\subseteq_{\varepsilon,\|\cdot\|_{p}}W,\]

	then
\[\alpha_{S}(\Hom_{\R,l^{p}}(S,F',m',\delta',\sigma_{i}))\subseteq_{\varepsilon',\|\cdot\|_{p}}\left\{\sum_{k=1}^{n}\sigma_{i}(\psi_{k})\xi:\xi \in W\right\}.\]

	Since 
\[\frac{\tr(\sigma_{i}(\id_{A})}{d_{i}}\to \frac{1}{n},\]
we find that 
\[\dim_{\Sigma,l^{p}}(V,\R)\leq \frac{1}{n}\dim_{\Sigma,l^{p}}(V_{A},\R_{A}).\]
\[\underline{\dim}_{\Sigma,l^{p}}(V,\R)\leq \frac{1}{n}\underline{\dim}_{\Sigma,l^{p}}(V_{A},\R_{A}).\]

	We now show that  $\dim_{\Sigma,l^{p}}(V,\R)\leq \mu(A)\dim_{\Sigma_{A},l^{p}}(V_{A},\R_{A})$ for general $A$ (not necessarily with $\mu(A)=1/n$).
	Fix $F\subseteq [[\R]]$ finite $r\in \NN,\delta>0.$ Fix $\kappa>0$ which will depend upon $\delta$ in a manner to be determined. Let
\[F'\supseteq\{\psi_{i}^{-1}\phi \psi_{q}:1\leq i,q\leq k,\phi\in F\}\]
	Fix $r'\in \NN,\delta'>0$ which will depend upon $r,\delta$ in a manner to be determined shortly. Suppose that $T\in \Hom_{\R_{A},l^{p}}(S_{A},F',r',\delta',\sigma_{i}),$
	define 
\[\widetidle{T}(x)=\sum_{j=1}^{k}\sigma_{i}(\psi_{i})T(\psi_{i}^{-1}x).\]

	Then
\[\|\widetidle{T}\|\leq M,\]
where $M>0$ is some constant.
	
	Choose $C\subseteq \{1,\cdots,d_{i}\}$ of cardinality at least $(1-\delta')d_{i}$ for $T$ as in the definition of $\Hom_{\R_{A},l^{p}}(S_{A},F',r',\delta',\sigma_{i}).$ It is easy to see that if $F',r'$ are sufficently large and $\delta'$ is sufficently small, then
\[\|T(\psi_{i}^{-1}\phi\psi_{q}\psi_{q}^{-1} a_{l})-\sigma_{i}(\psi_{i}^{-1}\phi\psi_{q}^{-1})T(\psi_{q}^{-1}a_{l})\|_{l^{p}(C)}<\kappa.\]

	We have 
\[\psi_{j}^{-1}\phi=\sum_{q=1}^{k}\psi_{j}^{-1}\phi\psi_{q}\psi_{q}^{-1},\]
hence
\[T(\psi_{j}^{-1}\phi a_{l})=\sum_{q=1}^{k}T(\psi_{j}^{-1}\phi \psi_{q}\psi_{q}^{-1}a_{l}),\]
so
\[\left\|T(\psi_{j}^{-1}\phi a_{l})-\sum_{q=1}^{k}\sigma_{i}(\psi_{j}^{-1}\phi \psi_{q})T(\psi_{q}^{-1}a_{l})\right\|_{l^{p}(C)}<\frac{\delta}{k},\]
if $\kappa>0$ is sufficiently small. Since
\[\sum_{i=1}^{k}\psi_{j}\psi_{j}^{-1}\phi \psi_{q}=\phi\psi_{q},\]
if $i$ is sufficiently large, then we can find a set $C'\subseteq \{1,\cdots,d_{i}\}$ of size at least $\delta'$ so that of $C'$ we have
\[\sum_{i=1}^{k}\sigma_{i}(\psi)\sigma_{i}(\psi_{j}^{-1}\phi\psi_{q})=\sigma_{i}(\phi)\sigma_{i}(\psi_{q}).\]
Then
\begin{align*}
&\left\|\widetidle{T}(\phi a_{l})-\sigma_{i}(\phi)T(a_{l})\right\|_{l^{p}(C\cap C')}
\\&=\left\|\sum_{i=1}^{k}\sigma_{i}(\psi_{j})T(\psi_{j}^{-1}\phi a_{l})-\sum_{i=1}^{k}\sigma_{i}(\phi)\sigma_{i}(\psi_{j})T(\psi_{j}^{-1}a_{l})\right\|_{l^{p}(C\cap C')}\\
&\leq \delta+\left\|\sum_{1\leq j,q\leq k}\sigma_{i}(\psi_{j})\sigma_{i}(\psi_{j}^{-1}\phi\psi_{q})T(\psi_{q}^{-1}a_{l})-\sum_{q=1}^{k}\sigma_{i}(\psi_{Q})\sigma_{i}(\psi_{q})T(\psi_{j}^{-1}a_{l})\right\|_{l^{p}(C\cap C')}\\
&=\delta.
\end{align*}

	Thus $\widetidle{T}\in \Hom_{\R,l^{p}}(S,F,r,\delta,\sigma_{i})_{M}.$ Further, since $\psi_{1}=\id_{A},$ 
\[\sum_{j=1}^{k}\id_{A}\psi_{j}\psi_{j}^{-1}=\id_{A},\]
so
\[\sigma_{i}(\id_{A})\widetidle{T}(a_{j})=\sum_{j=1}^{k}\sigma_{i}(\id_{A})\sigma_{i}(\psi_{j})T(\psi_{j}^{-1}a_{j}),\]
hence $\sigma_{i}(\id_{A})\widetilde{T}(a_{j})$ agrees with $T(a_{j})$ on a set of size at least $(1-\varepsilon)d_{i}$ if $i$ is sufficiently large.

	 So, if $W$ is a subspace of $l^{\infty}(\NN,l^{p}(d_{i}))$ which $\varepsilon$-contains $\alpha_{S}(\Hom_{\R,l^{p}}(S,F,r,\delta,\sigma_{i})),$ then $\sigma_{i}(\id_{A})(W)$ $2\varepsilon$-contains $\alpha_{S_{A}}(\Hom_{\R_{A},l^{p}}(S_{A},F',r',\delta',\sigma_{i})).$ This shows that 
\[\dim_{\Sigma,l^{p}}(\id_{A}V,R_{A})\leq \frac{1}{\mu(A)}\dim_{\Sigma,l^{p}}(V,R).\] 

	Note that this implies $\mu(A)\dim_{\Sigma_{A},l^{p}}(V_{A},\R_{A})=\dim_{\Sigma,l^{p}}(V,\R)$ when $\mu(A)$ is rational. If $\mu(A)$ is not rational, let $A_{n}\subseteq A\subseteq B_{n}$ with $A_{n}$ increasing, $B_{n}$ decreasing $\mu(A_{n}),\mu(B_{n})$ are rational and $\mu(A_{n}),\mu(B_{n})\to \mu(A).$ Then by considering compressions
\[\frac{1}{\mu(A_{n})}\dim_{\Sigma,l^{p}}(V,\R)=\dim_{\Sigma_{A_{n}},l^{p}}(V_{A_{n}},\R_{A_{n}})\leq \frac{\mu(A)}{\mu(A_{n})}\dim_{\Sigma_{A},l^{p}}(V_{A},\R_{A})\]
\[\frac{1}{\mu(B_{n})}\dim_{\Sigma,l^{p}}(V,\R)=\dim_{\Sigma_{B_{n}},l^{p}}(V_{B_{n}},\R_{B_{n}})\geq \frac{\mu(B_{n})}{\mu(A)}\dim_{\Sigma_{A},l^{p}}(V_{A},\R_{A}),\]
let $n\to\infty$ to complete the proof.

\end{proof}

	We now show that dimension is subadditive under exact sequences. Unfortunately, we cannot handle superadditivity even in the case of direct sums, not even in the case of Hilbert spaces. Unfortunately, the proof for superadditivity given in \cite{Me}, Theorem 4.7 does not carry over to our setting. The difficulty is in getting a bound analogous to \cite{Me} Lemma 4.3 for our different version of approximate dimension.

\begin{theorem}\label{T:subaddexact} Let $((X,\mu),\R,\Phi,V,\Sigma)$ be a dimensional tuple, and let $W\subseteq V$ be a closed $\R$-invariant subspace. Then  for every $1\leq p<\infty,$ we have the following inequalities:
\[\dim_{\Sigma,l^{p}}(V,\R)\leq \dim_{\Sigma,l^{p}}(V/W,\R)+\dim_{\Sigma,l^{p}}(W,\R),\]
\[\underline{\dim}_{\Sigma,l^{p}}(V,\R)\leq \dim_{\Sigma,l^{p}}(V/W,\R)+\underline{\dim}_{\Sigma,l^{p}}(W,\R),\]
\[\underline{\dim}_{\Sigma,l^{p}}(V,\R)\leq \underline{\dim}_{\Sigma,l^{p}}(V/W,\R)+\dim_{\Sigma,l^{p}}(W,\R).\]
\end{theorem}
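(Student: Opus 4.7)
All three inequalities will follow from a single construction: given an approximation hom for $V$, split it into an approximation hom for $W$ and one for $V/W$. The mixture of $\dim$ and $\underline{\dim}$ on the right-hand side of each inequality dictates whether the $W$- and $V/W$-pieces of the resulting cover must be good for all large $i$ or only for infinitely many $i$, and the combined cover is valid on the intersection of these index sets; that intersection is cofinite in the first case and infinite in the two mixed cases, exactly what is needed to bound the left-hand side.

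Let $\pi\colon V \to V/W$ be the quotient. Pick a dynamically generating sequence $S_W = (a_j)_{j=1}^{\infty}$ for $W$ and a sequence $(b_j)_{j=1}^{\infty} \subseteq V$ so that $S_{V/W} := (\pi(b_j))_{j=1}^{\infty}$ is dynamically generating for $V/W$; interleave to get $S = (a_1, b_1, a_2, b_2, \ldots)$, dynamically generating for $V$. Compute all three dimensions as $f.\dim_{\Sigma, l^{p}}$ against these sequences with a fixed product norm $\rho$. For $(F, m)$ chosen large, let $V_{F,m}^{b} := \Span\{\phi b_j : \phi \in \mathcal{W}_m(F),\ j \leq m\}$ and choose a linear section $s_{F, m}\colon \pi(V_{F,m}^{b}) \to V_{F,m}^{b}$ of $\pi$; such a section exists by finite-dimensionality, and its norm (which may depend on $(F, m)$) is finite for each fixed $(F, m)$. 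For $T \in \Hom_{\R,l^{p}}(S, F, m, \delta, \sigma_i)$, set $T_W := T|_W$ and define $T_{V/W}(\pi(v)) := T(s_{F,m}(\pi(v)))$ on $\pi(V_{F,m}^{b})$. Near-equivariance of $T$ shows that $T_W$ is an approximation hom for $W$ with parameters $(F_W, m_W, \delta_W)$ derived from $(F, m, \delta)$, and $T_{V/W}$ extends, via the bounded-approximation-property arguments of Lemma \ref{L:makeinv}, to an approximation hom for $V/W$ with parameters $(F_{V/W}, m_{V/W}, \delta_{V/W})$ controlled by $(F, m, \delta)$ and $\|s_{F,m}\|$.

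The key identity, immediate from linearity, is
\[
T(b_j) = T_{V/W}(\pi(b_j)) + T\bigl(b_j - s_{F,m}(\pi(b_j))\bigr),
\]
where $b_j - s_{F,m}(\pi(b_j)) \in W$. Since $S_W$ is dynamically generating for $W$, expand $b_j - s_{F,m}(\pi(b_j)) \approx \sum_{k,\phi} c_{j,k,\phi}\,\phi a_k$ with coefficients $c_{j,k,\phi}$ depending only on $(F, m)$ and the fixed section; near-equivariance of $T$ then gives
\[
T\bigl(b_j - s_{F,m}(\pi(b_j))\bigr) \approx L_i\bigl((T(a_k))_k\bigr)(j),
\]
where $L_i\colon l^{\infty}(\NN, l^{p}(d_i)) \to l^{\infty}(\NN, l^{p}(d_i))$ is a fixed linear map independent of $T$. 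Given $\varepsilon > 0$ and $\varepsilon$-covers $Z_i^{W}$ and $Z_i^{V/W}$ of the appropriate $\alpha$-images with dimensions at most $(\dim_{\Sigma, l^{p}}(W, \R) + \varepsilon) d_i$ and $(\dim_{\Sigma, l^{p}}(V/W, \R) + \varepsilon) d_i$ respectively, the interleaved identification $l^{\infty}(\NN, l^{p}(d_i)) \cong l^{\infty}(\NN_{\mathrm{odd}}, l^{p}(d_i)) \oplus l^{\infty}(\NN_{\mathrm{even}}, l^{p}(d_i))$ shows that $\alpha_S(T)$ is $O(\varepsilon \cdot \|L_i\|)$-close to the subspace
\[
\{(x, y + L_i(x)) : x \in Z_i^{W},\ y \in Z_i^{V/W}\},
\]
whose dimension is at most $\dim Z_i^{W} + \dim Z_i^{V/W}$. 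Absorb $\|L_i\|$ into the error by shrinking $\varepsilon$ (legitimate since $\|L_i\|$ depends only on $(F, m)$), and take $\varepsilon \to 0$ to conclude.

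The most delicate step is verifying that $T_{V/W}$ actually defines an element of $\Hom_{\R, l^{p}}(S_{V/W}, F_{V/W}, m_{V/W}, \delta_{V/W}, \sigma_i)$ with parameters one can choose uniformly in $T$ (depending only on $(F, m, \delta)$ and on the fixed section), and controlling the coefficients $c_{j,k,\phi}$ so that $\|L_i\|$ stays bounded uniformly in $i$. This bookkeeping forces a specific order on the nested infima over $(F, m, \delta)$ in the definition of $f.\dim$, but does not affect the final dimension bound; the mixed-quantifier inequalities are then obtained by replacing the appropriate cover by one valid only on an infinite index set and tracking the intersection as above.
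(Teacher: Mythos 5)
Your overall architecture — interleave generating sequences for $W$ and $V/W$, split each $T\in \Hom_{\R,l^{p}}(S,F,m,\delta,\sigma_{i})$ into a $W$-piece and a $V/W$-piece, cover the two pieces separately, recombine via a graph-type subspace, and let the limsup/liminf bookkeeping produce the three mixed inequalities — is the same as the paper's. But there is a genuine gap at the step you yourself flag as ``most delicate'': the map $T_{V/W}(\pi(v)):=T(s_{F,m}(\pi(v)))$ is \emph{not} an approximate homomorphism for $V/W$, and no choice of parameters repairs this. The linear section $s_{F,m}$ is not equivariant: set $u_{\phi,j}:=\phi\, s_{F,m}(\pi(b_{j}))-s_{F,m}(\pi(\phi b_{j}))$, a fixed (generically nonzero) element of $W$. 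Up to the $O(\delta)$ equivariance defect of $T$ itself, the defect of $T_{V/W}$ at $\phi\,\pi(b_{j})$ equals $T(u_{\phi,j})$. Since $T\big|_{W}$ is an essentially arbitrary approximate homomorphism of $W$ (it is precisely the piece carrying $\dim_{\Sigma,l^{p}}(W,\R)$), $\|T(u_{\phi,j})\|$ is of order $\|u_{\phi,j}\|$, not of order $\delta$. Consequently the tuple $(T(s_{F,m}(\pi(b_{j}))))_{j}$ has no reason to lie within $\varepsilon$ of your cover $Z_{i}^{V/W}$ of $\alpha_{S_{V/W}}(\Hom_{\R,l^{p}}(S_{V/W},\dots))$, and the final covering estimate collapses.

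The paper repairs exactly this point. Instead of composing $T$ with a section of $\pi$, it restricts $T$ to the span of the $W$-generators, $\Xi(T)=T\big|_{W_{F',m'}}$, chooses a section $\Theta$ of $\Xi$ at the level of the Hom-spaces, and passes to $T-\Theta(\Xi(T))$, which vanishes on the $W$-generators and hence is small on $\ker\pi$ intersected with the relevant finite-dimensional subspace; only such a map descends (via Lemma 3.8 of \cite{Me}) to a genuine element $A$ of $\Hom_{\R,l^{p}}(S_{1},F_{1},m_{1},3\delta_{1},\sigma_{i})_{3}$. In your notation, the correct ``$V/W$-component'' of $T(b_{j})$ is $(T-\Theta(\Xi(T)))(b_{j})$, not $T(s_{F,m}(\pi(b_{j})))$; the two differ by $\Theta(\Xi(T))(b_{j})-T(w_{j})$, which is controlled only through $T\big|_{W}$ and the extension procedure $\Theta$, not through your fixed linear map $L_{i}$. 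To salvage your write-up you would have to replace $T\circ s_{F,m}$ by $T-\widetilde{T}_{W}$ for a suitable extension $\widetilde{T}_{W}$ of $T\big|_{W}$ to an approximate homomorphism of $V$, and then explain how the values $\widetilde{T}_{W}(b_{j})$ are covered in terms of the $W$-cover; that is where the real work of the theorem lies. The rest of your outline (the linear map $L_{i}$ built from expanding $b_{j}-s_{F,m}(\pi(b_{j}))$ in the generators of $W$, and the cofinite-versus-infinite index-set argument for the mixed inequalities) is sound.
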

\begin{proof} Let $S_{2}=(w_{j})_{j=1}^{\infty}$ be a dynamically generating sequence for $W,$ and $(a_{j})_{j=1}^{\infty}$ a dynamically generating sequence for $V/W.$ Let $v_{j}\in V$ be such that $v_{j}+W=a_{j},$ and $\|v_{j}\|\leq 2\|a_{j}\|.$ Let $S$ be the sequence
\[v_{1},w_{1},v_{2},w_{2},\cdots\]
we shall use $S$ and the pseudonorms
\[\|T\|_{S_{1},i}=\sum_{j=1}^{\infty}\frac{1}{2^{j}}\|T(a_{j})\|,\]
\[\|T\|_{S_{2},i}=\sum_{j=1}^{\infty}\frac{1}{2^{j}}\|T(w_{j})\|,\]
\[\|T\|_{S,i}=\sum_{j=1}^{\infty}\frac{1}{2^{j}}\|T(w_{j})\|+\sum_{j=1}^{\infty}\frac{1}{2^{j}}\|T(v_{j})\|\]
to do our calculation.

	Let $\varepsilon>0,$ and choose $m\in \NN$ such that $2^{-m}<\varepsilon.$ Let $e\in F_{1}\subseteq \Phi$ be finite, $m\leq m_{1}\in \NN$ and $\delta_{1}>0$ to be determined later. By Lemma 3.8 in \cite{Me} Choose $0<\delta<\delta_{1},$ and $F_{1}\subseteq E\subseteq \Phi$ finite and $m_{1}\leq k\in \NN$ so that if $G$ is a Banach space and 
\[T\colon V_{E,2k}\to G\]
has $\|T\|\leq 2,$ and 
\[\|T\big|_{W\cap V_{E,2k}}\|<\delta,\]
then there is a $A\colon (V/W)_{F_{1},m_{1}}\to G$ with $\|A\|\leq 3,$ and 
\[\|A(\psi a_{j})-T(\psi x_{j})\|<\delta_{1},\]
for all $1\leq j,k\leq m_{1}$ and $\psi \in (F_{1}\cup F_{1}^{*}\cup \{e\})^{m_{1}}.$ 

	By finite-dimensionality, we can find a $F'\supseteq E,$ $m'\geq 2k,$  and $0<\delta'<\delta_{1}$ so that if $G$ is a Banach space and $T\colon V_{F',m'}\to G$ has 
\[\|T(\psi x_{j})\|\leq \delta'\|\psi x_{j}\|\]
for all $1\leq j\leq m',\psi\in (F'\cup F'^{*}\cup \{\id\})^{m'}$ then
\[\|T\big|_{W\cap V_{E,2k}}\|<\delta.\]

	Define  $\Xi \colon \Hom_{\R,l^{p}}(S,F',m',\delta',\sigma_{i})\to \Hom_{\R,l^{p}}(S_{2},F',m',\delta',\sigma_{i})$ by
\[\Xi(T)=T\big|_{W_{F',m'}}.\]

	Find
\[\Theta\colon \im(\Xi)\to \Hom_{\R,l^{p}}(S,F',m',\delta',\sigma_{i})\]
so that 
\[\Xi\circ \Theta=\id.\]

	Then
\[(T-\Theta(\Xi(T))(\psi v_{j})=0\]
for all $1\leq j\leq m'$ and $\psi\in (F_{1}\cup F_{1}'\cup \{id\})^{m'}.$ Thus our assumption implies that we can find a $A\colon (V/W)_{F_{1},m_{1}}\to l^{p}(d_{i})$ so that 
\[\|T(\psi x_{j})-A(\psi a_{j})\|<\delta_{1}\]
for all $1\leq j\leq m_{1},\psi \in (F_{1}\cup F_{1}^{*} \cup \{\id\})^{m_{1}},$ with $\|A\|\leq 3.$  

	Thus whenever $\psi \in (F_{1}\cup F_{1}^{*} \cup \{\id\})^{m_{1}},$ and $C\subseteq \{1,\cdots,d_{i}\}$ we have
\[\|A(\psi a_{j})-\sigma_{i}(\psi)A(a_{j})\|_{l^{p}(C)}\leq 2\delta_{1}+\|T(\psi x_{j})-\sigma_{i}(\psi)A(a_{j})\|_{l^{p}(C)},\]
so $A\in \Hom_{\R,l^{p}}(S_{1},F_{1},m_{1},3\delta_{1},\sigma_{i})_{3}.$ The rest is easy.

\end{proof}

\section{Preliminary Results On Direct Integrals}

\begin{definition}\emph{ Let $(X,\mu)$ be a standard measure space, and $V=(V_{x})_{x\in X}$ a family of Banach spaces. We say that $V$ is} measurable \emph{if there are sequences $(v^{(j)}_{x})_{x\in X,j\in \NN},(\phi^{(j)}_{x})_{x\in X,j\in \NN}$ with $v^{(j)}_{x}\in V_{x},\phi^{(j)}_{x}\in V_{x}^{*}$ satisfying the following properties}\end{definition}

\begin{list}{Property \arabic{pcounter}:~}{\usecounter{pcounter}}
\item $x\mapsto\ip{v^{(j)}_{x},\phi^{(k)}_{x}}_{x\in X}$\mbox{ is measurable for all $j,k$}
\item $\overline{\Span}^{\|\cdot\|}\{v^{(j)}_{x}:j\in \NN\}=V_{x}$\mbox{ for almost every $x$}
\item $\overline{\Span}^{\wk^{*}}\{\phi^{(j)}_{x}:j\in \NN\}=V_{x^{*}}$\mbox{ for almost every $x$}
\item $x\mapsto \|\sum_{j}f(j)v^{(j)}_{x}\|$\mbox{ is measurable for all $f\in c_{c}(\NN)$}
\item $x\mapsto \|\sum_{j}f(j)\phi^{(j)}_{x}\|$\mbox{ is measurable for all $f\in c_{c}(\NN)$}
\end{list}

	It is a fact that if we are given properties $1-3,$ then property $4$ is actually equivalent to property $5.$ 

	We define the set of measurable vector fields, $\Meas(X,V)$ , to be all fields $(v_{x})_{x\in X}$ of vectors in $X$ such that  $v_{x}\in V_{x}$ for all $x$ and $x\mapsto \ip{v_{x},\phi^{(j)}_{x}}$ is measurable for all $j\in \NN.$ Note that our axioms imply that 
\[\|v_{x}\|=\sup_{\substack{f\in c_{c}(\NN,\QQ[i]),\\ \left\|\sum_{j}f(j)\phi^{(j)}_{x}\right\|<1}}\left|\sum_{j}\ip{v_{x},\phi^{(j)}_{x}}\right|.\]

so that the norm of a measurable vector fields is a measurable function. We also define $\Meas(X,V^{*})$ to be all fields of vectors $(\phi_{x})_{x}$ such that $\phi_{x}\in V_{x}^{*}$ for all $x\in X$ and $x\mapsto \ip{v^{(j)}_{x},\phi_{x}}$ is measurable for all $j\in \NN.$ As above $\|\phi_{x}\|$ is measurable. We leave it as an exercise to verify that if $v\in \Meas(X,V),\phi\in \Meas(X,V^{*})$ then $x\mapsto \ip{v_{x},\phi_{x}}$ is measurable.

	For $1\leq p<\infty,$ we define the $L^{p}$-direct integral of $V$ denoted
\[\int_{X}^{\oplus_{p}}V_{x}\,d\mu(x)\]
to be all $v\in \Meas(X,V)$ so that 
\[\|v\|_{p}^{p}=\int_{X}\|v_{x}\|^{p}\,d\mu(x)<\infty.\]

	H\"{o}lder's inequality shows that $\int_{X}^{\oplus_{p}}V_{x}\,d\mu(x)$ is a vector space.

\begin{proposition} Let $(X,\mu)$ be a standard measure space and $V$ a measurable field of Banach spaces over $X.$ Then for $1\leq p<\infty,$
\[\int_{X}^{\oplus_{p}}V_{x}\,d\mu(x)\]
is a separable Banach space. Further a sequence $(w^{(j)})_{j=1}^{\infty}$ in $\int_{X}^{\oplus_{p}}V_{x}\,d\mu(x)$ has
\[\Span\{\chi_{A}w^{(j)}:A\mbox{ measurable} ,j\in \NN\}\]
dense in $\int_{X}^{\oplus_{p}}V_{x}\,d\mu(x)$ if and only if for almost every $x,$ $(w^{(j)}_{x})_{j=1}^{\infty}$ spans a dense subspace.\end{proposition}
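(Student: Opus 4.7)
The plan is to verify completeness first, then prove both directions of the density criterion, and deduce separability as a corollary of the sufficient direction applied to the defining generating sequence $(v^{(j)})$. I expect the main obstacle to be the measurability bookkeeping in the necessary direction.

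For completeness, I will take a Cauchy sequence $(v_n)$ in the $L^p$-direct integral, extract a subsequence with $\|v_{n_{k+1}} - v_{n_k}\|_p < 2^{-k}$, and use Minkowski to conclude that $\sum_k \|v_{n_{k+1}, x} - v_{n_k, x}\|$ lies in $L^p(X, \mu)$ and hence converges for $\mu$-a.e.\ $x$. I then set $v_x$ to be the resulting pointwise limit (and $0$ on the null exceptional set). Measurability of $v$ follows from $\ip{v_x, \phi^{(j)}_x} = \lim_k \ip{v_{n_k, x}, \phi^{(j)}_x}$, the $L^p$ bound from another application of Minkowski, and $v_n \to v$ in $L^p$-norm from dominated convergence applied to $\|v_{n,x} - v_x\|^p$.

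For the sufficient direction of the density criterion, assume $(w^{(j)}_x)_j$ spans $V_x$ densely for a.e.\ $x$. Given $v$ in the direct integral and $\varepsilon > 0$, first reduce to the case where $v$ is supported on a set of finite measure and is uniformly norm-bounded, at negligible $L^p$-cost (using $\sigma$-finiteness of $\mu$ and $\|v\|_p < \infty$). Enumerate $c_c(\NN, \QQ[i]) = \{f_0 = 0, f_1, \ldots\}$, and let $X_n = \{x : \|v_x - \sum_j f_n(j) w^{(j)}_x\| < \varepsilon\}$, each measurable because the norm of a measurable vector field is measurable (this is exactly the remark embedded just after Property 5 of the definition). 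The hypothesis gives $X = \bigcup_n X_n$ up to a null set, so the disjointification $Y_n = X_n \setminus \bigcup_{k < n} X_k$ is a measurable partition. The infinite sum $u = \sum_n \sum_j f_n(j) \chi_{Y_n} w^{(j)}$ satisfies $\|v_x - u_x\| < \varepsilon$ pointwise a.e.; truncating to $n \leq N$ produces a genuine element $u_N \in \Span\{\chi_A w^{(j)}\}$, and $\|u - u_N\|_p$ vanishes as $N \to \infty$ by dominated convergence.

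For the necessary direction, suppose $(w^{(j)}_x)_j$ fails to span densely on a set of positive measure. By Property 2, some $v^{(k)}_x$ then lies outside $\overline{\Span}(w^{(j)}_x)_j$ on a positive measure set, and the distance function $d_k(x) = \dist(v^{(k)}_x, \overline{\Span}(w^{(j)}_x)_j)$ is measurable as a countable infimum of measurable functions. Decomposing the bad set by $k$, by a rational lower bound $1/n$ on $d_k$, and by an upper bound $m$ on $\|v^{(k)}_x\|$, I extract a measurable set $E$ of positive finite measure on which $\chi_E v^{(k)}$ lies in the direct integral but satisfies $\|\chi_E v^{(k)} - u\|_p \geq \mu(E)^{1/p}/n$ for every $u \in \Span\{\chi_A w^{(j)}\}$, since pointwise $\|v^{(k)}_x - u_x\| \geq d_k(x) > 1/n$ on $E$. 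This contradicts density. Finally, for separability, applying the sufficient direction to $(v^{(j)})$ itself (valid by Property 2) shows $\Span\{\chi_A v^{(j)}\}$ is dense; combining with separability of the measure algebra of a standard space (take a countable dense subalgebra $\mathcal{A}$ and restrict to sublevel sets $A \cap \{\|v^{(j)}_x\| \leq m\} \in \mathcal{A}$ to keep $\chi_A v^{(j)}$ in the $L^p$-direct integral) yields a countable dense subset.
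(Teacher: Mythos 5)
Your proof is correct, and for completeness and the sufficient direction of the density criterion it follows essentially the same route as the paper: the absolute-convergence criterion for completeness (your Cauchy-subsequence version is equivalent), and for sufficiency the same covering of $X$ by the sets where some $f\in c_{c}(\NN,\QQ[i])$ gives an $\varepsilon$-approximation, followed by disjointification and truncation. Where you genuinely diverge is the necessary direction. The paper simply asserts that on the positive-measure bad set $A$ there exists a measurable field $v\in \Meas(A,V)$ with $d(v_{x},\Span\{w^{(j)}_{x}\})\geq 1$ for all $x\in A$ and derives a pointwise contradiction; that existence claim is left unjustified and would itself require a measurable-selection argument. You instead produce the witness canonically from the defining sequence: by Property 2 some $v^{(k)}_{x}$ must lie outside $\overline{\Span}\{w^{(j)}_{x}\}$ on a positive-measure set, the distance functions $d_{k}$ are measurable as countable infima, and slicing by $k$, by a rational lower bound on $d_{k}$, and by an upper bound on $\|v^{(k)}_{x}\|$ yields an honest element $\chi_{E}v^{(k)}$ of the direct integral that is uniformly bounded away from the span in $\|\cdot\|_{p}$. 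This buys you a self-contained argument and also handles the normalization the paper glosses over (its pointwise contradiction implicitly needs the approximation error to be small relative to $\mu(A)^{1/p}$, which your $\mu(E)^{1/p}/n$ bound makes explicit). You also actually prove separability, via the density criterion applied to $(v^{(j)})$ together with a countable dense subalgebra of the measure algebra and the sublevel-set truncation needed to keep $\chi_{A}v^{(j)}$ $p$-integrable; the paper states separability in the proposition but never argues it. No gaps.
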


\begin{proof} Let $v^{(j)}_{x},\phi^{(j)}_{x}$ be as in the definition of measurable vector field. We first prove completeness.

	Suppose that $w^{(n)}$ in $\int_{X}^{\oplus_{p}}V_{x}\,d\mu(x)$ has
\[\sum_{n=1}^{\infty}\|w^{(n)}\|_{p}<\infty.\]

	Then,
\begin{align*}
\int_{X}\sum_{n=1}^{\infty}\|w^{(n)}_{x}\|^{p}\,d\mu(x)&\leq \liminf_{N\to \infty}\int_{X}\sum_{n=1}^{N}\|w^{(n)}_{x}\|^{p}\,d\mu(x)\\
&\leq \left(\sum_{n=1}^{N}\|w^{(n)}\|_{p}\right)^{p}\\
&\leq \left(\sum_{n=1}^{\infty}\|w^{(n)}\|_{p}\right)^{p}\\
&<\infty.
\end{align*}

	So for almost every $x,$ $w_{x}=\sum_{n=1}^{\infty}w^{(n)}_{x}$ is norm convergent in $V_{x}.$ It is easy to see by taking limits that $w\in \Meas(X,V).$ By the same inequalities as above we also see that 

\[\left\|w-\sum_{n=1}^{N}w^{(n)}\right\|_{p}\leq \sum_{n=N+1}^{\infty}\|w^{(n)}\|_{p}\to 0,\]
as $N\to \infty,$ and this proves completeness.

	For the second fact, first suppose that $w^{(j)}$ in $\int_{X}^{\oplus_{p}}V_{x}\,d\mu(x)$ is such that $\Span\{w^{(j)}_{x}:j\in \NN\}$ is dense in $V_{x}$ for almost every $x\in X>$ Let $v\in \int_{X}^{\oplus_{p}}V_{x}\,d\mu(x)$ and $\varepsilon>0.$ then up to sets of measure zero,
\[X=\bigcup_{f\in c_{c}(\NN,\QQ[i])}\left\{x\in X:\left\|\sum_{j=1}^{\infty}f(j)w^{(j)}_{x}-v_{x}\right\|<\varepsilon\right\}.\]

	Thus by the usual arguments we can find a measurable $f\colon X\to c_{c}(\NN,\QQ[i])$ such that for almost every $x\in X,$ we have
\[\left\|\sum_{j=1}^{\infty}f(x)(j)w^{(j)}_{x}-v_{x}\right\|<\varepsilon.\]

	Let $F_{n}$ be finite subsets of $\QQ[i]$ which increase to $\QQ[i],$ and so that $0\in F_{n}$ for all $n.$  For $n\in \NN,$ set
\[X_{n}=\{x\in X:f(x)(j)=0\mbox{ for $j\geq n$},f(x)(j)\in F_{n} \mbox{ for all $j$}\}.\]

	If $n$ is sufficiently large then,
\[\int_{X_{n}^{c}}\|v_{x}\|^{p}\,d\mu(x)<\varepsilon.\]

	Thus
\[\int_{X}\left\|\sum_{j=1}^{\infty}\chi_{X_{n}}f(x)(j)w^{(j)}_{x}-v_{x}\right\|^{p}\,d\mu(x)<\varepsilon^{p}+\varepsilon,\]
and it is easy to see that 
\[\sum_{j=1}^{\infty}\chi_{X_{n}}f(x)(j)w^{(j)}_{x}\]
is a finite linear combination of elements of the form $\chi_{A}w^{(j)}_{x}.$ This proves one implication.

	Conversely, suppose that $\chi_{A}w^{(j)}_{x}$ densely span $\int_{X}^{\oplus_{p}}V_{x}\,d\mu(x),$ but that 
\[A=\{x\in X:w^{(j)}_{x}\mbox{ does not densely span $V_{x}$}\}\]
has positive measure. Then there is a $v\in \Meas(A,V)$ so that 
\[d\left(v_{x},\Span\{w^{(j)}_{x}\}\right)\geq 1\]
for all $x\in A.$ But we can find  $\lambda_{1},\cdots,\lambda_{k}\in \CC,$ $j_{1},\cdots,j_{k}\in \NN$ and sets $A_{1},\cdots,A_{k}$ so that 
\[\left\|v-\sum_{j=1}^{k}\lambda_{j}\chi_{A_{j}}w^{(j)}\right\|_{p}<1/2.\]

	Replacing $A_{j}$ with $A\cap A_{j}$ we may assume $A_{j}\subseteq A.$ This clear implies that there is some $x\in A$ so that 
\[\left\|v-\sum_{j=1}^{k}\lambda_{j}\chi_{A_{j}}(x)w^{(j)}_{x}\right\|_{p}<1/2,\]

	and this is a contradiction.

\end{proof}

	Direct integrals arise naturally in the context of representations of equivalence relations.

\begin{definition}\emph{ Let $(\R,X,\mu)$ be a discrete measure preserving equivalence relation, and let $x\to V_{x}$ be measurable field of Banach spaces over $X.$} A representation $\pi$ of $\R$ on $V$ \emph{consists of bounded linear maps $\pi(x,y)\colon V_{y}\to V_{x}$ so that $\pi(z,x)\pi(x,y)=\pi(z,y)$ for $x\thicksim y\thicksim z,$ $\pi(x,x)=\id,$ and for each $v\in \Meas(X,V),\phi\in \Meas(X,V^{*})$ we have that $(x,y)\to \ip{\pi(x,y)v_{y},\phi_{x}}$ is a measurable map $\R\to \CC.$ We say that $\pi$ is} uniformly bounded \emph{if there is a $C>0$ so that $\|\pi(x,y)v\|\leq C$ for all $(x,y)\in \R,$ $v\in V_{y}.$ }\end{definition}

	Note that if $\pi$ is uniformly bounded, then for every $1\leq p<\infty,$ we get a uniformly bounded action of $\R$ on $\int_{X}^{\oplus_{p}}V_{x}\,d\mu(x)$ by
\[(\phi\cdot v)_{x}=\chi_{\ran(\phi)}(x)\pi(x,\phi^{-1}(x))v_{\phi^{-1}(x)}.\]

	Our work in this section has the following corollary which will allow us to work fiberwise in the case of representations on measurable fields. This will be used quite heavily in Section \ref{S:cohom}.

\begin{cor} Let $(\R,X,\mu)$ be a discrete measure-preserving equivalence relation, with a representation $\pi$ on a measurable field of Banach spaces $x\to V_{x}.$  If $w^{(j)}\in \int_{X}^{\oplus_{p}}V_{x}\,d\mu(x)$ is bounded, then $w^{(j)}$ is dynamically generating if and only if for almost every $x,$
\[\overline{\Span\{\pi(x,y)w^{(j)}_{y}:y\thicksim x\}}^{\|\cdot\|_{V_{x}}}=V_{x}.\]
\end{cor}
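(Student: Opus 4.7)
The plan is to reduce the corollary to the preceding proposition (which characterizes fiberwise density in an $L^{p}$-direct integral) by writing $\R$ as a countable union of graphs of partial $\R$-morphisms, thereby converting the uncountable set $\{\pi(x,y)w^{(j)}_{y}:y\thicksim x\}$ into the fibers of a countable family of vectors in $\int_{X}^{\oplus_{p}}V_{x}\,d\mu(x)$.

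First I would fix, by a standard measurable selection argument, a countable family $\{\phi_{k}\}_{k\in\NN}\subseteq [[\R]]$ such that $\R=\bigcup_{k}\graph(\phi_{k})$ up to a null set. The basic identity I will exploit is that for almost every $x\in X$,
\[
\Span\{\pi(x,y)w^{(j)}_{y}:y\thicksim x,\ j\in\NN\}
=\Span\{(\phi_{k}w^{(j)})_{x}:k,j\in\NN\}.
\]
The inclusion $\supseteq$ is immediate from the definition of the action. For $\subseteq$, given $y\thicksim x$, pick $k$ with $(x,y)\in\graph(\phi_{k})$, so $\phi_{k}^{-1}(x)=y$ and $(\phi_{k}w^{(j)})_{x}=\pi(x,y)w^{(j)}_{y}$.

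Second, I would show that the closed linear span of $\{\chi_{A}\phi_{k}w^{(j)}:A\subseteq X\text{ measurable},\ k,j\in\NN\}$ equals the closed linear span of $\{\phi w^{(j)}:\phi\in[[\R]],\ j\in\NN\}$ in $\int_{X}^{\oplus_{p}}V_{x}\,d\mu(x)$. The inclusion $\subseteq$ is trivial since $\chi_{A}\phi_{k}w^{(j)}=(\id_{A}\phi_{k})w^{(j)}$ and $\id_{A}\phi_{k}\in[[\R]]$. For $\supseteq$, given $\phi\in[[\R]]$ and $j$, partition $\ran(\phi)=\bigsqcup_{k}A_{k}$ where
\[
A_{k}=\{x\in\ran(\phi):\phi^{-1}(x)=\phi_{k}^{-1}(x)\}\setminus\bigcup_{l<k}A_{l}
\]
(well-defined up to a null set, since $\R=\bigcup_{k}\graph(\phi_{k})$). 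On $A_{k}$ we have $(\phi w^{(j)})_{x}=(\phi_{k}w^{(j)})_{x}$, hence $\phi w^{(j)}=\sum_{k}\chi_{A_{k}}\phi_{k}w^{(j)}$, with the sum converging in $L^{p}$-norm thanks to the disjointness of the $A_{k}$ and the bound on $\|\phi w^{(j)}\|_{p}$.

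Combining these two steps with the previous proposition applied to the (enumerated) sequence $\{\phi_{k}w^{(j)}\}_{k,j}$ yields the chain of equivalences: $w^{(j)}$ is dynamically generating iff $\Span\{\chi_{A}\phi_{k}w^{(j)}\}$ is dense in the direct integral iff $\Span\{(\phi_{k}w^{(j)})_{x}:k,j\}$ is dense in $V_{x}$ for a.e. $x$ iff $\Span\{\pi(x,y)w^{(j)}_{y}:y\thicksim x,\ j\}$ is dense in $V_{x}$ for a.e. $x$. The main obstacle is the bookkeeping for the countable decomposition of $\R$ into graphs and the $L^{p}$-convergence of the resulting decomposition $\phi w^{(j)}=\sum_{k}\chi_{A_{k}}\phi_{k}w^{(j)}$; once these are in place, every other step is either trivial or a direct appeal to the preceding proposition.
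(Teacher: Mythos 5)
Your proof is correct and is exactly the derivation the paper intends (the paper states the corollary without proof as a consequence of the preceding proposition): decompose $\R$ into countably many graphs of partial morphisms, identify the fiberwise span $\Span\{\pi(x,y)w^{(j)}_{y}:y\thicksim x\}$ with $\Span\{(\phi_{k}w^{(j)})_{x}\}$, show $\overline{\Span}\{\chi_{A}\phi_{k}w^{(j)}\}=\overline{\Span}\{\phi w^{(j)}:\phi\in[[\R]]\}$ via the $L^{p}$-convergent decomposition $\phi w^{(j)}=\sum_{k}\chi_{A_{k}}\phi_{k}w^{(j)}$, and invoke the proposition. All steps check out, including the dominated-convergence argument for the tail of the sum.
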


\section{Computations for $L^{p}(\R,\overline{\mu}).$}

	Here we prove that 
\[\dim_{\Sigma,l^{p}}(L^{p}(\R,\overline{\mu})^{\oplus n},\R)=\underline{\dim}_{\Sigma,l^{p}}(L^{p}(\R,\overline{\mu})^{\oplus n},\R)=n.\]

	We must take a different approach than that in \cite{Me}, as the operators defined there will not fill up enough space if we use our different version of $\varepsilon$-dimension. Instead, we shall take a more probabilistic approach.
\begin{proposition} Fix $1\leq p<\infty.$ Let $A_{i}\subseteq B(l^{p}(n,\nu_{n})),$ be measurable, where $\nu_{n}$ is the uniform measure,  and suppose  that 
\[\limsup_{i\to \infty}\left(\frac{\vol(A_{i})}{\vol(B(l^{p}(n,\nu_{i}))}\right)^{1/2n}\geq \alpha.\]

	Then there is a $\kappa(\alpha,\varepsilon,p)$ with
\[\lim_{\varepsilon\to 0}\kappa(\alpha,\varepsilon,p)=1,\]
so that 
\[\liminf_{i\to \infty}\frac{1}{n}d_{\varepsilon}(A_{i},\|\cdot\|_{p})\geq \kappa(\alpha,\varepsilon,p).\]

\end{proposition}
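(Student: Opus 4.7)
The plan is a volumetric argument: bound $\vol(A_i)$ from above in terms of $k_i := d_{\varepsilon}(A_i, \|\cdot\|_p)$, then invert to get a lower bound on $k_i/n$ given the volume assumption. Fix a $k_i$-dimensional subspace $W \subseteq l^p(n,\nu_n)$ which $\varepsilon$-contains $A_i$. Unpacking the definition of $\varepsilon$-containment, every $T \in A_i$ admits some $w \in W$ and $C = C(T) \subseteq \{1, \ldots, n\}$ with $|C| \geq (1-\varepsilon)n$ and $\|\chi_C(T-w)\|_p < \varepsilon$.

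First I would cover $A_i$ by the union $\bigcup_C A_{i,C}$ over subsets $C$ of size at least $(1-\varepsilon)n$, where
\[A_{i,C} = \{T \in B(l^p(n,\nu_n)) : \exists\, w \in W,\ \|\chi_C(T-w)\|_p < \varepsilon\}.\]
For fixed $C$, let $W_C$ denote the projection of $W$ onto $\CC^C$; since $\dim W_C \leq k_i$, the $C$-coordinates of any $T \in A_{i,C}$ lie in the $l^p$-tube $(W_C + \varepsilon B(l^p(C))) \cap B(l^p(C))$, while the $C^c$-coordinates are only constrained by $\|T\|_p \leq 1$. A tube-lemma style bound then yields
\[\vol_{2|C|}\bigl((W_C + \varepsilon B(l^p(C))) \cap B(l^p(C))\bigr) \leq \vol_{2|C|}(B(l^p(C)))\,(c_p\varepsilon)^{2(|C|-k_i)}\]
for some $p$-dependent constant $c_p$, whence $\vol(A_{i,C}) \leq \vol(B(l^p(C)))\,\vol(B(l^p(C^c)))\,(c_p\varepsilon)^{2(|C|-k_i)}$.

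Summing over the $\binom{n}{\lfloor\varepsilon n\rfloor} \leq 2^{H(\varepsilon)n}$ choices of $C$ (with $H$ the binary entropy), taking $2n$-th roots, and using that
\[\left(\frac{\vol(B(l^p(|C|)))\,\vol(B(l^p(|C^c|)))}{\vol(B(l^p(n)))}\right)^{1/2n} \to 1\]
by the standard gamma-function formula for $l^p$-ball volumes, I would obtain
\[\left(\frac{\vol(A_i)}{\vol(B(l^p(n)))}\right)^{1/2n} \leq 2^{H(\varepsilon)/2}(1+o(1))(c_p\varepsilon)^{1-\varepsilon-k_i/n}.\]
If the left side is at least $\alpha$ (and $\varepsilon$ is small enough that $c_p\varepsilon < 1$), then solving for $k_i/n$ gives $k_i/n \geq 1-\varepsilon - \frac{\log\alpha - (H(\varepsilon)/2)\log 2 - o(1)}{\log(c_p\varepsilon)}$; one defines $\kappa(\alpha,\varepsilon,p)$ to be (a slight shrinkage of) this quantity. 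Since $H(\varepsilon) \to 0$ and $\log(c_p\varepsilon) \to -\infty$ as $\varepsilon \to 0$, we get $\kappa(\alpha,\varepsilon,p) \to 1$. The liminf conclusion then follows by passing to the subsequence on which the limsup of the volume ratio is achieved and applying the per-$i$ bound.

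The main obstacle is justifying the tube inequality for $p \neq 2$: the $l^p$ unit ball is not preserved by $l^2$-orthogonal projection, so the slick Brunn-concavity slicing proof valid for the Euclidean tube does not apply directly. I would either absorb a $p$-dependent constant into $c_p$ and verify it does not spoil the $2n$-th root asymptotics, or exploit the inclusion $(W + \varepsilon K) \cap K \subseteq (W \cap 2K) + \varepsilon K$ valid for any symmetric convex body $K$, combined with mixed-volume / Rogers-Shephard-type bounds on $\vol((W \cap 2K) + \varepsilon K)$. Either route only introduces constants depending on $p$ (not on $n$), which is harmless at the level of $2n$-th roots.
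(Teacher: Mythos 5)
Your proposal is correct and follows essentially the same route as the paper: decompose over the exceptional set $C^{c}$ of size at most $\varepsilon n$, bound the volume of the $\varepsilon$-tube around the coordinate restriction $\chi_{C}(W)$ (which has dimension at most $k_{i}$), and control the ratio of $l^{p}$-ball volumes via the Gamma-function formula and Stirling, letting $\varepsilon\to 0$ at the end (the paper phrases this as a contradiction with a fixed $\kappa<1$ rather than solving for $k_{i}/n$, which is purely presentational). The tube inequality you flag as the main obstacle is handled in the paper exactly by your second suggested route: cover $(1+\varepsilon)\Ball(\chi_{C}(W))$ by an $\varepsilon$-net of cardinality at most $\left(\frac{2+4\varepsilon}{\varepsilon}\right)^{2\kappa n}$ and replace the tube by that many balls of radius $2\varepsilon$ --- a norm-agnostic covering argument, so no orthogonal-projection issue for $p\neq 2$ arises.
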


\begin{proof} If the claim is false, then there is a $\kappa<1,$ so that for every $\varepsilon>0,$
\[\kappa>\liminf_{i\to \infty}\frac{1}{d_{i}}d_{\varepsilon}(A_{i},\|\cdot\|_{p}),\]

	Then for all large $n,$ we can find a subspace $W\subseteq l^{p}(n)$ with $\dim(W)\leq \kappa n,$ and $A\subseteq_{\varepsilon} W.$ This implies that 
\[A\subseteq \bigcup_{\substack{B\subseteq\{1,\cdots,n\},\\ |B|\leq \varepsilon n}}((1+\varepsilon)\Ball(\chi_{B^{c}}(W))+\varepsilon\Ball(l^{p}(B^{c},\nu_{B^{c}}))\times \Ball(l^{p}(B,\nu_{B})).\]

	Since $\chi_{B^{c}}(W)$ has dimension at most $\kappa n,$ we can find a $\varepsilon$-dense subset of $(1+\varepsilon)\Ball(\chi_{B^{c}}(W))$ of cardinality at most $\left(\frac{2+4\varepsilon}{\varepsilon}\right)^{2\kappa n}.$ Thus
\begin{align*}
&\vol((1+\varepsilon)\Ball(\chi_{B^{c}}(W))+\varepsilon\Ball(l^{p}(B^{c},\nu_{B^{c}}))\leq\\
 &\left(\frac{2+4\varepsilon}{\varepsilon}\right)^{2\kappa n}\vol(\Ball(l^{p}(B^{c},\nu_{B^{c}}))(2\varepsilon)^{2|B^{c}|}.
\end{align*}

	So $\frac{\vol(A)}{\vol(\Ball(l^{p}(n,\nu_{n}))}$ is at most
\[\sum_{\substack{B\subseteq\{1,\cdots,d_{i}\}\\ |B|\leq \varepsilon n}}2^{|B^{c}|}(\varepsilon)^{2(|B^{c}|-\kappa n)}(2+4\varepsilon)^{2\kappa n}\frac{\vol(\Ball(l^{p}(B^{c},\nu_{B^{c}}))\vol(\Ball(l^{p}(B,\nu_{B}))}{\vol(\Ball(l^{p}(n,\nu_{n}))}.\]

	We have that the above sum is
\[\sum_{r=0}^{\lfloor{\varepsilon n\rfloor}}2^{n-r}(\varepsilon)^{2(n(1-\kappa)-r)}(2+4\varepsilon)^{2\kappa n}\binom{n}{r}V(r,n,p)\]
where
\[V(r,n,p)=\frac{r^{2r/p}(n-r)^{2(n-r)/p}\Gamma(1+\frac{2n}{p})}{\Gamma(1+\frac{2r}{p})\Gamma(1+\frac{2n-2r}{p})n^{2n/p}}.\]

	By Stirling's Formula we see that 
\[V(r,n,p)\leq C(p),\]
where $C(p)$ is a constant which depends only on $p.$ 

	Further if $n$ is sufficiently large and $\varepsilon<1/2,$ then by Stirling's Formula
\[\binom{n}{r}\leq \binom{n}{\lfloor{\varepsilon n\rfloor}}\leq C\left(\frac{n}{\lfloor{\varepsilon n\rfloor}}\right)^{\lfloor{\varepsilon n\rfloor}}\left(\frac{n}{n-\lfloor{\varepsilon n\rfloor}}\right)^{n-\lfloor{\varepsilon n\rfloor}},\]
for some constant $C.$ 

	Putting this altogether, we have that 
\[\alpha\leq \sqrt{2}\varepsilon^{(1-\kappa)-\varepsilon}(2+4\varepsilon)^{\kappa}\left(\frac{1}{\varepsilon}\right)^{\varepsilon}\left(\frac{1}{1-\varepsilon}\right)^{1-\varepsilon}.\]
	
	Since $\kappa<1,$ the right-hand side tends to zero as $\varepsilon\to 0$ so we have a contradiction.

\end{proof}

\begin{theorem}\label{T:LpDim}  Let $\R$ be a sofic discrete  measure-preserving equivalence relation on a standard probability space $(X,\mu).$ For all $1\leq p\leq 2,$ we have
\[\dim_{\Sigma,l^{p}}(L^{p}(\R,\overline{\mu})^{\oplus n},\R)=\underline{\dim}_{\Sigma,l^{p}}(L^{p}(\R,\overline{\mu})^{\oplus n},\R)=n.\]
\end{theorem}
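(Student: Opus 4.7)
The plan is to establish matching upper and lower bounds.

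\emph{Upper bound.} The vectors $a_k := \chi_\Delta \otimes e_k$, $1 \le k \le n$, form a dynamically generating sequence for $L^p(\R, \overline\mu)^{\oplus n}$, since $\Span\{v_\phi \chi_\Delta : \phi \in [[\R]]\}$ is dense in $L^p(\R, \overline\mu)$. Each $a_k$ has $\supp a_k = X$ of measure $1$, so the support proposition above immediately gives $\dim_{\Sigma, l^p}(L^p(\R, \overline\mu)^{\oplus n}, \R) \le n$.

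\emph{Lower bound.} The hard direction is $\underline\dim_{\Sigma, l^p} \ge n$, which I would prove using the volume-to-dimension proposition just established. Fix a dynamically generating sequence $S$ with $v_k = a_k$ for $k \le n$. For each $\xi = (\xi_1, \ldots, \xi_n)$ in most of the unit ball of $l^p(d_i)^{\oplus n} \cong l^p(n d_i)$, I want to build a $T_\xi \in \Hom_{\R, l^p}(S, F, m, \delta, \sigma_i)$ with $T_\xi(a_k) = \xi_k$. The natural definition is
\[ T_\xi\!\left(\sum_{k, \phi} c_\phi^{(k)}\, v_\phi \chi_\Delta \otimes e_k\right) = \sum_{k, \phi} c_\phi^{(k)}\, \sigma_i(\phi) \xi_k, \]
initially on the finite-dimensional span of $\{v_\phi \chi_\Delta \otimes e_k : \phi \in F^m,\ k \le n\}$, then extended to all of $L^p(\R, \overline\mu)^{\oplus n}$ via the bounded approximation property.

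The technical core is a concentration argument: for $\xi$ uniform on $B(l^p(n d_i))$, the approximate disjointness of the graphs of $\phi \in F^m$ (controlled by $F, m, \delta$) together with $\tr \circ \sigma_i \to \tau$ yield
\[ \left\| \sum_{k,\phi} c_\phi^{(k)} \sigma_i(\phi) \xi_k \right\|_p^p \approx \sum_{k,\phi} |c_\phi^{(k)}|^p\, \mu(\dom \phi)\, \|\xi_k\|_p^p \le \left\| \sum_{k,\phi} c_\phi^{(k)} v_\phi \chi_\Delta \otimes e_k \right\|_{L^p}^p. \]
For $1 \le p \le 2$, moment estimates on uniform points of $l^p$-balls combined with a union bound over an $\eta$-net in the coefficient space give $\|T_\xi\|_{\mathrm{op}} \le 1 + o(1)$ uniformly outside a set of vanishing volume fraction; the almost-equivariance conditions then follow from approximate multiplicativity of $\sigma_i$. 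After a $1 + o(1)$ rescaling, we obtain genuine $T_\xi$'s parameterized by a set $A \subseteq B(l^p(n d_i))$ with $\vol(A)/\vol(B(l^p(n d_i))) \to 1$.

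Since $\alpha_S(T_\xi)$ carries $(\xi_1, \ldots, \xi_n, 0, \ldots)$ in its first $n$ coordinates, the set $\alpha_S(\Hom_{\R, l^p}(\cdots))$ volumetrically contains $A$. Applying the volume-to-dimension proposition with $\alpha \to 1$ yields $\liminf_i d_\varepsilon(\cdots)/d_i \ge n\,\kappa(\alpha, \varepsilon, p)$; letting $\varepsilon \to 0$ completes the lower bound. The main obstacle is the uniform-in-coefficient control of $\|T_\xi\|_{\mathrm{op}}$, which requires a delicate net-and-union-bound argument combined with sharp moment bounds; the restriction $p \le 2$ enters precisely here, since tail estimates for linear functionals on $l^p$-balls degrade for $p > 2$.
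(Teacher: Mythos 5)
Your upper bound and the overall architecture of the lower bound (randomize $T_{\xi}$ over a ball, show the good set has full volume fraction, then invoke the volume-packing proposition) match the paper. But the technical core of the lower bound has a genuine gap: the claimed approximate identity
\[
\Bigl\|\sum_{k,\phi}c^{(k)}_{\phi}\sigma_{i}(\phi)\xi_{k}\Bigr\|_{p}^{p}\approx\sum_{k,\phi}|c^{(k)}_{\phi}|^{p}\,\mu(\dom\phi)\,\|\xi_{k}\|_{p}^{p}
\]
is false for $p\neq 2$. The $j$-th coordinate of $\sum_{\phi}c_{\phi}\sigma_{i}(\phi)\xi$ is a sum $\sum_{\phi}c_{\phi}\xi_{\sigma_{i}(\phi)^{-1}(j)}$ of many distinct coordinates of $\xi$, and $|\sum_{\phi}c_{\phi}\xi_{j_{\phi}}|^{p}$ does not decouple into $\sum_{\phi}|c_{\phi}|^{p}|\xi_{j_{\phi}}|^{p}$; for a generic $\xi$ the left side behaves like $(\sum_{\phi}|c_{\phi}|^{2})^{p/2}$ times a moment constant, not like $\sum_{\phi}|c_{\phi}|^{p}$, and obtaining a bound by exactly $1+o(1)$ \emph{uniformly over all coefficient vectors} (including the small-support regimes where the CLT heuristic fails and the weights $\mu(\dom\phi)$ matter) is precisely the hard point that your net-and-union-bound sketch does not resolve. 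There is also a well-definedness problem: the vectors $v_{\phi}\chi_{\Delta}$, $\phi\in F^{m}$, satisfy linear relations (e.g.\ $v_{\phi}\chi_{\Delta}=v_{\psi_{1}}\chi_{\Delta}+v_{\psi_{2}}\chi_{\Delta}$ when $\phi$ splits as a disjoint union), which $\phi\mapsto\sigma_{i}(\phi)\xi$ only respects approximately, so the formula for $T_{\xi}$ is not well posed on the span without first choosing a genuine basis.

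The paper avoids both problems by a different construction. It decomposes $\R$ over a fine partition $\{A_{j}\}$ into disjoint graph pieces $\graph(\psi)$, $\psi\in E_{j}$, writes $\id_{A_{j}}f=\sum_{\psi}f_{\psi}\chi_{\graph(\psi)}$, and sets $T^{(j)}_{\xi}(f)=\sum_{\psi\in F_{j}}\EE_{\dom(\psi)}(f_{\psi})\,\sigma_{i}(\psi^{-1})\xi$, i.e.\ it first conditionally averages the coefficient functions; this is defined on all of $L^{p}(\R,\overline{\mu})$ at once and is automatically well defined. The norm bound is then proved only at the two endpoints: $L^{1}\to l^{1}$ by the triangle inequality plus a first-moment concentration estimate for $\|\sigma_{i}(\psi)^{-1}\xi\|_{1}$, and $L^{2}\to l^{2}$ by controlling the cross terms $\ip{\sigma_{i}(\psi)^{-1}\xi,\sigma_{i}(\phi)^{-1}\xi}$, whose mean is essentially $\tr(\sigma_{i}(\phi)\sigma_{i}(\psi))\to 0$; the same $\xi$, drawn from the \emph{Euclidean} ball, then works for all $1\leq p\leq 2$ by Riesz--Thorin interpolation, and a volume comparison between the $l^{2}$- and $l^{p}$-balls feeds into the packing proposition. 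So the restriction $p\leq 2$ enters through interpolation between the $p=1$ and $p=2$ endpoints, not through tail estimates for linear functionals on $l^{p}$-balls as you suggest. To repair your argument you would need either to supply the uniform moment/net estimate you allude to (which I do not believe holds in the form stated) or to switch to the conditional-expectation-plus-interpolation scheme.
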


\begin{proof}  We shall present the proof when $n=1.$ Since our approach is probabilistic, it is not hard to generalize the proof for general $n.$

Let $\Sigma$ be a sofic approximation of $\R,$ and let $\id\in \Phi=\Phi_{0}\cup \mathcal{P},$ where $\Phi_{0}$ is a graphing of $\R,$ and $\mathcal{P}$ is generating family of projections in $L^{\infty}(X,\mu).$ Let $\id \in F\subseteq \Phi$ be finite, $m\in \NN,\delta>0.$ We use $S=(\chi_{\Delta})$ to do our computation. It is clear that 
\[\dim_{\Sigma,l^{p}}(L^{p}(R,\overline{\mu}),\R)\leq 1,\]
so it suffices to show that 
\[\underline{\dim}_{\Sigma,l^{p}}(L^{p}(R,\overline{\mu}),\R)\geq 1.\]

	Let
\[C=W^{*}(\{v_{\psi}pv_{\psi}^{*}:\psi\in F^{m},p\in \mathcal{P}\cap F^{m}\}),\]
 and let $\chi_{B_{1}},\cdots ,\chi_{B_{r}}$ be the minimal projections in $C.$ Let $\{A_{1},\cdots,A_{q}\}$ be a partition refining $\{B_{1},\cdots,B_{r}\},$ which we will assume to be sufficiently fine in a manner to be determined later. We may assume that $\Sigma$ is eventually a homomorphism on $W^{*}(\{A_{j}\}_{j=1}^{q}),$ there are $E_{j}\subseteq [[\R]],$ \
\[\mathcal{O}_{A_{j}}\colon =\{(x,y)\in R:x\in A_{j}\}=\bigsqcup_{\psi\in E_{j}}\graph(\psi),\]
and that
\[F^{m}\subseteq E_{1}^{-1}+E_{2}^{-1}+\cdots +E_{q}^{-1}.\]

	We may also assume that for every $\psi\in E_{j}$ and for all large $i,$ we have that $\dom(\sigma_{i}(\psi))\subseteq \sigma_{i}(A_{j}).$

	Note that if $f\in L^{p}(\R,\overline{\mu}),$ then we can uniquely write
\[\id_{A_{j}}f=\sum_{\psi \in E_{j}}f_{\psi}\chi_{\graph(\psi)},\]
where $f_{\psi}\in L^{p}(\dom(\psi),\mu)$ and the sum converges in $\|\cdot\|_{p}.$
	Fix $\eta>0,$ and let $F_{j}\subseteq E_{j}$ be finite and so that for all $\psi\in F^{m},$
\[\dist_{\|\cdot\|_{2}}(\psi,F^{m})<\eta.\]

	For $\xi\in l^{p}(d_{i},\nu_{i})$ define 
\[T^{(j)}_{\xi}(f)=\sum_{\psi \in F_{j}}\EE_{\dom(\psi)}(f_{\psi})\sigma_{i}(\psi^{-1})\xi,\]
where for a measurable $A\subseteq X,$ and $f\in L^{1}(A,\mu)$ we use 
\[\EE_{A}(f)=\frac{1}{\mu(A)}\int_{A}f\,d\mu.\]

	Finally set
\[T_{\xi}=\sum_{j=1}^{q}T^{(j)}_{\xi}(f).\]

	We claim that if $\{A_{1},\cdots,A_{q}\}$ is sufficiently fine, and $i$ is sufficiently large, then 
\begin{equation}\label{E:probarg}
\frac{\mu(\{\xi\in \Ball(l^{2}(d_{i},\nu_{i})):\|T_{\xi}\|_{L^{p}\to l^{p}}\leq 1,\mbox{ for all $1\leq p\leq 2$}\})}{\mu(\Ball(l^{2}(d_{i},\nu_{i})}\to 1.
\end{equation}

	By interpolation it suffices to show that
\[\frac{\mu(\{\xi\in \Ball(l^{2}(d_{i},\nu_{i})):\|T_{\xi}\|_{L^{1}\to l^{1}}\leq 1,)}{\mu(\Ball(l^{2}(d_{i},\nu_{i})}\to 1,\]
\[\frac{\mu(\{\xi\in \Ball(l^{2}(d_{i},\nu_{i})):\|T_{\xi}\|_{L^{2}\to l^{2}}\leq 1,)}{\mu(\Ball(l^{2}(d_{i},\nu_{i})}\to 1,\]

	Let us first do the $l^{2}$ case. We have that
\[\|T^{(j)}_{\xi}(f)\|_{2}^{2}\leq \sum_{\psi\in F_{j}}|\EE_{\dom(\psi)}(f_{\psi})|^{2}\|\sigma_{i}(\psi)^{-1}\xi\|_{2}^{2}+\]
\[\sum_{\phi \ne \psi \in F_{j}}|\EE_{\dom(\psi)}(f_{\psi})\EE_{(\dom(\phi)}(f_{\phi})||\ip{\sigma_{i}(\psi)^{-1}\xi,\sigma_{i}(\phi)^{-1}\xi}|\leq\]
\[\sum_{\psi\in F_{j}}\frac{\|f_{\psi}\|_{2}^{2}}{\mu(\dom(\psi)}\|\sigma_{i}(\psi)^{-1}\xi\|_{2}^{2}+\]
\[\sum_{\phi \ne \psi \in F_{j}}\frac{\|f_{\psi}\|_{2}\|f_{\phi}\|_{2}}{\mu(\dom(\psi)^{1/2}\mu(\dom(\phi)^{1/2}}|\ip{\sigma_{i}(\psi)^{-1}\xi,\sigma_{i}(\phi)^{-1}\xi}|.\]

	Since
\[\frac{1}{\vol(\Ball(l^{2}(d_{i},\nu_{i})}\int_{\Ball(l^{2}(d_{i},\nu_{i})}\|\sigma_{i}(\psi)^{-1}\xi\|_{2}^{2}\,d\xi\leq \frac{|\dom(\sigma_{i}(\psi)^{-1}|}{d_{i}}\to \mu(\dom(\psi)),\]
\[\frac{1}{\vol(\Ball(l^{2}(d_{i},\nu_{i})}\int_{\Ball(l^{2}(d_{i},\nu_{i})}\ip{\sigma_{i}(\psi)^{-1}\xi,\sigma_{i}(\phi)^{-1}\xi}\,d\xi=\frac{2n}{2n+2}\tr(\sigma_{i}(\phi)\sigma_{i}(\psi))\to 0,\]
it follows by concentration of measure that $\PP(\|T^{(j)}_{\xi}\|\leq 2\mbox{ for all $j$})\to 1.$ If $\|T^{(j)}(\xi)\|_{2}\leq 2$ for all $j,$ then
\[\|T(f)\|_{2}^{2}=\sum_{j=1}^{q}\|T^{(j)}_{\xi}(\id_{A_{j}}f)\|_{2}^{2}\leq 4\sum_{j=1}^{q}\|\id_{A_{j}}f\|_{2}^{2}\leq 4\|f\|_{2}^{2}.\]

	For the $l^{1}$-case, simply note that 
\[\|T^{(j)}_{\xi}(f)\|_{1}\leq \sum_{\psi\in F_{j}}\frac{\|f_{\psi}\|_{1}}{\mu(\dom(\psi))}\|\sigma_{i}(\psi)^{-1}\xi\|.\]

	Since
\[\frac{1}{\vol(\Ball(l^{2}(d_{i},\nu_{i})}\int_{\Ball(l^{2}(d_{i},\nu_{i})}\|\sigma_{i}(\psi)^{-1}\xi\|_{1}\,d\xi=\]
\[\frac{|\dom(\sigma_{i}(\psi)^{-1})|}{d_{i}}\frac{1}{\vol(\Ball(l^{2}(d_{i},\nu_{i})}\int_{\Ball(l^{2}(d_{i},\nu_{i})}|\xi_{1}|d\xi\leq\]
\[\frac{|\dom(\sigma_{i}(\psi)^{-1})|}{d_{i}}\left(\frac{1}{\vol(\Ball(l^{2}(d_{i},\nu_{i})}\int_{\Ball(l^{2}(d_{i},\nu_{i})}|\xi_{1}|^{2}\,d\xi\right)^{1/2}=\]
\[\frac{|\dom(\sigma_{i}(\psi)^{-1})|}{d_{i}}\left(\frac{1}{\vol(\Ball(l^{2}(d_{i},\nu_{i})}\int_{\Ball(l^{2}(d_{i},\nu_{i})}\|\xi\|_{2}^{2}\,d\xi\right)^{1/2}\leq \]
\[\frac{|\dom(\sigma_{i}(\psi)^{-1})|}{d_{i}}\to \mu(\dom(\psi)).\]

	So, again by concentration of measure
\[\PP(\{\xi:\|T^{(j)}_{\xi}\|_{L^{1}\to l^{1}}\leq 2\mbox{ for all $j$})\to 1.\]

	If $\|T^{(j)}_{\xi}\|_{L^{1}\to l^{1}}\leq 2$ for all $j,$ it is again easy to see that $\|T_{\xi}\|_{L^{1}\to l^{1}}\leq 1.$ Thus $(\ref{E:probarg})$ holds.  Suppose $\phi\in F^{m},$ by our choice of $E_{1},\cdots,E_{q},$ we may write
\[\phi=\sum_{j=1}^{q}\sum_{\psi\in E_{j}}c_{j,\psi}\psi^{-1},\]
where $c_{j,\psi}\in \{0,1\},$ further
\[\left\|\phi-\sum_{j=1}^{q}\sum_{\psi\in F_{j}}c_{j,\psi}\psi^{-1}\right\|_{2}^{2}<\eta^{2}.\]

	So
\[\|T(\chi_{\graph(\phi)})-\sigma_{i}(\phi)T(\chi_{\Delta})\|_{2}=\left\|\left(\sum_{j=1}^{q}\sum_{\psi\in F_{j}}c_{j,\psi}\sigma_{i}(\psi)^{-1}-\sigma_{i}(\phi)\right)\xi\right\|_{2},\]
and for most $\xi$ this is at most $2\eta,$ again by concentration of measure. Thus we have shown that 
\[\frac{\vol(\alpha_{S}(\Hom_{\R,l^{p}}(S,F,m,\delta,\sigma_{i}))}{\vol(\Ball(l^{2}(d_{i},\nu_{i}))}\to 1,\]
and since
\[\inf_{i}\left(\frac{\vol(\Ball(l^{2}(d_{i},\nu_{i}))}{\vol(\Ball(l^{p}(d_{i},\nu_{i}))}\right)^{1/2d_{i}}>0,\]
we are done by the proceeding Lemma.

\end{proof}

	We can prove a nice fact in the case of the action of $\R$ on $L^{2}(\R,\overline{\mu})$ but we will need a generalization of our previous volume packing Lemma.

\begin{proposition}\label{P:volpackproj}    There is a function $\kappa(\alpha,\varepsilon)$ with 
\[\lim_{\varepsilon\to 0}\kappa(\alpha,\varepsilon)=1\]
for all $\alpha$ which has the following property.  Let $d_{i}$ be a sequence of integers going to infinity, and let $A_{i}\subseteq \Ball(l^{2}(d_{i})),$ and let $p_{i}$ be a projection on $l^{2}(d_{i}),$ so that $\tr(p_{i})$ converges. If
\[\liminf_{i\to \infty}\left(\frac{\vol(A_{i})}{\vol(\Ball(l^{2}(d_{i})))}\right)^{1/d_{i}}\geq \alpha,\]
then 
\[\liminf_{i\to \infty}\frac{1}{d_{i}\tr(p_{i})}d_{\varepsilon}(p_{i}A_{i},\|\cdot\|_{2})\geq \kappa(\alpha,\varepsilon).\]
\end{proposition}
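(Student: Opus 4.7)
The strategy is to apply the preceding volume-packing proposition (with $p=2$) to the sequence $p_i A_i \subseteq \Ball(p_i l^2(d_i))$, viewed as living in the $l^2$-space of dimension $\Tr(p_i) = d_i \tr(p_i)$. Since $c := \lim_i \tr(p_i)$ exists by hypothesis, $\Tr(p_i) \to \infty$ whenever $c>0$ (the case $c=0$ being trivial). For this reduction to yield the desired lower bound, I first need to show that $p_i A_i$ occupies a substantial fraction of the volume of the smaller ball.

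The key volume estimate comes from a simple projection argument. Splitting orthogonally $l^2(d_i) = p_i l^2(d_i) \oplus (1-p_i) l^2(d_i)$, every $a \in A_i$ decomposes as $a = p_i a + (1-p_i) a$, so $A_i$ sits inside the Cartesian product $p_i A_i \times (1-p_i) A_i$. Since $A_i \subseteq \Ball(l^2(d_i))$ the second factor lies in $\Ball((1-p_i) l^2(d_i))$, so by Fubini
\[\vol(A_i) \leq \vol(p_i A_i)\cdot \vol\bigl(\Ball((1-p_i) l^2(d_i))\bigr).\]
Dividing by $\vol(\Ball(l^2(d_i)))$ and invoking $\vol(\Ball(l^2(k))) = \pi^{k/2}/\Gamma(k/2+1)$ together with Stirling, the $1/d_i$-th root of the ratio of $l^2$-ball volumes converges to $\beta(c) := e^{H(c)/2}$, where $H(c) = -c\log c - (1-c)\log(1-c)$ is the Shannon entropy. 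This gives
\[\liminf_i \left(\frac{\vol(p_i A_i)}{\vol(\Ball(p_i l^2(d_i)))}\right)^{1/d_i} \geq \frac{\alpha}{\beta(c)},\]
and raising to the power $d_i/\Tr(p_i) \to 1/c$ yields
\[\liminf_i \left(\frac{\vol(p_i A_i)}{\vol(\Ball(p_i l^2(d_i)))}\right)^{1/\Tr(p_i)} \geq (\alpha/\beta(c))^{1/c} =: \alpha' > 0.\]

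I now apply the preceding proposition with $p=2$ and with $\Tr(p_i)$ as the ambient dimension to the sequence $p_i A_i$, obtaining
\[\liminf_i \frac{1}{\Tr(p_i)}\, d_\varepsilon(p_i A_i, \|\cdot\|_2) \geq \kappa(\alpha', \varepsilon, 2).\]
Unwinding $\Tr(p_i) = d_i \tr(p_i)$ gives precisely the desired inequality, with the new function $\kappa(\alpha,\varepsilon) := \kappa((\alpha/\beta(c))^{1/c}, \varepsilon, 2)$ still tending to $1$ as $\varepsilon \to 0$ by the property of the previous $\kappa(\cdot,\cdot,2)$.

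The main obstacle is bookkeeping in the Stirling step and handling the boundary behavior of $c$. When $c=0$ the target ``dimension'' vanishes and the statement is vacuous; when $c=1$ one has $\beta(c) = 1$ and the conclusion reduces directly to the previous proposition applied to $p_i A_i = A_i$; in the generic range $0<c<1$ one must allow $\kappa$ to depend on $c$ as well as on $\alpha$ and $\varepsilon$, which is the only genuine subtlety in matching the stated form of the proposition.
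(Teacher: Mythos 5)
Your Fubini estimate $\vol(A_{i})\leq \vol(p_{i}A_{i})\cdot\vol(\Ball((1-p_{i})l^{2}(d_{i})))$ is exactly the volume comparison the paper uses, and the Stirling bookkeeping (and your observation that $\kappa$ implicitly depends on $q=\lim\tr(p_{i})$ and degenerates as $q\to 0$) is fine. The gap is in the reduction to the preceding proposition. The quantity $d_{\varepsilon}(p_{i}A_{i},\|\cdot\|_{2})$ is not an intrinsic invariant of the set $p_{i}A_{i}$ inside the Hilbert space $p_{i}l^{2}(d_{i})$: by the definition of $\varepsilon$-containment, a subspace $V$ $\varepsilon$-contains $p_{i}A_{i}$ if each $a\in p_{i}A_{i}$ is approximated by some $v\in V$ only after cutting down by a coordinate set $C\subseteq\{1,\cdots,d_{i}\}$ with $|C|\geq(1-\varepsilon)d_{i}$. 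Here $p_{i}$ is an arbitrary orthogonal projection (in the application in Theorem \ref{T:HSlowerbound} it is $\widetilde{\sigma_{i}}(p)$ for $p\in\Proj(L(\R))$, which is not diagonal), so a unitary identification $p_{i}l^{2}(d_{i})\cong l^{2}(\Tr(p_{i}))$ does not carry the multiplication operators $\chi_{C}$ of the ambient space to coordinate cut-downs of $l^{2}(\Tr(p_{i}))$. The preceding proposition applied "internally" therefore bounds a different quantity (cut-downs by $\varepsilon\Tr(p_{i})$ of the internal coordinates), and neither quantity dominates the other; in particular "unwinding $\Tr(p_{i})=d_{i}\tr(p_{i})$" does not yield the stated inequality. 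Since allowing a cut-down makes $\varepsilon$-containment easier and hence $d_{\varepsilon}$ smaller, this is not a detail one can absorb into the constants when proving a lower bound; it is the whole point of the proposition (otherwise it would indeed be the corollary of the previous one that you describe).

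The paper avoids any such reduction and reruns the covering argument directly in $l^{2}(d_{i})$: if $V_{i}\subseteq l^{2}(d_{i})$ with $\dim V_{i}\leq\kappa\tr(p_{i})d_{i}$ $\varepsilon$-contains $p_{i}A_{i}$, then $p_{i}A_{i}$ is covered by the union over $B\subseteq\{1,\cdots,d_{i}\}$ with $|B|\leq\varepsilon d_{i}$ of sets of the form $[(1+\varepsilon)\Ball(\chi_{B^{c}}V_{i})+\varepsilon\Ball(l^{2}(B^{c}))]\times\Ball(p_{i}l^{2}(B))$, whose volumes are estimated via an $\varepsilon$-net of $(1+\varepsilon)\Ball(\chi_{B^{c}}V_{i})$ and the dimensions $\dim(p_{i}l^{2}(B))$; combining this with your Fubini inequality forces $\alpha$ below a quantity tending to $0$ as $\varepsilon\to 0$ whenever $\kappa<1$, giving the contradiction. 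To rescue your reduction you would need a transfer statement comparing ambient and internal cut-downs for a general projection $p_{i}$, and that is essentially the content of the paper's covering estimate.
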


\begin{proof} If the claim is false, then we can find $\kappa<1,$ sets $A_{i}$ as in the proposition, and subspaces $V_{i}\subseteq l^{2}(d_{i})$ with $\dim(V_{i})\leq \kappa \tr(p_{i})d_{i},$ so that $p_{i}A_{i}\subseteq_{\varepsilon}V_{i}.$ This implies that 
\[p_{i}A_{i}\subseteq \bigcup_{\substack{ B\subseteq \{1,\cdots,d_{i}\},\\|B|\leq \varepsilon d_{i}}}[(1+\varepsilon)\Ball_{\|\cdot\|_{2}}(\chi_{B^{c}}(V_{i})+\varepsilon\Ball(l^{2}(B^{c}))]\times \Ball(p_{i}l^{2}(B)).\]

	Let $q_{i}=\tr(p_{i}),q=\lim q_{i},$ also let $V(k)$ be the volume of the $k$-dimensional ball in $l^{2}(d_{i}).$ Then we have
\[\vol(p_{i}A_{i})\leq \vol[(1+\varepsilon)\Ball(p_{i}\chi_{B^{c}}(V_{i})+\varepsilon\Ball(l^{2}(B^{c}))]V(\dim(p_{i}l^{2}(B))).\]

	Let $S_{B}$ be a maximal $\varepsilon$-separated subset of $(1+\varepsilon)\Ball(p_{i}\chi_{B^{c}}(V_{i})),$ then
\[|S_{B}|\leq \left(\frac{2+2\varepsilon}{\varepsilon}\right)^{\dim (p_{i}\chi_{B^{c}}(V_{i}))}\leq \left(\frac{2+2\varepsilon}{\varepsilon}\right)^{\kappa q_{i}d_{i}}.\]

	Thus
\begin{align*}
\vol(p_{i}A_{i})&\leq \sum_{\substack{ B\subseteq \{1,\cdots,d_{i}\},\\|B|\leq \varepsilon d_{i}}}\left(\frac{2+2\varepsilon}{\varepsilon}\right)^{\kappa q_{i}d_{i}}(2\varepsilon)^{\dim(p_{i}l^{2}(B^{c}))}V(\dim(p_{i}l^{2}(B)))V(\dim(p_{i}l^{2}(B^{c}))\\
&\leq \sum_{\substack{ B\subseteq \{1,\cdots,d_{i}\},\\|B|\leq \varepsilon d_{i}}}4^{\kappa q_{i}d_{i}}2^{d_{i}q_{i}}\varepsilon^{d_{i}(1-\kappa)q_{i}}V(\dim(p_{i}l^{2}(B))V(\dim(p_{i}l^{2}(B^{c})).
\end{align*}

Thus
\[\frac{\vol(p_{i}A_{i})}{V(q_{i}d_{i})}\leq \sum_{\substack{ B\subseteq \{1,\cdots,d_{i}\},\\|B|\leq \varepsilon d_{i}}}4^{\kappa q_{i}d_{i}}2^{d_{i}q_{i}}\varepsilon^{d_{i}(1-\kappa)q_{i}}\frac{V(\dim(p_{i}l^{2}(B))V(\dim(p_{i}l^{2}(B^{c}))}{V(q_{i}d_{i})}.\]

	Now 
\[V(k)=\frac{\pi^{k}}{k!},\]
so by Striling's Formula there is a constant $C>0$ so that 
\[\frac{V(\dim(p_{i}l^{2}(B)))V(\dim(p_{i}l^{2}(B^{c}))}{V(q_{i}d_{i})}\leq C\pi^{\varepsilon d_{i}}\frac{(q_{i}d_{i})^{q_{i}d_{i}}e^{\varepsilon d_{i}}\sqrt{2\pi q_{i}d_{i}} }{(q_{i}-\varepsilon)^{(q_{i}-\varepsilon)d_{i}}\sqrt{2\pi(q_{i}-\varepsilon)d_{i}}}.\]

	Thus

\[\limsup_{i\to \infty}\left(\frac{\vol(p_{i}A_{i})}{V(q_{i}d_{i})}\right)^{1/d_{i}}\leq \frac{q^{q}}{(q-\varepsilon)^{q-\varepsilon}\varepsilon^{\varepsilon}(1-\varepsilon)^{(1-\varepsilon)}}4^{\kappa q}2^{q}\varepsilon^{(1-\kappa)q}.\]

Since $\vol(A_{i})\leq \vol(p_{i}A)V((1-q_{i})d_{i})$ we have
\begin{align*}
\alpha&\leq \frac{q^{q}}{(q-\varepsilon)^{q-\varepsilon}\varepsilon^{\varepsilon}(1-\varepsilon)^{(1-\varepsilon)}}4^{\kappa q}2^{q}\varepsilon^{(1-\kappa)q}\times\\
&\limsup_{i\to \infty}\left(\frac{\vol(q_{i}d_{i})V((1-q_{i})d_{i})}{V(d_{i})}\right)^{1/d_{i}}\\
&=\frac{(1-q)^{1-q}}{(q-\varepsilon)^{q-\varepsilon}\varepsilon^{\varepsilon}(1-\varepsilon)^{(1-\varepsilon)}}4^{\kappa q}2^{q}\varepsilon^{(1-\kappa)q}.
\end{align*}

Letting $\varepsilon\to 0,$ we obtain a contradiction.

\end{proof}

\begin{theorem}\label{T:HSlowerbound}  Let $\R$ be a discrete-measure preserving sofic equivalence relation on $(X,\mu).$ Let $\mathcal{H}$ be a separable unitary representation of $\R$ such that the action of $\R$ on $\mathcal{H}$ extends to the von Neumann algebra $L(\R).$ For any sofic approximation $\Sigma$ of $\R,$ we have
\[\dim_{\Sigma,l^{2}}(\mathcal{H},\R)=\underline{\dim}_{\Sigma,l^{2}}(\mathcal{H},\R)=\dim_{L(\R)}(\mathcal{H}).\]
\end{theorem}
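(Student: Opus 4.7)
The plan is to obtain both bounds by combining the structure theory of Hilbert $L(\R)$-modules with Theorem~\ref{T:LpDim} and Theorem~\ref{T:subaddexact}. The upper bound will come from producing a dynamically generating sequence whose supports have total measure exactly $\dim_{L(\R)}(\HS)$, and the lower bound from a subadditivity squeeze against $L^{2}(\R)^{\oplus n}$.

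For the upper bound $\dim_{\Sigma,l^{2}}(\HS,\R)\leq \dim_{L(\R)}(\HS)$, I would first invoke the standard structure theorem for Hilbert modules over the finite von Neumann algebra $L(\R)$ to write $\HS\cong \bigoplus_{i}L^{2}(\R)p_{i}$, where the $p_{i}$ are projections in $L(\R)$ with $\sum_{i}\tau(p_{i})=\dim_{L(\R)}(\HS)$. Using Murray--von Neumann comparison inside $L(\R)$ together with the diffuseness of $L^{\infty}(X,\mu)$, I would replace each $p_{i}$ by a projection $q_{i}\in L^{\infty}(X,\mu)$ with $\mu(q_{i})=\tau(p_{i})$, implemented by a partial isometry $u_{i}\in L(\R)$ with $u_{i}^{*}u_{i}=p_{i}$, $u_{i}u_{i}^{*}=q_{i}$. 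The map $\xi p_{i}\mapsto \xi u_{i}^{*}$ is a unitary left-$L(\R)$-module isomorphism $L^{2}(\R)p_{i}\to L^{2}(\R)q_{i}$, so after identification $\HS\cong \bigoplus_{i}L^{2}(\R)q_{i}$ the vectors $\widehat{q_{i}}$ (one in each summand) form a dynamically generating sequence, with $\mu(\supp \widehat{q_{i}})=\mu(q_{i})=\tau(p_{i})$. The earlier proposition giving $\dim_{\Sigma,l^{p}}(V,\R)\leq \sum_{j}\mu(\supp a_{j})$ (whose proof also yields the same bound for $\underline{\dim}$) then gives $\underline{\dim}_{\Sigma,l^{2}}(\HS,\R)\leq \dim_{\Sigma,l^{2}}(\HS,\R)\leq \dim_{L(\R)}(\HS)$.

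For the lower bound, first assume $d:=\dim_{L(\R)}(\HS)<\infty$. The structure theorem embeds $\HS$ as a closed $L(\R)$-invariant subspace of $L^{2}(\R)^{\oplus n}$ for any integer $n\geq d$; its orthogonal complement $\HS^{\perp}\subseteq L^{2}(\R)^{\oplus n}$ is an $L(\R)$-module with $\dim_{L(\R)}(\HS^{\perp})=n-d$, and is $\R$-equivariantly isomorphic to $L^{2}(\R)^{\oplus n}/\HS$. Combining the upper bound just proved (applied to $\HS^{\perp}$), subadditivity (Theorem~\ref{T:subaddexact}), and the equality $\dim_{\Sigma,l^{2}}(L^{2}(\R)^{\oplus n},\R)=\underline{\dim}_{\Sigma,l^{2}}(L^{2}(\R)^{\oplus n},\R)=n$ from Theorem~\ref{T:LpDim}, I get
\[n=\dim_{\Sigma,l^{2}}(L^{2}(\R)^{\oplus n},\R)\leq \dim_{\Sigma,l^{2}}(\HS,\R)+\dim_{\Sigma,l^{2}}(\HS^{\perp},\R)\leq \dim_{\Sigma,l^{2}}(\HS,\R)+(n-d),\]
so $\dim_{\Sigma,l^{2}}(\HS,\R)\geq d$. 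Applying the variant $\underline{\dim}(V)\leq \dim(V/W)+\underline{\dim}(W)$ from Theorem~\ref{T:subaddexact} with $W=\HS$ gives $n\leq (n-d)+\underline{\dim}_{\Sigma,l^{2}}(\HS,\R)$, hence $\underline{\dim}_{\Sigma,l^{2}}(\HS,\R)\geq d$. Together with the first paragraph this gives the claimed equalities in the finite case. If instead $\dim_{L(\R)}(\HS)=\infty$, then for every $k\in \NN$ one can choose an $L(\R)$-invariant closed subspace $\HS_{0}\subseteq \HS$ with $\dim_{L(\R)}(\HS_{0})=k$; the orthogonal projection $\HS\to \HS_{0}$ is an $\R$-equivariant surjection, so Property~1 of the dimension combined with the finite case just proved gives $\dim_{\Sigma,l^{2}}(\HS,\R)\geq \dim_{\Sigma,l^{2}}(\HS_{0},\R)=k$, and similarly for $\underline{\dim}$.

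The main obstacle is the module-theoretic step inside the upper bound, namely realizing $\HS$ as $\bigoplus_{i}L^{2}(\R)q_{i}$ with $q_{i}\in L^{\infty}(X,\mu)$ rather than only in $L(\R)$. This is clean when $(X,\mu)$ is diffuse and $\R$ ergodic; in general one must localize over the ergodic decomposition of $(X,\mu,\R)$ and handle atomic parts separately so that enough projections of the required traces are available in $L^{\infty}$. Once this reduction is in place, everything else is a formal combination of the previously established properties of the invariant.
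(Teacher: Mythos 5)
Your argument is correct in substance but takes a genuinely different route from the paper on both inequalities. For the upper bound, the paper does not move the projections $q_{j}$ into $L^{\infty}(X,\mu)$: it keeps the generating vectors $\chi_{\Delta}q_{j}$ as they are, extends the sofic approximation to the $q_{j}$, and shows directly that the tuples $(T(\chi_{\Delta}q_{1}),\dots,T(\chi_{\Delta}q_{m}))$ are $\varepsilon$-contained in $\bigoplus_{j}\sigma_{i}(q_{j})(l^{2}(d_{i}))$, a subspace of dimension $\approx d_{i}\sum_{j}\tau(q_{j})$; your route instead pays the Murray--von Neumann realization step once and then quotes the soft bound $\dim\leq\sum_{j}\mu(\supp a_{j})$. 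For the lower bound the divergence is larger: the paper reruns the probabilistic construction of Theorem \ref{T:LpDim}, restricting the random operators $T_{\xi}$ to $pL^{2}(\R,\overline{\mu})$ and extending $\Sigma$ to the commutant projection $p$ so that $T_{\xi}(\widehat{p})\approx p_{i}\xi$, and then invokes the dedicated volume-packing estimate for compressions by projections (Proposition \ref{P:volpackproj}). Your squeeze via Theorem \ref{T:subaddexact} against $L^{2}(\R,\overline{\mu})^{\oplus n}$, using the upper bound on the orthocomplement, bypasses Proposition \ref{P:volpackproj} entirely and is purely formal given Theorems \ref{T:LpDim} and \ref{T:subaddexact}; this is a real simplification, and it is not circular since Theorem \ref{T:LpDim} is proved independently.

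Two steps need more care than you give them, both in the non-ergodic case where $L(\R)$ is not a factor. First, projections of equal trace need not be equivalent there, so the passage from $p_{i}$ to $q_{i}\in L^{\infty}(X,\mu)$ must match center-valued traces; this does work (fiberwise over the ergodic decomposition the corner of $L(\R)$ is a ${\rm II}_1$ factor or a matrix algebra, and in either case the diagonal contains a projection of the prescribed central trace, selected measurably), but it is not just "diffuseness of $L^{\infty}$." Second, the claim that $\HS$ embeds into $L^{2}(\R,\overline{\mu})^{\oplus n}$ whenever $n\geq d$ is false for non-factors (the local multiplicity function of $\bigoplus_{i}L^{2}(\R)p_{i}$ can be unbounded even when $\sum_{i}\tau(p_{i})<\infty$). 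The fix is routine: run your squeeze on $\HS_{N}=\bigoplus_{i=1}^{N}L^{2}(\R)p_{i}\subseteq L^{2}(\R,\overline{\mu})^{\oplus N}$, whose complement is $\bigoplus_{i=1}^{N}L^{2}(\R)(1-p_{i})$, obtaining $\dim_{\Sigma,l^{2}}(\HS_{N},\R)\geq\sum_{i=1}^{N}\tau(p_{i})$, and then transfer to $\HS$ via the equivariant orthogonal projection $\HS\to\HS_{N}$ and let $N\to\infty$; this also subsumes your treatment of the infinite-dimensional case.
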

\begin{proof} We first show that 
\[\underline{\dim}_{\Sigma,l^{2}}(\mathcal{H},\R)\geq \dim_{L(\R)}(\mathcal{H}).\]

Our hypothesis implies that as a representation of $\R,$
\[\mathcal{H}\cong \bigoplus_{j=1}^{\infty}L^{2}(\R,\overline{\mu})q_{j},\]
with $q_{j}\in \Proj(L(\R)).$ 

	As in Theorem \ref{T:LpDim} we shall deal with the case that $\mathcal{H}=L^{2}(\R,\mu)q$ for some $q\in \Proj(L(\R)),$ it is easy to see that our proof generalizes.

	Thus $\mathcal{H}$ is unitarily equivalent to a subrepresentation of $L^{2}(R,\overline{\mu})$ so we may as well assume that it is a subrepresentation of $L^{2}(R,\overline{\mu})$. Let $p$ be the projection onto $\mathcal{H},$ we use $\widehat{p}=p\chi_{\Delta}$ to do our calculation. Fix a graphing $\Phi$ of $\R,$ and 
\[\sigma_{i}\colon *-\Alg(\Phi)\to M_{d_{i}}(\CC),\]
a sofic approximation. Let $A=*-\Alg(\Phi,p),$ we may find an extension to a sofic approximation
\[\widetilde{\sigma_{i}}\colon A\to M_{d_{i}}(\CC),\]
by perturbing elements slightly, we may assume that $p_{i}=\widetilde{\sigma_{i}}(p)$ is a projection for all $i.$ Let $T_{\xi}$ be the operator constructed in the proof of Theorem $\ref{T:LpDim}.$ Fix $F\subseteq \Phi$ finite, $m\in \NN,\delta>0.$ 

 We know that for every $F'\subseteq \Phi$ finite, $m'\in \NN,$ $\delta'>0$ that  
\[\frac{\vol(\{\xi\in \Ball(l^{2}(d_{i})):T_{\xi}\in \Hom_{\R,l^{2}}(\{\chi_{\Delta}\},F,m,\delta,\sigma_{i})\})}{\vol(\Ball(l^{2}(d_{i}))}\to 1,\]
and that $T_{\xi}(\chi_{\Delta})$ is close to $\xi.$ 

	It is easy to see that if $F',m',\delta'$ are chosen wisely then
\[\Hom_{\R,l^{2}}(\{\chi_{\Delta}\},F',m',\delta',\sigma_{i}))\big|_{pL^{2}(R,\overline{\mu})}\subseteq \Hom_{\R,l^{2}}(\{\widehat{p}\},F,m,\delta,\sigma_{i}),\]
and that $T_{\xi}(\widehat{p})$ is close to $p_{i}.$ Thus the preceding proposition proves the lower bound.

	For the upper bound, let $S=(\chi_{\Delta}q_{j})_{j=1}^{\infty}.$ Fix $\varepsilon>0,$ $m\in \NN,$ it is easy to see that if $F$ is large, and $\delta>0$ is small enough then
\[\{(T(\chi_{\Delta}q_{1}),\cdots,T(\chi_{\Delta}q_{m})):T\in \Hom_{\R,l^{p}}(S,F,m,\delta,\sigma_{i})\}\subseteq_{\varepsilon}\bigoplus_{j=1}^{m}\sigma_{i}(q_{j})\Ball(l^{2}(d_{i})),\]
as 
\[\tr(\sigma_{i}(q_{j})))\to \tau(q_{j}),\]
the desired upperbound is proved.

\end{proof}

	We close this section with a complete computation in the case of direct integrals of finite-dimensional representations. 

\begin{proposition} Let $(R,X,\mu)$ be a discrete, measure-preserving equivalnce relation. Suppose that for some $n\in \NN,$  $|\mathcal{O}_{x}|=n$ for almost for every $x\in X.$ Let $V_{x}$ be a measure-field of finite dimensional vector spaces and $\pi$ a representation of $\R$ on $V_{x}.$ Then for all $1\leq p<\infty,$ and for every sofic approximation $\Sigma$ of $\R,$
\[\dim_{\Sigma,l^{p}}\left(\int_{X}^{\oplus_{p}}V_{x}\,d\mu(x),\R\right)=\underline{\dim}_{\Sigma,l^{p}}\left(\int_{X}^{\oplus_{p}}V_{x}\,d\mu(x),\R\right)=\frac{1}{n}\int_{X}\dim(V_{x})\,d\mu(x).\]

\end{proposition}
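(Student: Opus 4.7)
The plan is to reduce to a fundamental domain where the equivalence relation becomes trivial, and then adapt the probabilistic construction from Theorem \ref{T:LpDim}. Since $\pi(x,y)\colon V_{y}\to V_{x}$ is an isomorphism for each $(x,y)\in \R$, the rank function $x\mapsto \dim V_{x}$ is orbit-constant and measurable. Because every orbit has size $n$, a standard measurable-selection argument produces a fundamental domain $A\subseteq X$ with $\mu(A)=1/n$ together with partial morphisms $\psi_{1}=\id_{A},\psi_{2},\ldots,\psi_{n}\in [[\R]]$ satisfying $\dom(\psi_{j})=A$ and $X=\bigsqcup_{j=1}^{n}\ran(\psi_{j})$. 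Distinct points of $A$ lie in distinct orbits, so $\R_{A}$ is the diagonal relation $\Delta_{A}$. The proof of Proposition \ref{P:compress} applies verbatim in this setting, since the ergodicity hypothesis there is used only to produce such a tiling, which is now furnished directly by the orbit structure; hence
\[\dim_{\Sigma,l^{p}}(V,\R)=\frac{1}{n}\dim_{\Sigma_{A},l^{p}}\Big(\int_{A}^{\oplus_{p}}V_{x}\,d\mu_{A},\Delta_{A}\Big),\]
and the same identity with $\underline{\dim}$. Using the orbit-constancy of rank and the fact that $A$ meets each orbit exactly once, $\int_{A}\dim V_{x}\,d\mu_{A}=\int_{X}\dim V_{x}\,d\mu$, so the proposition reduces to showing
\[\dim_{\Sigma_{A},l^{p}}\Big(\int_{A}^{\oplus_{p}}V_{x}\,d\mu_{A},\Delta_{A}\Big)=\underline{\dim}_{\Sigma_{A},l^{p}}\Big(\int_{A}^{\oplus_{p}}V_{x}\,d\mu_{A},\Delta_{A}\Big)=\int_{A}\dim V_{x}\,d\mu_{A}.\]

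Partition $A=\bigsqcup_{k\in \NN}A_{k}$ with $A_{k}=\{x:\dim V_{x}=k\}$ and, by measurable selection, fix sections $e^{(k)}_{1},\ldots,e^{(k)}_{k}$ that form a basis of $V_{x}$ at each $x\in A_{k}$ and vanish off $A_{k}$. Enumerate them into a bounded dynamically generating sequence $S=(a_{r})_{r}$ for the $\Delta_{A}$-representation $\int_{A}^{\oplus_{p}}V_{x}\,d\mu_{A}$. The support-measure inequality (the proposition immediately preceding Theorem \ref{T:LpDim}) already gives the upper bound $\dim_{\Sigma_{A},l^{p}}\leq \sum_{r}\mu_{A}(\supp a_{r})=\sum_{k}k\,\mu_{A}(A_{k})=\int_{A}\dim V_{x}\,d\mu_{A}$. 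For the matching lower bound on $\underline{\dim}_{\Sigma_{A},l^{p}}$, adapt the probabilistic construction in the proof of Theorem \ref{T:LpDim}: for each pair $(k,j)$ draw $\xi^{(k)}_{j}$ independently and uniformly from the unit ball of $\sigma_{A,i}(\id_{A_{k}})l^{2}(d^{A}_{i})$, and define $T_{\xi}$ on the spanning family $\{\chi_{A'}e^{(k)}_{j}:A'\subseteq A_{k}\text{ measurable}\}$ by $T_{\xi}(\chi_{A'}e^{(k)}_{j})=\sigma_{A,i}(\id_{A'})\xi^{(k)}_{j}$, extended by conditional-expectation averaging against refining partitions of the $A_{k}$ exactly as in the regular-representation case. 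Since $\Delta_{A}$ acts trivially, the equivariance conditions in the definition of $\Hom_{\Delta_{A},l^{p}}$ reduce to commutation with these projections, which is automatic. The $L^{1}$- and $L^{2}$-concentration estimates from the proof of Theorem \ref{T:LpDim} yield $\|T_{\xi}\|_{L^{p}\to l^{p}}\leq 1$ with probability tending to one (for $1\leq p\leq 2$ by interpolation), and the image $\alpha_{S}(T_{\xi})$ fills up (in the product norm) a constant fraction of $\prod_{k,j}\Ball(\sigma_{A,i}(\id_{A_{k}})l^{2}(d^{A}_{i}))$, whose total Euclidean dimension is $\sum_{k}k\,\tr(\sigma_{A,i}(\id_{A_{k}}))\cdot d^{A}_{i}\sim \big(\int_{A}\dim V_{x}\,d\mu_{A}\big)d^{A}_{i}$. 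Proposition \ref{P:volpackproj}, iterated over the block structure in the evident product form, then delivers the desired lower bound.

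The main technical obstacle is transferring the volume-packing and concentration estimates from the concrete regular-representation model $L^{p}(\R,\overline{\mu})^{\oplus n}$ treated in Theorem \ref{T:LpDim} to the abstract measurable field $(V_{x})_{x\in A}$. On each rank-stratum $A_{k}$ the bundle is measurably trivialized by $(e^{(k)}_{j})_{j}$, but the fiberwise Banach norms may vary with $x$. A compactness argument for the Banach--Mazur distance on $k$-dimensional Banach spaces, together with measurable selection, allows one to further refine $A_{k}$ into countably many pieces on which the norm on $V_{x}$ is $(1+\eta)$-close to a fixed norm on $\CC^{k}$; the resulting small distortion is then absorbed into the $\varepsilon$-approximation constants in the definition of $\underline{\dim}_{\Sigma_{A},l^{p}}$, after which the computation reduces block-by-block to the same volume-packing estimates used in Theorem \ref{T:LpDim}.
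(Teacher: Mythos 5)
Your proposal is correct in substance and runs on the same engine as the paper's proof --- random operators built from conditional expectations against refining partitions, concentration of measure for the norm bound, and the volume-packing lemmas for the lower bound --- but it organizes the reduction differently. The paper works with the full relation directly: it chooses an $n$-periodic $\alpha\in[\R]$ generating $\R$, explicitly untwists the cocycle $\pi$ to the identity via the multiplier $b(\alpha^{j}x)=\pi(x,\alpha^{j}x)$ (after assuming, without much comment, that $V_{x}=\CC^{k}$ with the Euclidean norm and $\pi$ unitary), takes $S=(e_{j}\otimes\chi_{A})_{j}$ for a fundamental domain $A$, and proves both bounds for $\R$ itself. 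You instead invoke the compression formula of Proposition \ref{P:compress} to pass to the trivial relation $\Delta_{A}$ on the fundamental domain, which disposes of the cocycle for free and reduces everything to a statement about projections; the trade-off is that Proposition \ref{P:compress} is stated only for $\R$ ergodic and $(X,\mu)$ diffuse, so you are relying on the (correct, but unproved in the paper) observation that its proof uses ergodicity only to manufacture the tiling $X=\bigsqcup_{j}\ran(\psi_{j})$, which here is supplied by the orbit structure. You should state that extension as a separate lemma rather than asserting it applies ``verbatim.'' Two respects in which your write-up is actually more careful than the paper's: you treat non-constant $x\mapsto\dim V_{x}$ by stratification rather than deferring it, and you justify the passage to a fixed norm on each stratum via Banach--Mazur compactness, where the paper simply declares the Euclidean normalization to be without loss of generality. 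The remaining soft spots --- using one independent $\xi^{(k)}_{j}$ per generating section and iterating the volume-packing estimate over the block/product structure --- are present to exactly the same degree in the paper's own treatment of $n>1$ in Theorem \ref{T:LpDim}, so they are not gaps relative to the paper's standard of rigor.
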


\begin{proof} We shall only handle the case when $\dim(V_{x})$ is almost surely constant, say equal to $k.$   The general case will follow by more or less the same proof. Without loss of generality $V_{x}=\CC^{k}$ with the Euclidean norm and $\pi(x,y)$ is a unitary for almost every $(x,y)\in \R.$ Let $\alpha\in [\R]$ be $n$-peroidic and so that up to sets of measure zero, $\R=\{(x,\alpha^{j}(x)):0\leq j\leq n-1\}.$
	Let 
\[b(\alpha^{j}x)=\pi(x,\alpha^{j}x),x\in A,0\leq j\leq n-1.\]

	Then
\[b(\alpha^{j}x)b(\alpha^{k}x)^{-1}=\pi(\alpha^{j}x,\alpha^{k}x),\]
that is 
\[b(y)b(x)^{-1}=\pi(y,x)\]
for $x,y\in \R.$ 

	Define $T\colon L^{p}(X,\mu,\CC^{k})\to L^{p}(X,\mu,\CC^{k})$
by
\[(Tf)(x)=b(x)f(x).\]

	For $\phi \in [[\R]],$ we have
\[\phi\cdot (Tf)(x)=\chi_{\ran(\phi)}(x)\pi(x,\phi^{-1}x)b(\phi^{-1}x)f(\phi^{-1}x)=\chi_{\ran(\phi)}(x)b(x)f(\phi^{-1}x)=\]
\[T(f\circ \phi^{-1})(x).\]

	Thus we may assume that $\pi(x,y)=\id$ for all $(x,y)\in \R.$ 
 Find $A\subseteq X$ so that up to sets of measure zero,
\[X=\bigsqcup_{j=0}^{n-1}\alpha^{j}(A).\]

	Let $S=(e_{j}\otimes \chi_{A})_{j=1}^{n},$ where $v\otimes f(x)=f(x)v$ for $f\colon X\to \CC$ measurable and $v\in \CC^{k}.$ Set
\[\rho_{i}(f)=\sum_{j=1}^{k}\|f_{j}\|,\]
for $f\in l^{\infty}(k,l^{p}(d_{i})).$  Fix $\Phi\subseteq [[\R]]$ containing $\{\id,\alpha,\alpha^{2},\cdots,\alpha^{n-1}\}$ and a set $\mathcal{P}$ of projections in $L^{\infty}(A,\mu)$ so that there is a sequence $\mathcal{P}_{n}$ of partitions of $A$ in $\mathcal{P}$ so that $\mathcal{P}_{n}\to \id.$ Without loss of generality, we may assume that for each $n,$ $\sigma_{i}$ is eventually a $*$-homomorphism on $W^{*}(\mathcal{P}_{n},\alpha)$ with $\tr(\sigma_{i}(\id))\to 1.$ Let $\mathcal{P}_{n}=\{B_{1,n},\cdots,B_{m_{n},n}\}.$  

	Fix $\varepsilon>0,$ and $N\in \NN.$  Suppose $F\subseteq \Phi$ is finite, and contains $\{\id,\alpha,\alpha^{2},\cdots,\alpha^{n-1},\id_{A}\},$ $m\in \NN,\delta>0.$ It is easy to see that $\alpha_{S}(\Hom_{\R,l^{p}}(S,F,m,\delta,\sigma_{i}))$ is almost contained in $l^{p}(\sigma_{i}(\id_{A})\{1,\cdots,d_{i}\})^{\oplus k}.$ Thus
\[\dim_{\Sigma,l^{p}}(F,m,\delta,\varepsilon,\rho_{i})\leq \lim_{i\to \infty}\frac{k\tr(\sigma_{i}(\id_{A})}{d_{i}}=\frac{k}{n}.\]

	Define
\[T_{\xi,N}\colon L^{p}(X,\mu,\CC^{k})\to l^{p}(d_{i})\]
by
\[T_{\xi,N}(f)=\sum_{j=0}^{n-1}\sum_{k=1}^{m_{N}}\left(\frac{1}{\mu(B_{k,N})}\int_{B_{k,N}}f\circ \alpha^{j}\,d\mu\right)\sigma_{i}(\alpha)^{j}\sigma_{i}(\id_{B_{k,N}})\xi.\]

	Simple estimates prove that 
\[\|T_{\xi,N}(f)\|_{p}^{p}\leq \sum_{j=0}^{n-1}\sum_{k=1}^{m}\left(\int_{\alpha^{j}(B_{k,N})}|f|^{p}\,d\mu\right)\frac{\|\sigma_{i}(\id_{B_{k,N}})\xi\|_{p}^{p}}{\mu(B_{k})^{p}}.\]

	As
\[\frac{1}{\vol(\Ball(l^{p}(d_{i},\nu_{i}))}\int_{\Ball(l^{p}(d_{i},\nu_{i})}\|\sigma_{i}(\id_{B_{k,N}})\xi\|_{p}\,d\mu=\tr(\sigma_{i}(\id_{B_{k,N}}))\to \mu(B_{k,N}),\]

	there are $C_{i}\subseteq \Ball(l^{p}(d_{i},\nu_{i})$ with $\liminf_{i}\frac{\vol(C_{i})}{\vol(\Ball(l^{p}(d_{i},\nu_{i}))}\geq 1/3,$ so that $\|T_{\xi,N}\|\leq 2$ if $\xi\in C_{i}.$ 

	For all large $i,$
\[T_{\xi,N}(\alpha f)=\sigma_{i}(\alpha)T_{\xi}(f).\]
\[T_{\xi,N}(\id_{B_{k,N}}f)=\sigma_{i}(\id_{B_{k,N}})T_{\xi,N}(f)\]
thus if $N$ is large enough $T_{\xi,N}\in \Hom_{\R,l^{p}}(S,F,m,\delta,\sigma_{i})$ if $\|\xi\|_{p}\leq 1.$

	As 
\[T_{\xi,N}(\chi_{A})=\sigma_{\id}(A)\xi,\] we have
\[\alpha_{S}(\Hom_{\R,l^{p}}(S,F,m,\delta,\sigma_{i}))\supseteq \{\sigma_{\id}(A)\xi:\xi \in C_{i}\}.\]
so
\[\dim_{\Sigma,l^{p}}(L^{p}(X,\mu,\CC^{k}),\R)\geq \frac{k}{n}.\]

\end{proof}

\begin{cor}\label{C:finitedim} Let $(X,\mu,\R)$ be a discrete measure-preserving equivalence relation. Suppose that $\mathcal{O}_{x}$ is infinite for almost every $x.$ Let $V$ be a measureable field of finite-dimensional vector spaces with an action of $\R.$ Then for $1\leq p<\infty,$ we have
\[\dim_{\Sigma,l^{p}}\left(\int_{X}^{\oplus_{p}}V_{x}\,d\mu(x),\R\right)=0.\]
\end{cor}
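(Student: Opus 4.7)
The plan is to bound $\dim_{\Sigma,l^p}(\int_X^{\oplus_p} V_x\,d\mu,\R)$ above by $\frac{1}{n}\int_X \dim V_x\,d\mu$ for every $n\geq 1$, via a subequivalence relation of $\R$ all of whose orbits have cardinality exactly $n$. Sending $n\to\infty$ will then force the dimension to vanish, at least when $\int_X\dim V_x\,d\mu<\infty$. The general case reduces to this via the decomposition $X=\bigsqcup_k Y_k$ along the $\R$-invariant level sets $Y_k=\{x:\dim V_x=k\}$ (invariance follows from $\pi(x,y)$ being an isomorphism), combined with subadditivity (Theorem \ref{T:subaddexact}) and the compression formula (Proposition \ref{P:compress}) to control the tail.

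First, using only that $\R$-orbits are a.e.\ infinite, I would construct an element $\alpha_n\in[\R]$ of exact order $n$ -- that is, $\alpha_n^n=\id$ and $\alpha_n^k$ is fixed-point free for $1\leq k<n$. Concretely, a measurable selection produces a measurable set $A\subseteq X$ of measure $1/n$ together with partial $\R$-morphisms $\psi_0=\id_A,\psi_1,\ldots,\psi_{n-1}$ sending $A$ onto pairwise disjoint sets whose union is $X$; gluing cyclically gives $\alpha_n$. Let $\R_n\subseteq\R$ be the subequivalence relation generated by $\alpha_n$, so every $\R_n$-orbit has exactly $n$ elements. Restricting $\Sigma$ to $*\text{-}\Alg([[\R_n]],\Proj(L^\infty(X,\mu)))$ gives a sofic approximation $\Sigma|_{\R_n}$ of $\R_n$, since all the defining asymptotic conditions pass to subalgebras. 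The representation $V$ of $\R$ restricts to one of $\R_n$, so the preceding proposition yields
\[\dim_{\Sigma|_{\R_n},l^p}\left(\int_X^{\oplus_p}V_x\,d\mu,\R_n\right)=\frac{1}{n}\int_X\dim V_x\,d\mu.\]

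The last step is the comparison $\dim_{\Sigma,l^p}(V,\R)\leq\dim_{\Sigma|_{\R_n},l^p}(V,\R_n)$. I would choose a dynamically generating sequence $S$ for $V$ as an $\R_n$-representation; since $\R_n$-orbits are contained in $\R$-orbits, $S$ is also dynamically generating as an $\R$-representation. For any finite test set $F\subseteq\Phi$ containing a set of $\R_n$-generators, the intertwining constraints defining $\Hom_{\R,l^p}(S,F,m,\delta,\sigma_i)$ imply those defining $\Hom_{\R_n,l^p}(S,F\cap [[\R_n]],m,\delta,\sigma_i|_{\R_n})$, so the former is contained in the latter. Applying $d_\varepsilon\circ\alpha_S$, then $\limsup_i$, then the infimum over $(F,m,\delta)$ gives the inequality. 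Combining with the displayed equation and letting $n\to\infty$ completes the bounded case. The main obstacle is the measurable construction of $\alpha_n$; this is a standard ergodic-theoretic fact, obtained via Feldman--Moore (realize $\R$ as the orbit relation of a countable group action containing a copy of $\ZZ/n\ZZ$) or by a direct measurable transversal argument exploiting the infinitude of $\R$-orbits.
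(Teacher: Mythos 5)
Your proposal is correct and is essentially the paper's own argument: the paper's proof is a single sentence invoking the preceding proposition together with the existence, for each $n$, of a subequivalence relation $\R_{n}\subseteq\R$ with orbits of size $n$ almost everywhere. You simply supply the details the paper leaves implicit (the measurable construction of the periodic $\alpha_{n}$, the restriction of the sofic approximation, the monotonicity $\dim_{\Sigma,l^{p}}(V,\R)\leq\dim_{\Sigma|_{\R_{n}},l^{p}}(V,\R_{n})$ coming from containment of the $\Hom$-sets, and the reduction to the case $\int_{X}\dim V_{x}\,d\mu<\infty$), all of which are consistent with the paper's intent.
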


\begin{proof} This is simple from the preceding propositionsince for every $n\in \NN,$ there is a subequivalence relation $\R_{n}\subseteq \R,$ where $\R_{n}$ has orbits of size $n$ for almost every $x\in X.$

\end{proof}

\section{$l^{p}$-Cohomology of Equivalence Relations}\label{S:cohom}

	Let $G$ be a\ locally finite graph. We let $\mathcal{E}(G)$ be the set of oriented edges of $G,$ and $E(G)$ the set of unoriented edges of $G,$ also we let  $V(G)$ be the set of vertices of $G.$  If $x,y\in V(G),$ we let $(x,y)$ be the oriented edge from $x$ to $y,$ and $[x,y]$ be the unoriented edge between $x$ and $y.$ We shall abuse notation and use $\CC^{E(G)}$ for all functions $f\colon \mathcal{E}(G)\to \CC$ such that $f(x,y)=-f(y,x)$ for all $(x,y)\in \mathcal{E}(G).$ We let $l^{p}(E(G))$ be the the functions in $\CC^{E(G))}$ so that 
\[\|f\|_{p}^{p}=\sum_{[x,y]\in E(G)}|f(x,y)|^{p}<\infty\]
(note $|f(x,y)|$ does not depend on the orientation of $[x,y].$ ) Similar remarks apply for $c_{c}(E(G))$ and other function spaces. 

	If $e=(x,y)$ is an oriented edge in $G,$ define $\mathcal{E}_{x,y}(u,v)=0$ if one of $u,v$ is not $x$ or $y,$ $1$ if $(u,v)=(x,y)$ and $-1$  if $(u,v)=(y,x).$ If $\gamma\colon \{0,\cdots,k\}\to V(G)$ is a path (that is $\gamma(j-1),\gamma(j)$ are adjacent) we think of $\gamma$ as a an element of $l^{p}(E(G))$ by having $\gamma$ correspond to
\[\sum_{j=1}^{k}\mathcal{E}_{(\gamma(j-1),\gamma(j))}.\]

	For $f\colon \mathcal{E}(G)\to \CC$ and $\gamma$ a path as above, we define
\[\int_{\gamma}f=\sum_{j=1}^{k}f(\gamma(j-1),\gamma(j)).\]

For a general graph $G,$ define $\delta\colon \CC^{V(G)}\to \CC^{E(G))},$ $\partial\colon \CC^{E(G)}\to \CC^{V(G)}$ by 
\[\delta f(v,w)=f(w)-f(v)\]
\[(\partial f)(v)=\sum_{w\mbox{ adjacent  to } v}f(w,v).\]

	Then $\delta$ and $\partial$ are dual in the following sense: if $f\in c_{c}(E(G)),$ and $g\in \CC^{V(G)}$ then 
\[\ip{\partial f,g}=-\ip{f,\delta g}\]
	where 
\[\ip{h,k}=\sum_{[x,y]\in E(G)}h(x,y)k(x,y)\]
for $h\in c_{c}(E(G)),k\in \CC^{E(G)},$ (again this is independent of orientation). Similarly if $f\in \CC^{E(G)},g\in c_{c}(V(G)),$ then
\[\ip{g,\partial f}=-\ip{\delta g,f}.\]

	Let
\[B_{1}(G)=\Span\{\gamma\in c_{c}(E(G)):\gamma\mbox{ is a loop}\}.\]
\[Z_{1}(G)=\{f\in c_{c}(E(G)):\partial f=0\}\]
\[Z^{1}(G)=\left\{f\in \CC^{E(G)}:\int_{\gamma}f=0\mbox{ for all loops $\gamma$}\right\}\]
If $f\in Z^{1}(G)$ and $v,w$ are vertices in $G,$ and $\gamma\colon \{0,\cdots,k\}\to V(G)$ is a path from $v$ to $w,$ then
\[\int_{\gamma}f\]
depends only on $v$ and $w$ since $f$ integrates to zero along all loops. We will use
\[\int_{v\to_{G}w}f,\]
for this number. Note that $Z^{1}(G)=\{\delta_{G}h:h\in \CC^{V(G)}\}.$ In fact, if $f\in Z^{1}(G),$ and $G_{j}$ are the connected components, then for   fixed $x_{j}\in V(G_{j})$ 
\[h(v)=\int_{x_{j}\to_{G}v}f,\]
for $v\in V(G_{j})$ has $\delta_{G}h=f.$ 

	Define the space of $l^{p}$-cocycles by
\[Z^{1}_{(p)}(G)=Z^{1}(G)\cap l^{p}(E(G)).\]
	We let 
\[B^{(p)}_{1}(G)=\overline{B_{1}(G)}^{\|\cdot\|_{p}},\]
be the space of $l^{p}$-boundaries.

	If $G'\subseteq G$ is a subgraph we identify $\CC^{E(G')}\subseteq \CC^{E(G)}$ by extending by zero. This allows us to make sense of all the function spaces above for $G'$  as subsets of $\CC^{E(G)}.$

\begin{definition} \emph{ Let $(\R,X,\mu)$ be a discrete measure-preserving equivalence relation. A} measurable field of graphs fibered over $\R$ \emph{is a field $\{\Phi_{x}\}_{x\in X}$ of graphs  having vertex set $\mathcal{O}_{x},$ such that $\Phi_{x}=\Phi_{y}$ for almost every $(x,y)\in \R,$ and  $\bigcup_{x\in X}\mathcal{E}(\Phi_{x})$ is a measurable subset of $\R$ which intersects the diagonal in a set of measure zero.}\end{definition}

	We set $\mathcal{E}(\Phi)=\bigcup_{x\in X}\mathcal{E}(\Phi_{x}).$

	If $\Phi$ is a measurable field of graphs fibered over $\R,$ we define the cost of $\Phi$ by (see \cite{L}, \cite{Gab1} for important properties of cost)
\[c(\Phi)=\frac{1}{2}\int_{X}\deg(x)\,d\mu(x)\]
where $\deg(x)$ is the degree of the vertex $x.$ This is also
\[\frac{1}{2}\overline{\mu}(\Phi).\]

	For any graphing $\Phi=(\phi_{j})_{j\in J},$ and for each $x\in X,$ we define a graph whose vertices are $\mathcal{O}_{x}$ and whose oriented edges are $\{(u,v):u\thicksim x,v=\phi^{\pm 1}(u),\mbox{ for some $\phi\in \Phi$}\}.$ If $\Phi_{x}$ denotes the corresponding graph note that 
\[c(\Phi)=\sum_{j\in J}\mu(\dom(\phi_{j})).\]

	This is simply the cost of the graphing $\Phi$ as previously defined. It is also straightforward to check that any measurable field of graphs over $X$ comes from a graphing of a subequivalence relation.

 If $x\to \Phi_{x}$ is a measurable field of graphs over $X,$ let $L^{p}(G)/B_{1}^{(p)}(G)$ be the $L^{p}$-direct integral of the space $l^{p}(E(\Phi_{x}))/B_{1}^{(p)}(\Phi_{x}).$ Note that $\R$ has a representation $\pi$ on $l^{p}(E(\Phi_{x}))/B_{1}^{(p)}(\Phi_{x}),$ given by $\pi(x,y)=\id$ for all $(x,y)\in \R.$

  We will show that if $\R$ has finite cost and satisfies a ``finite presentation" assumption, then $\dim_{\Sigma}(L^{p}(E(\Phi))/B_{1}^{(p)}(E(\Phi)),\R)$ does not depend on the choice of finite cost graph $\Phi.$

\begin{definition}\emph{ Let $(\R,X,\mu)$ be a discrete measure-preserving equivalence relation. Let $\Phi=(\phi_{j})_{j\in J}$ be a graphing of $\R.$ We say that $\Phi$ is} finitely presented \emph{ if there are measurable fields of loops $(L^{(j)})_{j=1}^{\infty}$ such that for almost every $x\in X,$}\end{definition}
$\Span\{L^{(j)}_{y}:y\thicksim x,j\in \NN\}=B_{1}(\Phi_{x}),$
and
\[\sum_{j=1}^{\infty}\mu(\supp L^{(j)})<\infty.\]

	We say that $\R$ is \emph{ finitely presented} if it has a finitely presented graphing.

	For example, if $\R$ is a induced by a free action of a finitely presented group, then $\R$ is finitely presented.
	
	We will proceed to show that if $\R$ is finitely presented, then in fact \mbox{every} graphing is finitely presented. It may be useful to consider the group analouge first.

	Suppose $\Gamma=\ip{s_{1},\cdots,s_{n}|r_{1},r_{2},\cdots,r_{m}}$ is a finitely presented group. And suppose that $t_{1},\cdots,t_{k}$ also generate $\Gamma.$ Choose words $w_{i}$ in $t_{1},\cdots,t_{k}$ so that 
\[w_{i}(t_{1},\cdots,t_{k})=s_{i}\]
and choose words $v_{i}$ in $s_{1},\cdots,s_{n}$ so that 
\[t_{i}=v_{i}(s_{1},\cdots,s_{n}).\]

	Set 
\[\sigma_{i}=r_{i}(w_{1},\cdots,w_{n}),\]
\[\eta_{i}=v_{i}(w_{1},\cdots,w_{n}),\]
then one can show that
\[\Gamma=\ip{t_{1},t_{2},\cdots,t_{k}|\sigma_{1},\cdots,\sigma_{m},\eta_{1}t_{1}^{-1},\eta_{2}t_{2}^{-1},\cdots,t_{k}a_{k}^{-1}}.\]

	 Graphically, choosing words $w_{i},v_{i}$ as above corresponds to finding a path in $\Cay(\Gamma,\{t_{1},\cdots,t_{k}\})$ from $e$ to $s_{i}$ and vice versa. So we will simply express the above proof in the language of graphs and this will allow us to generalize to the case of equivalence relations.

\begin{lemma} Let $G,G'$ be two connected locally finite graphs with the same vertex set. Choose paths $\{\sigma_{y,z}\}_{(y,z)\in \mathcal{E}(G)}$ in $G'$ from $y$ to $z$ such that $\sigma_{yz}=-\sigma_{zy}.$ Similarly, choose paths $\{\gamma_{v,w}\}_{(v,w)\in \mathcal{E}(G')}$ in $G$ from $v$ to $w$ such that $\gamma_{vw}=-\gamma_{wv}.$ Suppose that $\{L_{j}:j\in J\}$ is a family of loops in $G$ so that 
\[B_{1}(G)=\Span\{L_{j}:j\in J\}.\]

	Define $T\colon c_{c}(E(G))\to c_{c}(E(G')),$ by
\[Tf=\sum_{[y,z]\in E(G)}f(y,z)\sigma_{yz},\]
	Then 
\[B_{1}(G')=\Span\{T(L_{j}):j\in J\}+\Span\{T(\gamma_{v,w})-\mathcal{E}_{(v,w)}:(v,w)\in \mathcal{E}(G')\}.\]

\end{lemma}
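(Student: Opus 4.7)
The plan is to introduce the companion map $T' \colon c_c(E(G')) \to c_c(E(G))$ defined by $T'f = \sum_{[v,w] \in E(G')} f(v,w) \gamma_{vw}$, and to observe that both $T$ and $T'$ send loops to loops. Concretely, since each $\sigma_{yz}$ is a walk in $G'$ from $y$ to $z$, the map $T$ replaces each oriented edge $(y,z)$ appearing in a walk in $G$ by a walk in $G'$ with the same endpoints; linearity and the sign conventions $\sigma_{yz} = -\sigma_{zy}$, $\mathcal{E}_{(y,z)} = -\mathcal{E}_{(z,y)}$ then guarantee that any loop in $G$ is sent by $T$ to a loop in $G'$, and in particular $T(L_j) \in B_1(G')$. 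The symmetric statement holds for $T'$. Moreover for each edge $(v,w) \in \mathcal{E}(G')$, the chain $T(\gamma_{vw})$ is a walk in $G'$ from $v$ to $w$, so $T(\gamma_{vw}) - \mathcal{E}_{(v,w)}$ is a closed walk and hence lies in $B_1(G')$. This establishes the containment $\supseteq$.

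For the reverse containment, the main step is to verify the identity
\[
\alpha \;=\; T(T'(\alpha)) \;-\; \sum_{[v,w] \in E(G')} \alpha(v,w)\bigl(T(\gamma_{vw}) - \mathcal{E}_{(v,w)}\bigr)
\]
for every $\alpha \in c_c(E(G'))$. This is obtained by writing $\alpha = \sum_{[v,w] \in E(G')} \alpha(v,w)\mathcal{E}_{(v,w)}$ (this sum is finite since $\alpha$ has compact support, and it is independent of the choice of orientations on the edges) and then expanding $T(T'(\alpha)) = \sum_{[v,w]} \alpha(v,w) T(\gamma_{vw})$. The second term on the right-hand side already lies in $\Span\{T(\gamma_{v,w}) - \mathcal{E}_{(v,w)} : (v,w) \in \mathcal{E}(G')\}$, so it suffices to check that $T(T'(\alpha)) \in \Span\{T(L_j)\}$ whenever $\alpha \in B_1(G')$.

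This last fact is precisely where the hypothesis on the $L_j$ enters. If $\alpha \in B_1(G')$, then by the observation in the first paragraph $T'(\alpha) \in B_1(G)$, and by assumption $B_1(G) = \Span\{L_j : j \in J\}$, so $T'(\alpha) = \sum_j c_j L_j$ is a finite linear combination. Applying $T$ and using its linearity gives $T(T'(\alpha)) = \sum_j c_j T(L_j) \in \Span\{T(L_j)\}$, which completes the argument.

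There is no substantive obstacle here beyond bookkeeping; the only delicate point is making the informal ``replace each edge of a walk by a path with the same endpoints'' description precise at the level of the orientation-antisymmetric function spaces $\CC^{E(G)}$ and $\CC^{E(G')}$. That precision is guaranteed automatically by the antisymmetry conventions $\sigma_{yz} = -\sigma_{zy}$ and $\gamma_{vw} = -\gamma_{wv}$ built into the statement, so once one writes out $T \circ T'$ carefully, the identity above follows by direct computation.
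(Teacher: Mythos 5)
Your proof is correct, but it takes a genuinely different route from the paper's. The paper argues by duality: it equips $c_{c}(E(G))$ with the direct limit topology (so that every subspace is closed and the dual is $\CC^{E(G)}$), takes an arbitrary $g\in \CC^{E(G')}$ annihilating the proposed spanning set, computes the transpose $T^{t}g(y,z)=\int_{\sigma_{yz}}g$, uses $\int_{L_{j}}T^{t}g=0$ to produce a potential $h$ with $\delta_{G}h=T^{t}g$, checks via the edges $(v,w)$ that $\delta_{G'}h=g$, and concludes $g\in Z^{1}(G')$ so that Hahn--Banach gives $B_{1}(G')\subseteq \Span\{T(L_{j})\}+\Span\{T(\gamma_{v,w})-\mathcal{E}_{(v,w)}\}$. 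You instead work entirely on the primal side: the companion map $T'$ and the retraction identity $\alpha=T(T'(\alpha))-\sum_{[v,w]}\alpha(v,w)\bigl(T(\gamma_{vw})-\mathcal{E}_{(v,w)}\bigr)$ give an explicit decomposition of each $\alpha\in B_{1}(G')$, using only that $T$ and $T'$ carry loops to loops. Your version is more elementary --- it sidesteps the topology on $c_{c}$, the closedness of subspaces, and the identification of the dual --- and it is constructive, exhibiting the coefficients rather than inferring spanning from an annihilator computation; it also explicitly records the easy containment $\supseteq$, which the paper leaves implicit. The paper's duality computation has the side benefit of making visible the relation $T^{t}\delta_{G'}=\delta_{G}(\cdot)$ restricted to potentials, which is in the spirit of how the lemma is later applied, but nothing in the application requires it. Both arguments are sound.
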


\begin{proof} Note that 
\[c_{c}(E(G))=\bigcup_{F\subseteq E(G) \mbox{ finite }}c_{c}(F),\]
give $c_{c}(E(G))$ the direct limit topology with respect to this filtration. That is, if $f_{n}\in c_{c}(E(G))$ then $f_{n}\to f\in c_{c}(E(G))$ if and only if there is a finite subset $F\subseteq E(G)$ so that $\supp\{f_{n}\}\subseteq F$ and $f_{n}\to f$ pointwise. It is easy to see that every subspace of $c_{c}(E(G))$ is closed in this topology, and that $c_{c}(E(G))^{*}=\CC^{E(G)}$ with respect to the pairing
\[\ip{f,g}=\sum_{[y,z]\in E(G)}f(y,z)g(y,z)\]
(the above sum being independent of the orientation of edges).

 Let $g\in \CC^{E(G')}$ be such 
\[\int_{T(L_{j})}g=0,g(v,w)=\int_{(T(\gamma_{v,w}))}g.\]

	Note that 
\[T^{t}\colon \CC^{E(G')}\to \CC^{E(G)},\]
is given by
\[T^{t}f(y,z)=\int_{\sigma_{yz}}f,\]

	Thus
\[\int_{L_{j}}T^{t}g=\int_{T(L_{j})}g=0,\]
for all $j.$ This implies that there is a $h\colon V(G)\to \CC$ such that $\delta_{G}h=T^{t}g.$ Note that for all $(v,w)\in \mathcal{E}(G'),$
\[h(w)-h(v)=\int_{\gamma_{vw}}T^{t}g=\int_{T(\gamma_{v,w})}g=g(v,w).\]

	Therefore,
\[\delta_{G'}h=g.\]

	This implies that $g\in Z^{1}(G').$ The Hahn-Banach Theorem now completes the proof.

\end{proof}
\begin{lemma} Let $(\R,X,\mu)$ be a discrete measure-preserving equivalence relation, if $\R$ is finitely presented then every finite cost graphing of $\R$ is finitely presented.
\end{lemma}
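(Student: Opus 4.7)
The plan is to apply the preceding graph-theoretic lemma fiberwise, taking $G = \Phi_x$ and $G' = \Phi'_x$ for a.e.\ $x$. Let $\Phi = (\phi_j)_{j \in J}$ be the finitely presented graphing of $\R$ with measurable loop fields $(L^{(j)})_{j=1}^{\infty}$ fiberwise spanning $B_{1}(\Phi_x)$ and satisfying $\sum_j \mu(\supp L^{(j)}) < \infty$, and let $\Phi' = (\phi'_k)_{k \in K}$ be any other finite cost graphing of $\R$.

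The first step is to measurably select the two families of paths required by the lemma. Since $\Phi$ generates $\R$, a standard measurable selection -- stratify $\dom(\phi'_k)$ by the minimum word length in $\Phi \cup \Phi^{-1} \cup \{\id\}$ needed to send $x$ to $\phi'_k(x)$, then by the specific reduced word achieving that length -- produces a measurable path field $\gamma^{(k)}$ in $\Phi$ from $x$ to $\phi'_k(x)$ on $\dom(\phi'_k)$. Using that $\Phi'$ generates $\R$, the symmetric construction yields measurable path fields $\sigma^{(j)}$ in $\Phi'$ from $x$ to $\phi_j(x)$ on $\dom(\phi_j)$. After the obvious antisymmetrisation under edge reversal, these serve as the $\sigma$'s and $\gamma$'s of the preceding lemma at each fiber.

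Next, define the fiberwise map $T\colon c_c(E(\Phi_x)) \to c_c(E(\Phi'_x))$ by sending each edge $(y, \phi_j^{\pm 1} y)$ to the $\sigma^{(j)}$-path, and set
\[
M^{(k)} = T(\gamma^{(k)}) - \mathcal{E}_{\graph(\phi'_k)},
\]
which is a measurable field of loops in $\Phi'$ supported in $\dom(\phi'_k)$. Since every edge of $\Phi'_x$ is of the form $\graph(\phi'_k^{\pm 1})_y$ for some $k$ and some $y \thicksim x$, the preceding lemma applied pointwise yields
\[
\Span\{T(L^{(j)})_y,\, M^{(k)}_y : j \in \NN,\, k \in K,\, y \thicksim x\} = B_{1}(\Phi'_x)
\]
for a.e.\ $x$, which is precisely the dynamical spanning condition in the definition of finite presentation for $\Phi'$.

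Summability is then routine: because $T$ acts fiberwise over $X$, one has $\supp T(L^{(j)}) \subseteq \supp L^{(j)}$, while by construction $\supp M^{(k)} \subseteq \dom(\phi'_k)$, so
\[
\sum_j \mu(\supp T(L^{(j)})) + \sum_k \mu(\supp M^{(k)}) \leq \sum_j \mu(\supp L^{(j)}) + c(\Phi') < \infty,
\]
and the two countable families merge into a single sequence indexed by $\NN$. The main obstacle will be the measurable selection step, where both generation hypotheses must be invoked simultaneously to produce genuinely measurable paths that are compatible with edge reversal; once that is in place, everything else is bookkeeping driven by the preceding graph-theoretic lemma.
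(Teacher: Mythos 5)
Your proposal is correct and follows essentially the same route as the paper: apply the preceding graph-theoretic lemma fiberwise with measurably selected path fields in both directions, take the new loop system to be the $T$-images of the old loops together with the loops $T(\gamma^{(k)})-\mathcal{E}_{\graph(\phi_k')}$, and bound the support sums by $\sum_j\mu(\supp L^{(j)})$ and $c(\Phi')$ respectively. The only cosmetic difference is that the paper indexes edges via auxiliary disjoint families of partial maps $(\mathcal{E}_k),(\mathcal{D}_\alpha)$ rather than the graphing generators directly, and your sketch of the measurable path selection is if anything slightly more explicit than the paper's.
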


\begin{proof} Let $\Phi$ be finitely presented and let $L^{(j)}$ be as in the definition of finitely presented. Let $(\mathcal{E}_{k})_{k\in K}$ be a countable family of  partially defined measurable functions from $X$ to $X$  with the following properties:

\begin{list}{\arabic{pcounter}:~}{\usecounter{pcounter}}
\item $\mathcal{E}(\Phi)=\bigcup_{k\in K}\{(x,\mathcal{E}_{k}(x)):x\in \dom(\mathcal{E}_{k})\}\cup \{(\mathcal{E}_{k}(x),x):x\in \dom(\mathcal{E}_{k})\}$
\item for all $j,k$ $\{(x,\mathcal{E}_{j}(x)):x\in \dom(\mathcal{E}_{j})\}\cap \{(\mathcal{E}_{k}(x),x):x\in\dom(\mathcal{E}_{k})\}=\varnothing$
\item for all $j\ne k$,$ \{(x,\mathcal{E}_{j}(x)):x\in \dom(\mathcal{E}_{j})\}\cap \{(x,\mathcal{E}_{k}(x)):x\in\dom(\mathcal{E}_{k})\}=\varnothing$
\end{list}

	For each $k\in K,$ let $\sigma^{(k)}_{x}$ be a measurable family of paths  in $\Psi$ so that for almost every $x,$ $\sigma^{(k)}_{x}$ is a path form $x$ to $\mathcal{E}_{k}(x).$ Define
\[T_{x}\colon c_{c}(E(\Phi_{x}))\to c_{c}(E(\Psi_{x}))\]
by
\[T_{x}f=\sum_{y\thicksim x}\sum_{k\in K:y\in \dom(\mathcal{E}_{k})}f(y,\mathcal{E}_{k}(y))\gamma^{(k)}_{y}.\]

	Then $T_{x}=T_{y}$ if $y\thicksim x.$ Let $(\mathcal{D}_{\alpha})_{\alpha\in A}$ be a countable family of partially defined measurable functions from $X$ to $X$ in $\Psi$ following properties:
\begin{list}{\arabic{pcounter}:~}{\usecounter{pcounter}}
\item $\mathcal{E}(\Psi)=\bigcup_{k\in A}\{(x,\mathcal{D}_{k}(x)):x\in \dom(\mathcal{D}_{k})\}\cup \{(\mathcal{D}_{k}(x),x):x\in \dom(\mathcal{D}_{k})\}$
\item for all $j,k$ $\{(x,\mathcal{D}_{j}(x)):x\in \dom(\mathcal{D}_{j})\}\cap \{(\mathcal{D}_{k}(x),x):x\in\dom(\mathcal{E}_{k})\}=\varnothing$
\item for all $j\ne k$,$ \{(x,\mathcal{D}_{j}(x)):x\in \dom(\mathcal{D}_{j})\}\cap \{(x,\mathcal{D}_{k}(x)):x\in\dom(\mathcal{D}_{k})\}=\varnothing$
\end{list}

	Let $\gamma^{(\alpha)}_{x}$ be a measurable family of paths in $\Phi$ so that for almost $x,$ $\gamma^{(\alpha)}_{x}$ is a path from $x$ to $\mathcal{D}_{\alpha}(x).$ From the preceding lemma, it then follows that $(T_{x}L^{(j)}_{x})_{j=1}^{\infty},(T_{x}(\gamma^{(\alpha)}_{x})-\mathcal{E}_{(x,\mathcal{D}^{(\alpha)}_{x})}$ is a measurable family of loops in $\Psi_{x}$ whose $\R$-translates span $B_{1}(\Psi_{x}).$ Further,
\[\sum_{j}\mu(\supp T( L^{(j)}))\leq \sum_{j=1}^{\infty}\mu(\supp L^{(j)})<\infty,\]
\[\sum_{\alpha}\mu(\supp (T(\gamma^{(\alpha)})-\mathcal{E}_{(\cdot,\mathcal{D}_{\alpha}(\cdot))})\leq c(\Psi)<\infty.\]

\end{proof}

	We now proceed to prove  that $\dim_{\Sigma,l^{p}}(L^{p}(E(\Phi))/B_{1}^{(p)}(\Phi),\R)$ does not depend upon the choice of finite cost graphing when $\R$ is finitely presented. Our methods are similar to Gaboriau's in \cite{Gab2}. We must be more careful, however, since we do not have monotonicity of our dimension. We will need the following ``Continuity Lemma."

\begin{lemma}\label{L:finprescontinuity} Fix $1\leq p,q<\infty.$ Let $(\R,X,\mu)$ be as before, with $\R$ finitely presented. If $\Phi$ is a finite cost graphing of $\R,$ and $\Phi^{(n)}$ is an increasing sequence of subgraphs of $\R$ so that 
\[\Phi_{x}=\bigcup_{n=1}^{\infty}\Phi^{(n)}_{x}\]
for almost every $x,$ then
\[\dim_{\Sigma,l^{q}}(L^{p}(E(\Phi^{(n)}))/B_{1}^{(p)}(\Phi^{(n)}),\R)\to \dim_{\Sigma}(L^{p}(E(\Phi))/B_{1}^{(p)}(\Phi),\R),\]
\[\dim_{\Sigma,l^{q}}(B_{1}^{(p)}(E(\Phi))/B_{1}^{(p)}(\Phi^{(n)}),\R)\to 0,\]
\[\underline{\dim}_{\Sigma,l^{q}}(L^{p}(E(\Phi^{(n)}))/B_{1}^{(p)}(\Phi^{(n)}),\R)\to \underline{\dim}_{\Sigma}(L^{p}(E(\Phi))/B_{1}^{(p)}(\Phi),\R),\]
\[\underline{\dim}_{\Sigma,l^{q}}(B_{1}^{(p)}(E(\Phi))/B_{1}^{(p)}(\Phi^{(n)}),\R)\to 0.\]

\end{lemma}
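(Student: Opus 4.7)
The plan is to establish two ``support estimates'' and then combine them via subadditivity. The structural key is that the disjoint splitting $E(\Phi)=E(\Phi^{(n)})\sqcup (E(\Phi)\setminus E(\Phi^{(n)}))$, together with the orbit-constancy of the graphs, yields an isometric $\R$-equivariant direct sum
\[
L^p(E(\Phi))=L^p(E(\Phi^{(n)}))\oplus L^p(E(\Phi)\setminus E(\Phi^{(n)}));
\]
since $B_1^{(p)}(\Phi^{(n)})\subseteq L^p(E(\Phi^{(n)}))$, this descends to
\[
L^p(E(\Phi))/B_1^{(p)}(\Phi^{(n)})=\bigl(L^p(E(\Phi^{(n)}))/B_1^{(p)}(\Phi^{(n)})\bigr)\oplus L^p(E(\Phi)\setminus E(\Phi^{(n)}))
\]
as $\R$-representations.

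The first estimate, $\varepsilon_1(n):=\dim_{\Sigma,l^q}(L^p(E(\Phi)\setminus E(\Phi^{(n)})),\R)\to 0$, follows from the support-bound proposition: parameterize $\mathcal{E}(\Phi)\setminus \mathcal{E}(\Phi^{(n)})$ by countably many partial measurable bijections $\psi_k\colon X\to X$, use $\{\chi_{\graph(\psi_k)}\}_k$ as a dynamically generating sequence, and note that $\sum_k\mu(\dom\psi_k)=\overline{\mu}(\mathcal{E}(\Phi)\setminus \mathcal{E}(\Phi^{(n)}))=2(c(\Phi)-c(\Phi^{(n)}))\to 0$ by finite cost of $\Phi$. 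The second estimate, $\varepsilon_2(n):=\dim_{\Sigma,l^q}(B_1^{(p)}(\Phi)/B_1^{(p)}(\Phi^{(n)}),\R)\to 0$, is where finite presentation enters. Let $(L^{(j)})_{j=1}^{\infty}$ be a measurable family of loops as in the definition of finite presentation, rescaled to be uniformly $L^p$-bounded; since their $\R$-translates span $B_1(\Phi_x)$ fiberwise, the classes $[L^{(j)}]$ form a dynamically generating sequence of the quotient. The key claim is
\[
\supp[L^{(j)}]\subseteq \{x:L^{(j)}_x\notin B_1(\Phi^{(n)}_x)\}\pmod{\text{null}},
\]
because on the complement $L^{(j)}_x$ is itself a loop in $\Phi^{(n)}_x$, and partitioning this complement into the countably many measurable pieces on which $L^{(j)}_x$ uses a given finite set of edges of $\Phi^{(n)}$ writes $\id_A L^{(j)}$ as a norm-convergent sum of genuine elements of $B_1(\Phi^{(n)})\subseteq B_1^{(p)}(\Phi^{(n)})$. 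For each fixed $j$, since $L^{(j)}_x$ uses only finitely many edges of $\Phi_x=\bigcup_m\Phi^{(m)}_x$, the measure of the exceptional set tends to $0$ as $n\to\infty$, dominated by the summable $\mu(\supp L^{(j)})$; dominated convergence then forces $\sum_j\mu(\supp[L^{(j)}])\to 0$.

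Writing $D_n,M_n,D$ for the three $\dim_{\Sigma,l^q}$'s in the first line of the lemma, the split decomposition and Property~1 (applied to the equivariant projection onto the first summand) give $D_n\leq M_n$, while subadditivity (Theorem~\ref{T:subaddexact}) gives $M_n\leq D_n+\varepsilon_1(n)$. The short exact sequence
\[
0\to B_1^{(p)}(\Phi)/B_1^{(p)}(\Phi^{(n)})\to L^p(E(\Phi))/B_1^{(p)}(\Phi^{(n)})\to L^p(E(\Phi))/B_1^{(p)}(\Phi)\to 0,
\]
combined with Property~1 on its quotient and subadditivity, yields $D\leq M_n\leq D+\varepsilon_2(n)$. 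Hence $|D_n-D|\leq \varepsilon_1(n)+\varepsilon_2(n)\to 0$, proving the first convergence. The $\underline{\dim}$ statement follows by the identical argument with Properties~4 and 5 in place of Property~3, using $\underline{\dim}\leq \dim$ to control the error terms. The second and fourth convergences of the lemma are exactly the second support estimate (and its trivial $\underline{\dim}$ consequence).

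The principal technical obstacle is the support claim for $[L^{(j)}]$: reducing the $L^p$-membership $\id_A L^{(j)}\in B_1^{(p)}(\Phi^{(n)})$ to the fiberwise loop-membership $L^{(j)}_x\in B_1(\Phi^{(n)}_x)$ via a measurable selection along the finite edge sets appearing in $L^{(j)}_x$. Once this is in place and the direct-sum splitting is identified as $\R$-equivariant, the remainder is a routine combination of the established dimension formalism.
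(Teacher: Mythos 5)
Your proposal is correct and follows essentially the same route as the paper: one inequality via the (split) inclusion $L^{p}(E(\Phi^{(n)}))\hookrightarrow L^{p}(E(\Phi))$ and a cost bound on the complementary edges, the other via the exact sequence $0\to B_{1}^{(p)}(\Phi)/B_{1}^{(p)}(\Phi^{(n)})\to L^{p}(E(\Phi))/B_{1}^{(p)}(\Phi^{(n)})\to L^{p}(E(\Phi))/B_{1}^{(p)}(\Phi)\to 0$, with finite presentation and dominated convergence killing the kernel term. Your explicit justification of the support estimate $\supp[L^{(j)}]\subseteq\{x:L^{(j)}_{x}\notin B_{1}(\Phi^{(n)}_{x})\}$ fills in a step the paper leaves implicit, but the overall argument is the same.
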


\begin{proof} Let $E\colon L^{p}(E(\Phi^{(n)})) \to L^{p}(E(\Phi))$ be defined by extension by zero. It is easy to see that $E$ descends to a well-defined map, still denoted $E$
\[L^{p}(E(\Phi^{(n)}))/B_{1}^{(p)}(\Phi^{(n)}) \to L^{p}(E(\Phi))/B_{1}^{(p)}(\Phi).\]
By subadditivity under exact sequences,
\begin{align*}
\dim_{\Sigma,l^{q}}(L^{p}(E(\Phi))/B_{1}^{(p)}(\Phi),\R)&\leq \dim_{\Sigma,l^{q}}L^{p}(E(\Phi^{(n)}))/B_{1}^{(p)}(\Phi^{(n)}),\R)\\
&+\dim_{\Sigma,l^{q}}([L^{p}(E(\Phi^{(n)}))/B_{1}^{(p)}(\Phi^{(n)})]/\overline{\im E},\R),
\end{align*}
and it is easy to see that there is a $\R$-equivariant map
\[L^{p}(E(\Phi\setminus \Phi^{(n)}))\to [L^{p}(E(\Phi^{(n)}))/B_{1}^{(p)}(\Phi^{(n)})]/\overline{\im E}\]
with dense image. Thus
\begin{align*}
\dim_{\Sigma,l^{q}}(L^{p}(E(\Phi))/B_{1}^{(p)}(\Phi),\R)&\leq \dim_{\Sigma,l^{q}}(L^{p}(E(\Phi^{(n)}))/B_{1}^{(p)}(\Phi^{(n)}),\R)\\
&+c(\Phi\setminus \Phi^{(n)}).
\end{align*}
 And this proves one side of the necessary inequality.

	For the opposite inequality, consider the restriction map
\[R\colon L^{p}(E(\Phi))\to L^{p}(E(\Phi^{(n)})),\]
 then $R$ descends to an  surjective $\R$-equivariant map
\[L^{p}(E(\Phi))/B_{1}^{(p)}(\Phi^{(n)})\to L^{p}(E(\Phi))/B_{1}^{(p)}(\Phi^{(n)}).\]

	Thus

\[\dim_{\Sigma,l^{q}}(L^{p}(E(\Phi))/B_{1}^{(p)}(\Phi^{(n)}),\R)\leq \dim_{\Sigma}(L^{p}(E(\Phi))/B_{1}^{(p)}(\Phi^{(n)}),\R).\]

Considering the exact sequence

\[\begin{CD} 0@>>> \frac{B^{(p)}_{1}(\Phi)}{B^{(p)}_{1}(\Phi^{(n)})} @>>> \frac{L^{p}(\Phi)}{B^{(p)}_{1}(\Phi^{(n)})} @>>> \frac{L^{p}(\Phi)}{B^{(p)}_{1}(\Phi)} @>>> 0,\end{CD}\]
we find that 
\begin{align*}
\dim_{\Sigma,l^{q}}(L^{p}(E(\Phi))/B_{1}^{(p)}(\Phi^{(n)}),\R)&\leq \dim_{\Sigma,l^{q}}\left(\frac{B^{(p)}_{1}(\Phi)}{B^{(p)}_{1}(\Phi^{(n)})},\R\right)\\
&+\dim_{\Sigma,l^{q}}\left(\frac{L^{p}(E(\Phi))}{B^{(p)}_{1}(\Phi)},\R\right) .
\end{align*}

	So it suffices to prove the second limiting statement. For this, since $\R$ is finitely presented we can find measurable fields of loops $(L^{(j}))_{j=1}^{\infty}$ which generate $B_{1}^{(p)}(\Phi)$ and so that 
\[\sum_{j=1}^{\infty}\mu(\supp L^{(j)})<\infty.\]

	Since
\[\dim_{\Sigma,l^{q}}(L^{p}(E(\Phi))/B_{1}^{(p)}(\Phi^{(n)}),\R)\leq \sum_{j=1}^{\infty}\mu(\{x:L^{(j)}_{x}\mbox{ is not supported in } \Phi^{(n)}_{x}\})\]
and
\[\mu(\{x:L^{(j)}_{x}\mbox{ is not supported in } \Phi^{(n)}_{x}\})\to 0,\]
\[\mu(\{x:L^{(j)}_{x}\mbox{ is not supported in } \Phi^{(n)}_{x}\})\leq \mu(\supp L^{(j)}),\]
we find that 
\[\dim_{\Sigma,l^{q}}(L^{p}(\Phi)/B_{1}^{(p)}(\Phi^{(n)}),\R)\to 0,\]
as desired.

\end{proof}

\begin{theorem} Fix $1\leq p,q<\infty.$ Let $(\R,X,\mu)$ be as before with $\R$ finitely presented and of  finite cost. Let $\Phi,\Psi$ be two finite cost graphings of $\R.$ Then
\[\dim_{\Sigma,l^{q}}(L^{p}(E(\Phi))/B_{1}^{(p)}(\Phi),\R)=\dim_{\Sigma,l^{q}}(L^{p}(E(\Psi))/B_{1}^{(p)}(\Psi),\R),\]
\[\underline{\dim}_{\Sigma,l^{q}}(L^{p}(E(\Phi))/B_{1}^{(p)}(\Phi),\R)=\underline{\dim}_{\Sigma,l^{q}}(L^{p}(E(\Psi))/B_{1}^{(p)}(\Psi),\R),\]
\end{theorem}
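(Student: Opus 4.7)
The plan is to reduce to the case where one graphing contains the other, then apply Lemma~\ref{L:finprescontinuity} repeatedly to reduce to adding a single partial morphism with uniformly bounded-length connecting path, where an explicit chain-homotopy yields the required $\R$-equivariant isomorphism.

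\textit{Reduction to containment.} The union $\Theta := \Phi \cup \Psi$ is a finite cost graphing of $\R$ with $c(\Theta) \leq c(\Phi) + c(\Psi)$ and inherits finite presentation from $\R$ via the preceding lemma. By symmetry, it suffices to prove the statement with $\Phi$ replaced by $\Theta$ (and separately with $\Psi$ replaced by $\Theta$), so I may assume $\Phi \subseteq \Psi$. Enumerate $\Psi \setminus \Phi$ by partial morphisms $\phi_1, \phi_2,\ldots$ and set $\Psi^{(k)} = \Phi \cup \{\phi_{1},\ldots,\phi_{k}\}$, so $\Psi^{(k)} \uparrow \Psi$. Lemma~\ref{L:finprescontinuity} then gives
\[\dim_{\Sigma,l^{q}}(L^{p}(E(\Psi^{(k)}))/B_{1}^{(p)}(\Psi^{(k)}),\R) \to \dim_{\Sigma,l^{q}}(L^{p}(E(\Psi))/B_{1}^{(p)}(\Psi),\R),\]
and the same for $\underline{\dim}$, so by induction it is enough to prove invariance when passing from a finitely presented finite-cost graphing $\Upsilon \supseteq \Phi$ to $\Upsilon \cup \{\phi\}$ for a single partial morphism $\phi\colon A \to B$.

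\textit{Path-length truncation.} Since $\Upsilon$ already generates $\R$, for a.e.~$x\in A$ there is a $\Upsilon$-path from $x$ to $\phi(x)$. Stratifying $A = \bigcup_{N\geq 1} A_{N}$ with $A_{N} = \{x\in A: d_{\Upsilon}(x,\phi(x))\leq N\}$ and applying Lemma~\ref{L:finprescontinuity} once more to $\Upsilon \cup \phi|_{A_{N}} \uparrow \Upsilon \cup \{\phi\}$ reduces to the case of uniformly bounded path length $N$. Measurably choose $\Upsilon$-paths $\gamma_{x}$ of length $\leq N$ from $x$ to $\phi(x)$ and define
\begin{align*}
\iota &\colon L^{p}(E(\Upsilon)) \to L^{p}(E(\Upsilon\cup\{\phi\})) \quad \text{(extension by zero),}\\
T &\colon L^{p}(E(\Upsilon\cup\{\phi\})) \to L^{p}(E(\Upsilon)), \quad Tf = f|_{E(\Upsilon)} + \sum_{x\in A} f(x,\phi(x))\,\gamma_{x}.
\end{align*}
Both maps are $\R$-equivariant and carry loops to loops (since $\gamma_{x}$ has the same endpoints as the replaced edge), so they descend to the quotients by $l^{p}$-boundaries. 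A direct computation gives $T\circ\iota = \mathrm{id}$ on $L^{p}(E(\Upsilon))$, while
\[(\iota\circ T - \mathrm{id})(f) = -\sum_{x\in A} f(x,\phi(x))\bigl(\mathcal{E}_{(x,\phi(x))} - \gamma_{x}\bigr)\]
is a measurable combination of the $\Upsilon\cup\{\phi\}$-loops $\mathcal{E}_{(x,\phi(x))} - \gamma_{x}$ and hence lies in $B_{1}^{(p)}(\Upsilon\cup\{\phi\})$. Thus $\iota$ descends to a bounded $\R$-equivariant bijection between the quotients, preserving both $\dim_{\Sigma,l^{q}}$ and $\underline{\dim}_{\Sigma,l^{q}}$ by property~1 of dimension.

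\textit{The main obstacle.} The delicate point is proving that $T$ is bounded. A given $\Upsilon$-edge $e$ may lie in $\gamma_{x}$ for every $x$ within $\Upsilon$-distance $N$ of an endpoint of $e$, and this multiplicity is uniformly bounded only when the vertex degrees in $\Upsilon$ are. When degrees are bounded by $D$, a Schur-type estimate gives $\|T\|_{p\to p} = O((ND^{N})^{1/p'}N^{1/p})$ and everything goes through. For general finite-cost $\Upsilon$ where $\deg_{\Upsilon}$ is only integrable, the standard workaround---parallel to Gaboriau's handling of $l^{2}$-Betti numbers---is to exhaust by bounded-degree compressions using Proposition~\ref{P:compress} together with one more application of Lemma~\ref{L:finprescontinuity}, or alternatively to replace the deterministic paths $\gamma_{x}$ by averages over suitably chosen random $\Upsilon$-paths so that the expected edge-multiplicity is uniformly bounded. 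The argument for $\underline{\dim}$ is identical since every tool invoked---the continuity lemma, the compression identity, and invariance under bounded equivariant bijections---is available for both versions.
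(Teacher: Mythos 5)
Your overall strategy (reduce to nested graphings, exhaust via Lemma~\ref{L:finprescontinuity}, then compare one step at a time by straightening the new edge along a path in the old graphing) is reasonable and close in spirit to the paper, but the proof as written has a genuine gap, and it is exactly the one you flag yourself: the boundedness of the straightening operator $T$. Your reduction leaves $\Upsilon$ as a general finite-cost graphing, whose vertex degrees are merely integrable, so a single $\Upsilon$-edge can appear in $\gamma_{x}$ for unboundedly many $x$ and $\|T\|_{p\to p}$ need not be finite. Without boundedness of $T$, nothing downstream survives: $T$ need not descend to the quotient by the \emph{closed} span $B_{1}^{(p)}$, the infinite sum $\sum_{x}f(x,\phi(x))(\gamma_{x}-\mathcal{E}_{(x,\phi(x))})$ need not converge in $l^{p}$, and the claimed bijection of quotients evaporates. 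The two proposed repairs are not carried out and are not obviously viable: compressing via Proposition~\ref{P:compress} restricts the base space but does not bound degrees of a finite-cost graphing, and the random-path averaging would itself require a uniform multiplicity estimate that is the whole difficulty.

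The paper's proof is organized precisely to sidestep this. It performs a \emph{double} truncation: $\Phi^{(n)}$ uses only the first $n$ partial morphisms of $\Phi$ (so degrees are bounded by $2n$), and $\Psi^{(n,m)}$ keeps only those $\Psi$-edges whose endpoints are within $\Phi^{(n)}$-distance $m$; connecting paths then have length $\leq m$ in a degree-$\leq 2n$ graph, so the analogue of your $T$ is bounded uniformly in $x$. Moreover the paper never claims an isomorphism of quotients: it only uses monotonicity under equivariant maps with dense image plus subadditivity under exact sequences, paying an error term $c(\Phi^{(n)}\setminus\mathcal{C}(\Psi^{(n,m)}))$ that is driven to zero by a diagonal choice $m=m_{n}$ together with Lemma~\ref{L:finprescontinuity}. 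If you want to salvage your argument, you should likewise truncate $\Upsilon$ to finitely many generators before choosing the paths $\gamma_{x}$ (the continuity lemma does not require the intermediate subgraphs to generate $\R$), and replace the isomorphism claim by the two inequalities obtained from the exact sequences, absorbing the unmatched edges into a cost error.
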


\begin{proof} Let $\Phi=(\phi_{j})_{j=1}^{\infty}.$ Let $\Phi^{(n)}_{x},\Psi^{(nm)}_{x}$ be the subgraphs defined by
\[\mathcal{E}(\Phi^{(n)}_{x})=\{(y,\phi_{j}^{\pm 1}(y)):1\leq j\leq n,y\in \dom(\phi_{j}^{\pm 1}),y\thicksim x\}\]
\[\mathcal{E}(\Psi^{(n,m)}_{x})=\{(y,z)\in \mathcal{E}(\Psi_{x}):d_{\Phi^{(n)}_{x}}(y,z)\leq m\}.\]

	Note that if $\gamma_{yz},\gamma_{y,z}'$ are two paths from $y$ to $z$ in $\Phi^{(n)},$ then their difference is a loop in $\Phi^{(n)}.$ Thus for $(y,z)\in \mathcal{E}(\Psi^{(n,m)}_{x})$ we have a well-defined element $\sigma_{yz}$ of $l^{p}(E(\Phi^{(n)})_{x})/B_{1}^{(p)}(\Phi^{(n)}_{x})$ given as the equivalence class of any path from $y$ to $z$ in $\Phi^{(n)}.$ 
	
	Then  for each $n,m$ we have a well-defined bounded linear map with $T_{x}$ (whose norm is bounded uniformly in $x$)
\[T_{x}\colon l^{p}(E(\Psi^{(n,m)}_{x}))/B_{1}^{(p)}(\Psi^{(n,m)})\to l^{p}(E(\Phi^{(n)}_{x}))/B_{1}^{(p)}(\Phi^{(n)})\ \]
by
\[T_{x}f=\sum_{[y,z]\in E(\Phi^{(n)})}f(y,z)\sigma_{yz},\]
 Let
\[T=\int^{\oplus}_{X}T_{x}\,d\mu(x),\]
then $T$ is an $\R$- equivariant map
\[L^{p}(\Psi^{(n,m)})/B^{(p)}_{1}(\Psi^{(n,m)})\to L^{p}(\Phi^{(n)})/B^{(p)}_{1}(\Phi^{(n)}).\]

	Thus
\begin{align*}
\dim_{\Sigma,l^{q}}(L^{p}(E(\Phi^{(n)}))/B^{(p)}_{1}(\Phi^{(n)}),\R)&\leq \dim_{\Sigma,l^{q}}(\overline{\im T},\R)\\
&+\dim(L^{p}(E(\Phi^{(n)}))/B^{(p)}_{1}(\Phi^{(n)})/\overline{\im T},\R)\\
&\leq \dim_{\Sigma,l^{q}}(L^{p}((E\Psi^{(n,m)}))/B^{(p)}_{1}(\Psi^{(n,m)}),\R)\\
&+\dim_{\Sigma,l^{q}}(L^{p}(E(\Phi^{(n)}))/B^{(p)}_{1}(\Phi^{(n)})/\overline{\im T},\R).
\end{align*}

	Now suppose that $x\thicksim y\thicksim z$ in $X,$ and $y,z$ are in the same connected component in $\Psi^{(n,m)}_{x}.$ Then we can find $x_{1},\cdots,x_{n}$ with $y=x_{1},x_{n}=z$ which are adjacent and 
\[\sum_{i=1}^{n-1}\sigma^{x_{i}x_{i+1}}_{x},\]
is a path from $y$ to $z$ in $B_{x}.$ Further, if $\sigma_{yz}$ is any other such path, then again there difference is a loop, so $\sigma_{yz}$ represents a well-defined element in $\im(T).$ Let
\[Y^{(n)}_{x}=\overline{\Span}^{\|\cdot\|_{p}}\{\sigma_{yz}:y,z\mbox{ are connected in } \Psi^{(n,m)}_{x}\}.\]

	Then
\[\dim_{\Sigma}(L^{p}(E(\Phi^{(n)}))/B^{(p)}_{1}(\Phi^{(n)})/\overline{\im T},\R)\leq \dim_{\Sigma}(L^{p}(E(\Phi^{(n)}))/B^{(p)}_{1}(\Phi^{(n)})/Y_{n},\R).\]

	Now let $V^{(n)}_{x}\subset l^{p}(G^{B}_{x})$ be defined by
\[V^{(n)}_{x}=\overline{\Span}^{\|\cdot\|_{p}}\{\gamma^{yz}:\gamma^{yz} \mbox{ is a path from $y$ to $z$ in $\Phi^{(n)}_{x},$ $y,z$ connected in $\Psi^{(n,m)}_{x}$}\}.\]

	Then we have a surjective equivariant map
\[L^{p}(E(\Phi^{(n)}))/V^{(n)}\to (L^{p}(E(\Phi^{(n)}))/B^{(p)}_{1}(\Phi^{(n)})/Y_{n},\]
so
\[\dim_{\Sigma}(L^{p}(E(\Phi^{(n)}))/B^{(p)}_{1}(\Phi^{(n)})/Y_{n},\R)\leq \dim_{\Sigma}(L^{p}(E(\Phi^{(n)}))/V^{(n)},\R).\]

	Let $(E_{j})_{j=1}^{\infty}$ be disjoint edges generating $L^{p}(E(\Phi^{(n)}))$ such that
\[\sum_{j=1}^{\infty}\mu(\supp(E_{j}))=c(\Phi^{(n)}).\]

	Writing $E^{(j)}_{x}=(f(x),g(x)).$ Then
\begin{align*}
\dim_{\Sigma,l^{q}}(L^{p}(E(\Phi^{(n)}))/V^{(n)},\R)&\leq\sum_{j=1}^{\infty}\mu(\{x\in \supp(E_{j}):(f(x),g(x))\notin \mathcal{C}(\Psi^{(m,n)}_{x})\})\\
&=c(\Phi^{(n)}\setminus \mathcal{C}(\Psi^{(n,m)}))
\end{align*}

where
\[\mathcal{C}(\Psi^{(n,m)})=\{(y,z)\in \R: y\mbox{ is connected to $z$ in } \Phi^{(n)}_{x}\}.\]

	Putting this altogether we have

\begin{align*}
\dim_{\Sigma,l^{q}}(L^{p}(E(\Phi^{(n)}))/B^{(p)}_{1}(\Phi^{(n)}),\R)&\leq\dim_{\Sigma,l^{q}}(L^{p}(\Psi^{(n,m)})/B^{(p)}_{1}(\Psi^{(n,m)}),\R)\\
&+c(\Phi^{(n)}\setminus \mathcal{C}(\Psi^{(n,m)})),
\end{align*}

choose an increasing sequence of integers $m_{n}$ so that 
\[c(\Psi_{x}\cap \mathcal{C}(\Phi^{(n)})) \setminus \Psi^{(n,m_{n})})\to 0\]
\[c([\Phi^{(n)}\setminus \mathcal{C}(\Psi_{x}\cap \mathcal{C}(\Phi^{(n)}_{x}))]\setminus[\Phi^{(n)}\setminus \mathcal{C}(\Psi^{(n,m_{n})})])\to 0.\]

	Then $\Psi^{(n,m_{n})}$ increases to $\Psi,$ and it is easy to see that 
\[c(\Phi^{(n)}\setminus \mathcal{C}(\Psi^{(n,m_{n})}))\to 0.\]

	Thus letting $n\to \infty$ and applying the preceding lemma we find that 
\[\dim_{\Sigma,l^{q}}(L^{p}(E(\Phi))/B^{(p)}_{1}(\Phi),\R)\leq \dim_{\Sigma,l^{q}}(L^{p}(E(\Psi))/B^{(p)}_{1}(\Psi),\R)\]
the proposition now follows by symmetry.

\end{proof}

\begin{definition} \emph{Let $(X,\mu,\R)$ be a discrete measure-preserving equivalence relation, with $\R$ finitely presented and of finite cost, and let $\Sigma$ be a sofic approximation of $\R.$  By the above Theorem, the number $c^{(p)}_{1,\Sigma}(\R)=\dim_{\Sigma,l^{p}}(L^{p}(E(\Phi))/B^{(p)}_{1}(\Phi),\R)$ is independent on the choice of a finite cost graphing $\Phi.$ Similar remarks apply to ~$\underline{c}^{(p)}_{1,\Sigma}(\R)=\underline{\dim}_{\Sigma,l^{p}}(L^{p}(E(\Phi))/B^{(p)}_{1}(\Phi),\R).$~}\end{definition}

	It is easy to see that $c^{(p)}_{1}(\R)\leq c(\R).$ By Theorem $\ref{T:HSlowerbound},$  if $\R$ has infinite orbits we then 
\[\beta^{(2)}_{1}(\R)+1=\underline{c}^{(2)}_{1,\Sigma}(\R)= c^{(2)}_{1,\Sigma}(\R)\leq c(\R),\]
and a well-known conjecture would imply that this inequality is an equality.

\begin{theorem} Let $(X,\mu,\R)$ be a ergodic, finitely presented, discrete, measure-preserving equivalence relation, and let $\Sigma$ be a sofic approximation of $\R.$ Let $A\subseteq \R,$ and define $\sigma_{i,A}\colon L(\R_{A})\to M_{d_{i}}(\CC)$ by $\sigma_{i,A}(x)=\sigma_{i}(\id_{A})\sigma_{i}(x)\sigma_{i}(\id_{A}).$ Then
\[\mu(A)(c^{(p)}_{1,\Sigma_{A}}(\R_{A})-1)\geq c^{(p)}_{1,\Sigma}(\R)-1.\]
\end{theorem}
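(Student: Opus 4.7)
The plan is to adapt Gaboriau's proof of the compression inequality for cost (\cite{Gab2}) to the $l^p$-dimension setting: build a graphing of $\R$ from a given finite cost graphing of $\R_A$, then use subadditivity to isolate the contribution of the compressed part.

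First, fix a finite cost graphing $\Phi_A$ of $\R_A$ (automatically finitely presented, by the preceding lemma on invariance of finite presentability). Since $\R$ is ergodic with $0<\mu(A)<1$, a.e.\ $\R$-orbit meets both $A$ and $X\setminus A$, so one may choose countably many $\psi_k\in [[\R]]$ with $\dom(\psi_k)\subseteq A$ and $\{\ran(\psi_k)\}_{k\geq 1}$ a measurable partition of $X\setminus A$. Then $\Phi:=\Phi_A\cup\{\psi_k\}_{k\geq 1}$ is a finite cost, finitely presented graphing of $\R$ with $\sum_k\mu(\dom\psi_k)=1-\mu(A)$.

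Since $B_1^{(p)}(\Phi_A)\subseteq B_1^{(p)}(\Phi)$, monotonicity applied to the canonical $\R$-equivariant surjection $L^p(E(\Phi))/B_1^{(p)}(\Phi_A)\twoheadrightarrow L^p(E(\Phi))/B_1^{(p)}(\Phi)$ yields
\[c^{(p)}_{1,\Sigma}(\R)\leq \dim_{\Sigma,l^{p}}\!\left(L^p(E(\Phi))/B_1^{(p)}(\Phi_A),\R\right).\]
The disjointness of edge sets gives the $\R$-invariant splitting $L^p(E(\Phi))=L^p(E(\Phi_A))\oplus L^p(E(\{\psi_k\}))$ and hence the $\R$-equivariant short exact sequence
\[0\to L^p(E(\Phi_A))/B_1^{(p)}(\Phi_A)\to L^p(E(\Phi))/B_1^{(p)}(\Phi_A)\to L^p(E(\{\psi_k\}))\to 0,\]
to which Theorem \ref{T:subaddexact} applies. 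For the kernel, I would invoke Proposition \ref{P:compress}: for $x\in A$ the fiber $l^p(E(\Phi_A)_x)/B_1^{(p)}((\Phi_A)_x)$ depends only on the $\R_A$-orbit $\mathcal{O}_x\cap A$, so $\id_A\!\left(L^p(E(\Phi_A))/B_1^{(p)}(\Phi_A)\right)$ agrees as an $\R_A$-representation with the intrinsic $\R_A$-version of $L^p(E(\Phi_A))/B_1^{(p)}(\Phi_A)$ up to a $\mu(A)^{1/p}$ norm rescaling (irrelevant for $\dim$); hence Proposition \ref{P:compress} identifies its $\R$-dimension as $\mu(A)\,c^{(p)}_{1,\Sigma_A}(\R_A)$. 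For the cokernel, define $e_{\psi_k}\in L^p(E(\{\psi_k\}))$ fiberwise by $(e_{\psi_k})_x=\chi_{(x,\psi_k(x))}$ if $x\in\dom\psi_k$ and $0$ otherwise. A direct check shows $(e_{\psi_k})_k$ is dynamically generating ($\R$-translates recover every edge indicator) and $\supp e_{\psi_k}=\dom\psi_k$, so the generating-support bound gives
\[\dim_{\Sigma,l^{p}}(L^p(E(\{\psi_k\})),\R)\leq \sum_k\mu(\dom\psi_k)=1-\mu(A).\]
Chaining these estimates produces $c^{(p)}_{1,\Sigma}(\R)\leq \mu(A)\,c^{(p)}_{1,\Sigma_A}(\R_A)+(1-\mu(A))$, which rearranges to the claim.

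The main obstacle is the careful verification in the compression step that $\id_A\!\left(L^p(E(\Phi_A))/B_1^{(p)}(\Phi_A)\right)$ really coincides, as an $\R_A$-representation, with the intrinsic $\R_A$-version of $L^p(E(\Phi_A))/B_1^{(p)}(\Phi_A)$: one must unwind the direct integral definitions, match fiber spaces, match the $\R_A$-actions, and confirm that the $\mu$ versus $\mu_A=\mu|_A/\mu(A)$ renormalization is precisely absorbed by the $\mu(A)$ factor in Proposition \ref{P:compress}. Secondary checks (routine but necessary) include dynamic generation of $(e_{\psi_k})_k$, the identification $\supp e_{\psi_k}=\dom\psi_k$ at the level of $\R$-representation support, the $\R$-equivariance of the surjection and short exact sequence above, and the fact that the extension $\Phi_A\cup\{\psi_k\}$ yields a finitely presented graphing of $\R$ so that $c^{(p)}_{1,\Sigma}(\R)$ may indeed be computed from it.
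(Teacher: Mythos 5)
Your argument is correct and follows essentially the same route as the paper: extend a graphing of $\R_{A}$ to a graphing of $\R$ by adjoining partial morphisms of total cost $1-\mu(A)$ carrying subsets of $A$ onto a partition of $X\setminus A$, split off the new edges via Theorem \ref{T:subaddexact}, bound their contribution by $1-\mu(A)$ via the generating-support estimate, and apply Proposition \ref{P:compress} to the remaining piece. The only cosmetic differences are that the paper uses finitely many connecting maps and the exact equality $B_{1}^{(p)}(\Phi)=B_{1}^{(p)}(\Psi)$ (the new edges are pendant) where you use countably many maps and monotonicity; note, however, that finite presentability of $\R_{A}$ itself (needed for $c^{(p)}_{1,\Sigma_{A}}(\R_{A})$ to be well defined) does not follow from the lemma you cite, which concerns different graphings of the same relation $\R$ --- a point the paper elides as well.
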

\begin{proof} Let $\Psi$ be a graphing of $\R_{A}.$ Let $n\in \NN\cup\{0\}$ be such that $n\mu(A)\leq 1<(n+1)\mu(A).$ Let $A=A_{1},A_{2},\cdots,A_{n}$ be essentially disjoint measurable sets such that there exists $\phi_{i}\in[[\R]],$ with $\dom(\phi_{i})=A,\ran(\phi_{i})=A_{i},$  and let $A'\subseteq A$ be such that there is $\phi_{n+1}\in [[\R]]$ with $\dom(\phi_{n+1})=A',$ and 
\[\ran(\phi_{n+1})=X\setminus \bigcup_{j=1}^{n}A_{j}.\]

	Let $\Phi=\Psi\cup\{\phi_{j}\}_{j=1}^{n+1}.$ We use $L^{p}(E(\Psi\big|_{A})),B_{1}^{(p)}(\Psi\big|_{A}),$ for 
\[\int_{A}^{\oplus_{p}}l^{p}(E(\Psi)_{x})\,d\mu(x),\]
\[\int_{A}^{\oplus_{p}}B_{1}^{(p)}(\Psi_{x})\,d\mu(x),\]
and $L^{p}(E(\Psi)),B_{1}^{(p)}(\Psi),$ for
\[\int_{X}^{\oplus_{p}}l^{p}(E(\Phi)_{x})\,d\mu(x),\]
\[\int_{X}^{\oplus_{p}}B_{1}^{(p)}(\Phi_{x})\,d\mu(x).\]

	Then
\[\chi_{A}L^{p}(E(\Psi))=L^{p}(E(\Psi\big|_{A})),\]
\[\chi_{A}B_{1}^{(p)}(\Psi)=B^{(p)}_{1}(\Psi),\]
also it is easy to see that
\[B_{1}^{(p)}(\Phi)=B_{1}^{(p)}(\Psi).\]

	Considering the exact sequence
\[\begin{CD} 0@>>> L^{p}(E(\Phi\setminus \Psi) @>>> \frac{L^{p}(E(\Phi))}{B^{(p)}_{1}(\Psi)} @>>> \frac{L^{p}(E(\Psi))}{B^{(p)}_{1}(\Psi)} @>>> 0,\end{CD}\]
we have
\begin{align*}
c^{(p)}_{1,\Sigma}(\R)&\leq c(\Phi\setminus \Psi)+\dim_{\Sigma,l^{p}}(L^{p}(E(\Phi))/B^{(p)}_{1}(\Psi),\R)\\
&=1-\mu(A)+\dim_{\Sigma,l^{p}}(L^{p}(E(\Phi))/B^{(p)}_{1}(\Psi),\R).
\end{align*}

	By Proposition \ref{P:compress}, we thus have
\[c^{(p)}_{1,\Sigma}(\R)\leq 1-\mu(A)+\mu(A)c^{(p)}_{1,\Sigma}(\R_{A}).\]

	Rearraging proves the inequality.

\end{proof}

\begin{cor} Let $(X,\mu,\R)$ be a sofic, ergodic, finitely presented, discrete, measure-preserving equivalence relation. If for some $p$ we have 
\[\inf_{\Sigma}c^{(p)}_{1,\Sigma}(\R)>1,\]
where the infimum is over all sofic approximations, then the fundamental group of $\R$ is trivial.

\end{cor}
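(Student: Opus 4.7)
The plan is to argue by contradiction, iterating the inequality from the previous theorem. Suppose the fundamental group $\mathcal{F}(\R)$ is nontrivial; since it is a subgroup of $(\RR_{>0},\cdot)$, I may choose $t \in \mathcal{F}(\R)$ with $0 < t < 1$ (replacing $t$ by $t^{-1}$ if necessary). Ergodicity then gives a measurable $A \subseteq X$ with $\mu(A)=t$ and an isomorphism of equivalence relations $\theta\colon \R \to \R_A$. Setting $A_1 = A$ and $A_n = \theta(A_{n-1}) \subseteq A_{n-1}$, a straightforward induction using the identity $(\R_A)_{\theta(B)} = \R_{\theta(B)}$ (with the properly normalized measures) yields $\mu(A_n) = t^n$ and $\R \cong \R_{A_n}$ for every $n \in \NN$.

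The key step is to apply the previous theorem with $A = A_n$: for any fixed sofic approximation $\Sigma$ of $\R$,
\[c^{(p)}_{1,\Sigma_{A_n}}(\R_{A_n}) - 1 \;\geq\; \frac{c^{(p)}_{1,\Sigma}(\R) - 1}{t^n}.\]
Transporting $\Sigma_{A_n}$ back along the iterated isomorphism $\R \cong \R_{A_n}$ produces a sofic approximation $\Sigma^{(n)}$ of $\R$, and since $c^{(p)}_{1,\cdot}(\cdot)$ depends only on the (equivalence relation, sofic approximation) pair up to isomorphism, $c^{(p)}_{1,\Sigma^{(n)}}(\R) = c^{(p)}_{1,\Sigma_{A_n}}(\R_{A_n})$.

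Now the hypothesis $c := \inf_\Sigma c^{(p)}_{1,\Sigma}(\R) > 1$ forces the numerator above to be at least $c-1 > 0$, so $c^{(p)}_{1,\Sigma^{(n)}}(\R) \geq 1 + (c-1)/t^n \to \infty$ as $n \to \infty$. On the other hand, every sofic approximation $\Sigma'$ of $\R$ satisfies $c^{(p)}_{1,\Sigma'}(\R) \leq c(\R) < \infty$, as noted in the paper immediately after the definition of $c^{(p)}_{1,\Sigma}(\R)$, using that $\R$ is finitely presented and hence of finite cost. This contradicts the unbounded growth of $c^{(p)}_{1,\Sigma^{(n)}}(\R)$ and establishes $\mathcal{F}(\R) = \{1\}$.

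The main obstacle I anticipate is verifying that $c^{(p)}_{1,\cdot}(\cdot)$ is genuinely invariant under isomorphism of (equivalence relation, sofic approximation) pairs, so that the identification $c^{(p)}_{1,\Sigma^{(n)}}(\R) = c^{(p)}_{1,\Sigma_{A_n}}(\R_{A_n})$ is legitimate. This should reduce to observing that the graphings, $q$-dynamical filtrations, spaces $\Hom_{\R,l^p}(\dots)$, and the normalized quantities $\frac{1}{d_i}d_\varepsilon(\cdot)$ all transform functorially under an equivalence-relation isomorphism, but some bookkeeping is needed because the compression $\Sigma_{A_n}$ effectively rescales the matrix sizes $d_i$ by $\tr(\sigma_i(\id_{A_n})) \to t^n$, and one must check that this rescaling is absorbed correctly when identifying the compressed image with a full sofic approximation of $\R_{A_n}$.
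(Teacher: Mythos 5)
Your argument is correct and is exactly the intended derivation: the paper states this corollary without proof immediately after the compression inequality $\mu(A)(c^{(p)}_{1,\Sigma_{A}}(\R_{A})-1)\geq c^{(p)}_{1,\Sigma}(\R)-1$ and the remark $c^{(p)}_{1,\Sigma}(\R)\leq c(\R)<\infty$, and iterating the former against the latter (using isomorphism-invariance of $c^{(p)}_{1,\cdot}$ to transport $\Sigma_{A_n}$ back to a sofic approximation of $\R$) is precisely the argument being invoked. The transport-of-structure point you flag is the only bookkeeping needed, and your description of how to handle it is accurate.
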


	We will deduce more about $c^{(p)}_{1,\Sigma}(\R)$ in the non-hyperfinite case, but we will first need to discuss the discrete Hodge decomposition for amenable graphs.

 Let $G$ be a countably infinite graph of unifomrly bounded degree. Since $G$ is infinite, $\delta$ is always injective, we say that $G$ is \emph{amenable} if for some $1\leq p<\infty,$ we have $\delta(l^{p}(V(G))$ is a closed subspace of $l^{p}(E(G)).$ Equivalently, there is some $C>0$ so that 
\[\|\delta f\|_{p}\geq C\|f\|_{p}.\]

	Note that if $p$ is as above, then for all $1<q<\infty,$ we have $\delta(l^{q}(G))$ is closed in $l^{q}(E(G)).$ For if $\delta(l^{q}(G))$ were not closed, then we could find $f_{n}\in l^{q}(G)$ of norm one so that $\|\delta f_{n}\|_{q}\to 0,$ by the triangle inequality we have $\|\delta |f_{n}|\|_{q}\to 0,$ and then we find that  $\|\delta |f_{n}|^{q/p}\|_{p}\to 0,$ which contradicts the fact that $\|\delta f\|_{p}\geq C\|f\|_{p}.$ 

	By duality $G$ is amenable if and only if $\partial$ is surjective as an operator from $l^{p}(E(G))\to l^{p}(V(G))$ for some $1<p<\infty,$ and this is also equivalent to saying that $\partial$ is surjective as an operator from $l^{p}(E(G))\to l^{p}(V(G))$ for all $1<p<\infty.$

For notation we let $\Delta=\partial \circ \delta.$

\begin{proposition} Let $G$ be an infinite amenable graph of uniformly bounded degree, then $\Delta$ is invertible as an operator from $l^{p}(V(G))\to l^{p}(V(G))$ for all $1<p<\infty.$ \end{proposition}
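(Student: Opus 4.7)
My plan is to construct $\Delta^{-1}$ directly as an integrated heat semigroup, $\Delta^{-1}=\int_0^\infty e^{-t\Delta}\,dt$, and to control its $l^p$ operator norm via Riesz–Thorin interpolation from $l^1$, $l^2$, and $l^\infty$ bounds. Since $\Delta$ is bounded on every $l^p(V(G))$ (the graph has uniformly bounded degree), the power series $e^{-t\Delta}=\sum_{n\ge 0}(-t\Delta)^n/n!$ converges in operator norm on each $l^p$, so I only need to verify that the above integral converges in operator norm and acts as a two-sided inverse of $\Delta$.

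First I would establish a spectral gap on $l^2$. The identity $\langle\Delta f,f\rangle_{l^2}=\|\delta f\|_2^2$, obtained either by pairing and using $\langle\partial h,g\rangle=-\langle h,\delta g\rangle$ or verified directly from $\Delta=D-A$, combined with the amenability hypothesis $\|\delta f\|_2\ge C\|f\|_2$, gives $\Delta\ge C^2 I$ as self-adjoint operators on $l^2$. Spectral calculus then yields $\|e^{-t\Delta}\|_{l^2\to l^2}\le e^{-C^2 t}$.

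Second I would show that $e^{-t\Delta}$ is a symmetric stochastic kernel, so that $\|e^{-t\Delta}\|_{l^1\to l^1},\|e^{-t\Delta}\|_{l^\infty\to l^\infty}\le 1$. For non-negativity of entries, write $\Delta=MI-B$ with $M=\sup_v\deg(v)$ and $B=MI-D+A$; the diagonal of $B$ is $M-\deg(v)\ge 0$ and off-diagonal entries come from $A$, so $B$ has non-negative entries and therefore $e^{-t\Delta}=e^{-tM}e^{tB}$ does as well. For row sums, $\Delta\cdot\mathbf{1}=(D-A)\mathbf{1}=0$, so the power series gives $e^{-t\Delta}\mathbf{1}=\mathbf{1}$ in $l^\infty$; by symmetry of $\Delta$ the column sums are also $1$.

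Third, Riesz–Thorin interpolation between the $l^2$ estimate and the $l^1$ (respectively $l^\infty$) estimate gives $\|e^{-t\Delta}\|_{l^p\to l^p}\le e^{-c_p t}$ for every $1<p<\infty$, with $c_p=2C^2\min(1/p,1-1/p)>0$. Hence $\int_0^\infty e^{-t\Delta}\,dt$ converges absolutely in operator norm on $l^p$ with bound $1/c_p$, and the identity $\Delta\int_0^\infty e^{-t\Delta}\,dt=-\int_0^\infty \tfrac{d}{dt}e^{-t\Delta}\,dt=I$ (which I would justify using boundedness of $\Delta$ and absolute convergence) exhibits it as a two-sided inverse; by open mapping the inverse is bounded, so $\Delta$ is invertible on $l^p$. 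The main technical point requiring care is that $e^{-t\Delta}$ is given by a single $V(G)\times V(G)$ matrix that agrees on $c_c(V(G))$ across all $l^p$ — so Riesz–Thorin is actually applicable to one operator — but this is immediate from the power-series definition once one observes $\Delta$ acts by the same local formula on every $l^p$.
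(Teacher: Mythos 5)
Your argument is correct, and its skeleton is the same as the paper's: both proofs rest on (i) the $l^{2}$ spectral gap coming from the closed-range hypothesis (which, as the paper notes just before the proposition, upgrades from ``some $p$'' to all $1<q<\infty$, in particular $q=2$), (ii) trivial $l^{1}$ and $l^{\infty}$ contraction bounds for a Markov-type operator, and (iii) Riesz--Thorin interpolation. The implementations differ, though. The paper renormalizes by the degree measure $d$, so that the averaging operator $A$ is self-adjoint on $l^{2}(V(G),d)$ and a contraction on $l^{1}(V(G),d)$ and $l^{\infty}(V(G),d)$; the spectral gap gives $\|A\|_{l^{2}(d)\to l^{2}(d)}<1$, interpolation gives $\|A\|_{l^{p}(d)\to l^{p}(d)}<1$, and $\Delta_{d}=A-\id$ is inverted by a Neumann series. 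You instead keep the unnormalized Laplacian and invert it by integrating the heat semigroup $\int_{0}^{\infty}e^{-t\Delta}\,dt$, interpolating the semigroup's decay rather than the one-step operator's norm. What your route buys is that no change of measure is needed: $e^{-t\Delta}$ is automatically symmetric and stochastic (via the $\Delta=MI-B$ splitting and $\Delta\mathbf{1}=0$), whereas the unnormalized one-step operator is not a contraction on $l^{1}$ with counting measure when the degree is non-constant. What it costs is the extra bookkeeping you flag yourself --- positivity of the kernel, the Fubini-type interchange needed to identify row/column sums with $e^{-t\Delta}\mathbf{1}$, and the convergence of the operator-valued integral --- all of which the paper gets for free from the explicit form of $A$. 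One caveat on signs: with the paper's conventions ($\ip{\partial f,g}=-\ip{f,\delta g}$, $\Delta=M_{d}(A-\id)$) one has $\ip{\Delta f,f}=-\|\delta f\|_{2}^{2}$, so the paper's $\Delta$ is negative semidefinite and your identity $\ip{\Delta f,f}=\|\delta f\|_{2}^{2}$ holds for $-\Delta$; this is immaterial (replace $e^{-t\Delta}$ by $e^{t\Delta}$ throughout), but you should fix one convention and stick to it.
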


\begin{proof} Let $d(x)$ be the degree of $x,$ and let $M_{d}$ be the operator on $l^{p}(V(G))$ given by multiplication by $d.$ Define
\[Af(x)=\frac{1}{d(x)}\sum_{y:[x,y]\in E(G)}f(y),\]
and note that $\Delta=M_{d}(A-\id).$ 

	Regard $d$ as a measure on $V(G),$ then since $G$ has uniformly bounded degree we know that 
\[l^{p}(V(G))=l^{p}(V(G),d)\]
with equivalent norms. Regard $\delta$ as an operator from $l^{p}(V(G),d)\to l^{p}(E(G))$ and let $-\partial_{d}$ be its adjoint, also let $\Delta_{d}=\partial_{d}\circ \delta.$ Since
\[\ip{\delta f,g}_{l^{p}(E(G))}=-\ip{f,\partial g}_{l^{p}(V(G))}=-\ip{f,M_{d}M_{d^{-1}}\partial g}_{l^{p}(V(G))}=\]
\[-\ip{f,M_{d^{-1}}\partial g}_{l^{p}(V(G),d)},\]
we find that $\partial_{d}=M_{d^{-1}}\partial,$ so
\[\Delta_{d}=M_{d^{-1}}\Delta=A-\id,\]
hence it suffices to show that $\Delta_{d}$ is invertible for all $1<p<\infty$ as an operator from $l^{p}(V(G),d)\to l^{p}(V(G),d).$ 

	Let $\varepsilon>0$ be such that $\|\delta f\|_{l^{2}(E(G))}\geq \varepsilon\|f\|_{l^{2}(V(G),d)},$ then
\[\varepsilon^{2}\leq -\Delta_{d},\]
as an operator on $l^{2}(V(G),d).$ Since $-\Delta_{d}=1-A,$ this implies that $ A\leq 1-\varepsilon^{2}$ as an operator on $l^{2}(V(G),d).$ 

	Thus
\[|\ip{Af,f}_{l^{2}(V(G),d)}|\leq \ip{A|f|,|f|}_{l^{2}(V(G),d)}\leq (1-\varepsilon^{2})\|f\|_{2}^{2}.\]

	Since $A$ is a a self-adjoint operator, this implies that $\|A\|_{l^{2}(V(G),d)\to l^{2}(V(G),d)}<1.$ Since $\|A\|_{l^{1}(V(G),d)\to l^{1}(V(G),d)}\leq 1,\|A\|_{l^{\infty}(V(G),d)\to l^{\infty}(V(G),d)}\leq 1,$ by interpolation we find that there is a $C_{p}<1$ so that 
\[\|A\|_{l^{p}(V(G),d)\to l^{p}(V(G),d)}\leq C_{p}.\]

	Thus $\Delta_{d}$ is invertible, as desired.

\end{proof}

\begin{cor}[Discrete Hodge Decomposition] Let $G$ be an infinite non-amenable graph of uniformly bounded degree, then for every $1<p<\infty$ we have the direct sum decomposition
\[l^{p}(E(G))=Z_{1}^{(p)}(G)\oplus B^{1}_{(p)}(G),\]
further a projection onto $B^{1}_{(p)}(G)$ relative to this decomposition may be given by $\delta \circ \Delta^{-1}\circ \partial.$ 
\end{cor}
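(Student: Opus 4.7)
The plan is to verify that the operator $P := \delta \circ \Delta^{-1} \circ \partial$ is a bounded idempotent on $l^{p}(E(G))$ with $\Ima(P) = B^{1}_{(p)}(G)$ and $\ker(P) = Z_{1}^{(p)}(G)$; the decomposition then follows formally. (I read the hypothesis ``non-amenable'' as the same hypothesis under which the preceding proposition gives $\Delta$ invertible, i.e.\ the bound $\|\delta f\|_{p} \geq C\|f\|_{p}$ holds.) Boundedness of $P$ is immediate: $\delta$ and $\partial$ are bounded on $l^{p}$ because $G$ has uniformly bounded degree, and $\Delta^{-1}$ is bounded by the preceding proposition. Using the identity $\partial \circ \delta = \Delta$ one computes
\[P^{2} = \delta \circ \Delta^{-1} \circ (\partial \circ \delta) \circ \Delta^{-1} \circ \partial = \delta \circ \Delta^{-1} \circ \Delta \circ \Delta^{-1} \circ \partial = P,\]
so $P$ is a continuous projection.

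Next I would identify the image. Clearly $\Ima(P) \subseteq \delta(l^{p}(V(G)))$. Conversely, for any $g \in l^{p}(V(G))$ we have $P(\delta g) = \delta \Delta^{-1} \partial \delta g = \delta \Delta^{-1} \Delta g = \delta g$, so $\Ima(P) = \delta(l^{p}(V(G)))$. The bound $\|\delta f\|_{p} \geq C\|f\|_{p}$ (which by the remark just before the preceding proposition is valid for every $1<p<\infty$ once it holds for some $p$) shows that $\delta$ has closed range, so $\Ima(P)$ is exactly the closed subspace $B^{1}_{(p)}(G)$ of $l^{p}$-coboundaries.

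For the kernel, the same bound forces $\delta$ to be injective on $l^{p}(V(G))$. Hence $P(f)=0$ iff $\delta(\Delta^{-1}\partial f)=0$ iff $\Delta^{-1}\partial f = 0$ iff $\partial f = 0$, i.e.\ $f\in Z_{1}^{(p)}(G)$. Since $P$ is a bounded projection with the indicated image and kernel, every $f \in l^{p}(E(G))$ has the unique decomposition $f = (f-Pf) + Pf$ with $f-Pf \in Z_{1}^{(p)}(G)$ and $Pf \in B^{1}_{(p)}(G)$, yielding
\[l^{p}(E(G)) = Z_{1}^{(p)}(G) \oplus B^{1}_{(p)}(G)\]
and identifying $\delta\circ\Delta^{-1}\circ\partial$ as the associated projection onto $B^{1}_{(p)}(G)$.

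There is no substantive obstacle: once the preceding proposition supplies the bounded inverse $\Delta^{-1}$ on $l^{p}(V(G))$, the argument is pure bookkeeping via the algebraic identity $\partial\delta=\Delta$. The only minor point to be careful about is the injectivity of $\delta$ on $l^{p}(V(G))$, but this is built into the hypothesis and does not require a separate connected-component analysis.
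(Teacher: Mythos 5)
Your proof is correct; the paper states this corollary with no proof at all, and your argument --- checking that $P=\delta\circ\Delta^{-1}\circ\partial$ is a bounded idempotent with image $\delta(l^{p}(V(G)))$ and kernel $\ker\partial$ via the identity $\partial\circ\delta=\Delta$ and the invertibility of $\Delta$ from the preceding proposition --- is exactly the intended derivation. Your reading of the hypothesis (and of the swapped super/subscripts in $Z_{1}^{(p)}$ and $B^{1}_{(p)}$) is the one that makes the statement consistent with the preceding proposition and with the paper's nonstandard use of the word ``amenable.''
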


	To apply this to the case of equivalence relations, we prove the following Lemma.

\begin{lemma} Let $(X,\mu,\R)$ be a finite cost discrete measure-preserving equivalence relation with $\mathcal{O}_{x}$ infinite for almost every $x$.  The following are equivalent

(i) There is a finite subset $\Phi\subseteq [[\R]],$  such that for almost every $x,$ the graph $\Phi_{x}$ is not amenable,

(ii) for every $\R$-invariant measurable $A\subseteq X$ with $\mu(A)>0$ we have $\R_{A}$ is not hyperfinite,

(iii) for every $A\subseteq X$ with $\mu(A)>0$ we have that $\R_{A}$ is not hyperfinite. 
\end{lemma}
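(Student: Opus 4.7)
The plan is to establish the cycle $(\textup{iii}) \Rightarrow (\textup{ii}) \Rightarrow (\textup{i}) \Rightarrow (\textup{iii})$, with one key auxiliary fact used in two places: if $\R_A$ is hyperfinite for some $A$ with $\mu(A)>0$, then $\R_{\widetilde A}$ is hyperfinite, where $\widetilde A$ denotes the $\R$-saturation of $A$. This auxiliary fact follows because countability of classes lets us write $\widetilde A$, up to a null set, as a disjoint union $\bigsqcup_n \phi_n(A_n)$ with $\phi_n\in[[\R]]$ and $A_n\subseteq A$; transporting increasing finite sub-relations exhausting $\R_A$ along the $\phi_n$, and adding in the finitely many identifications coming from $\{\phi_1,\ldots,\phi_N\}$ at stage $N$, produces an increasing sequence of finite sub-relations exhausting $\R_{\widetilde A}$ (the standard Connes--Feldman--Weiss-style amalgamation). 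Granted this, $(\textup{iii}) \Rightarrow (\textup{ii})$ is tautological, and $(\textup{ii}) \Rightarrow (\textup{iii})$ is the contrapositive of the auxiliary fact applied to $\widetilde A$ (which is invariant of positive measure).

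For $(\textup{i}) \Rightarrow (\textup{iii})$: let $\Phi$ be as in $(\textup{i})$ and let $A$ have positive measure. For almost every $x\in \widetilde A$, the fiber $\Phi_x$ has vertex set $\mathcal{O}_x\subseteq \widetilde A$ and is non-amenable by hypothesis. Since the existence of a finite sub-graphing with almost surely non-amenable fibers obstructs the existence of an $\R$-invariant mean (equivalently, Følner fibers) on the equivalence relation, $\R_{\widetilde A}$ is non-amenable; by Connes--Feldman--Weiss it is not hyperfinite. Applying the auxiliary fact in its contrapositive form forces $\R_A$ to be non-hyperfinite as well.

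The principal content and main obstacle is $(\textup{ii}) \Rightarrow (\textup{i})$. I would argue by contradiction: suppose that for every finite $\Phi\subseteq[[\R]]$ the set of $x$ with $\Phi_x$ amenable has positive measure. Fix a countable family $(\psi_k)\subseteq[[\R]]$ generating $\R$ as an equivalence relation, and for $k\in\NN$ set $A_k:=\{x:(\{\psi_1,\ldots,\psi_k\})_x \text{ is amenable}\}$. These sets need not be invariant nor nested in a useful way, so the heart of the argument is to extract, by a maximal/exhaustion argument over invariant sets supporting Følner sequences for every finite sub-graphing simultaneously, a single $\R$-invariant $A_\infty$ of positive measure on which every finite subgraphing of $\R_{A_\infty}$ has amenable fibers. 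On such an $A_\infty$, Connes--Feldman--Weiss gives that $\R_{A_\infty}$ is amenable, hence hyperfinite, contradicting $(\textup{ii})$.

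The hard step is precisely this simultaneous-Følner extraction: passing from ``amenable fibers for each finite graphing separately'' to ``a positive measure invariant set on which every finite sub-graphing is fiberwise Følner'' is the equivalence-relation analogue of Kaimanovich's characterization of amenability by fiberwise Følner conditions, and requires a measurable-selection argument to glue Følner sets coherently across enlarging generating sets. Once this extraction is accomplished, the remaining implications are straightforward applications of Connes--Feldman--Weiss and the auxiliary saturation fact above.
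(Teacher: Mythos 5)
Your treatment of (iii)$\Rightarrow$(ii) and of (i)$\Rightarrow$(iii) is essentially the paper's: the paper also works on the $\R$-saturation $B$ of $A$, transports Reiter ($l^{1}$-almost-invariant) fields $\xi^{(n)}_{x}$ from $A$ up to $B$ via a family $\{\phi_{j}\}$ with disjoint ranges, and plays the resulting vanishing of $\int_{X}\|\delta_{\Phi_{x}}\zeta^{(n)}_{x}\|_{1}\,d\mu$ against a uniform lower isoperimetric bound $C>0$ (obtained by shrinking $A$ so that the a.e.\ positive, orbit-constant quantity $C_{x}$ is bounded below on $B$). Your version factors the transport step through the standard fact that hyperfiniteness passes from a complete section to its saturation; that is fine, but note that the claim you merely assert --- that a finite subgraphing with a.e.\ non-amenable fibers obstructs amenability --- is exactly the quantitative computation above, and it genuinely uses the uniform constant $C$ on a positive-measure invariant piece together with dominated convergence (finite cost of the finite subgraphing).

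The genuine gap is in (ii)$\Rightarrow$(i). The paper does not prove this implication at all: it cites Lemma 9.5 of Kechris--Miller. Your proposed substitute --- extract an invariant $A_{\infty}$ of positive measure on which every finite subgraphing has amenable fibers, then invoke Connes--Feldman--Weiss to conclude $\R_{A_{\infty}}$ is amenable --- fails at its final step even if the extraction could be carried out. Fiberwise amenability of a graphing (even of every graphing) does not imply amenability of the equivalence relation; this is precisely the content of Kaimanovich's counterexamples, and the correct form of his criterion requires a \emph{measurable, uniformly controlled} F\o lner system along the fibers, not merely that each fiber graph be amenable. So the ``simultaneous-F\o lner extraction'' you flag as the hard step is not just unexecuted: the target you are extracting toward is too weak to feed into Connes--Feldman--Weiss. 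To close this direction you either need the actual argument of Kechris--Miller Lemma 9.5 (which proceeds differently) or a genuinely uniform F\o lner selection, and your sketch supplies neither.
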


\begin{proof} It is clear that (iii) implies (ii).

	The fact that (ii) implies (i) is the content of Lemma 9.5 in \cite{KM}.

	Suppose (iii) fails and (i) holds. Let $A$ with $\mu(A)>0$ be such that $\R_{A}$ is hyperfinite, let $B$ be the $\R$-saturation of $A.$  Since $(i)$ holds, we know that 
\[C_{x}=\inf_{\substack{f\in l^{1}(\mathcal{O}_{x}),\\ \|f\|_{1}=1}}\|\delta_{\Phi_{x}}f\|_{1}>0,\]
for almost every $x\in X$ and is constant of equivalence classes. Thus replacing $A$ with a subset, we may assume that there is  a $C>0$ so that 
\[\|\delta_{\Phi_{x}}f\|_{1}\geq C\|f\|_{1},\]
for all $x\in B.$

	Since $\R_{A}$ is hyperfinite, we may find measurable fields of vectors $\xi^{(n)}_{x}\in l^{1}(\mathcal{O}_{x}\cap A)$ so that $\|\xi^{(n)}_{x}\|_{1}=1,$ and $\|\xi^{(n)}_{x}-\xi^{(n)}_{y}\|_{1}\to 0$ for $(x,y)\in \R_{A}.$ Let $\{\phi_{j}\}_{j\in J}\subseteq [[\R]]$ with $J$ countable be such that $\{\ran(\phi_{j})\}_{j\in J}$ is a disjoint family, $\dom(\phi_{j})\subseteq A,$ and 
\[B=\bigcup_{j\in J}\ran(\phi_{j}).\]

	Define $\lambda^{(n)}_{x}\in \Meas(l^{1}(\mathcal{O}_{x})$ for $x\in X$ by $\lambda^{(n)}_{x}=\xi^{(n)}_{\phi_{j}^{-1}(x)}$ if $x\in B$ and $j$ is such that $x\in \ran(\phi_{j}),$ and $\lambda^{(n)}_{x}=0$ for $x\notin B.$ 

	Define $\zeta^{(n)}\in \Meas(l^{1}(\mathcal{O}_{x}),$ by $\zeta^{(n)}_{x}(y)=\lambda^{(n)}_{y}(x).$ Then
\[\int_{X}\|\delta_{\Phi_{x}}\zeta^{(n)}_{x}\|_{1}\,d\mu(x)\leq \int_{B}\sum_{\phi\in \Phi}\|\lambda^{(n)}_{y}-\lambda^{(n)}_{\phi(y)}\|_{1}\chi_{\dom(\phi)}(y),d\mu(y),\]
and since $\Phi$ has finite cost, this goes to zero by the Dominated Convergence Theorem. But on the other hand,
\[\int_{X}\|\delta_{\Phi_{x}}\zeta^{(n)}_{x}\|_{1}\,d\mu(x)\geq C\int_{X}\|\zeta^{(n)}_{x}\|_{1}\,d\mu(x)=C\int_{\R}|\lambda^{(n)}_{x}(y)|\,d\overline{\mu}(x,y)=C\mu(B),\]
which is a contradiction.

\end{proof}

If $\Phi$ is a graphing of $\R,$ we may define the $l^{p}$-cohomology space of $\R$ as the direct integral of $Z_{1}^{(p)}(\Phi_{x})/B_{1}^{(p)}(\Phi_{x})$ and we denote it by $H_{1}^{(p)}(\Phi).$  We set $\beta^{(p)}_{1,\Sigma}(\phi)=\dim_{\Sigma,l^{p}}(H_{1}^{(p)}(\Phi),\R),$ $\underline{\beta}^{(p)}_{1,\Sigma}(\phi)=\underline{\dim}_{\Sigma,l^{p}}(H_{1}^{(p)}(\Phi),\R).$

\begin{cor} Let $(X,\mu,\R)$ be a discrete, sofic,  measure-preserving equivalence relation such that $\R_{A}$ is not hyperfinite for any $A\subseteq X$ with $\mu(A)>0.$  Suppose $\R$ has finite cost and is finitely presented, and fix a sofic approximation $\Sigma$ of $\R.$ Then for any graphing $\Phi$ of $\R,$ we have
\[c^{(p)}_{1,\Sigma}(\R)\leq \beta^{(p)}_{1,\Sigma}(\Phi)+1,\]
\[\underline{c}^{(p)}_{1,\Sigma}(\R)\leq \underline{\beta}^{(p)}_{1,\Sigma}(\Phi)+1,\]

\end{cor}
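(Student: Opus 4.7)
The plan is to exploit the short exact sequence of $\R$-representations
\[
0 \to Z_{1}^{(p)}(\Phi)/B_{1}^{(p)}(\Phi) \to L^{p}(E(\Phi))/B_{1}^{(p)}(\Phi) \to L^{p}(E(\Phi))/Z_{1}^{(p)}(\Phi) \to 0,
\]
which makes sense because every loop is a cycle. Applying Theorem \ref{T:subaddexact}, in both the symmetric and the asymmetric form $\underline{\dim}(V)\leq\underline{\dim}(W)+\dim(V/W)$, yields
\[
c^{(p)}_{1,\Sigma}(\R) \leq \beta^{(p)}_{1,\Sigma}(\Phi) + \dim_{\Sigma,l^{p}}\!\bigl(L^{p}(E(\Phi))/Z_{1}^{(p)}(\Phi),\R\bigr)
\]
and the analogous inequality for $\underline{c}^{(p)}_{1,\Sigma}$ in terms of $\underline{\beta}^{(p)}_{1,\Sigma}(\Phi)$. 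The entire proof reduces to bounding the coboundary quotient by $1$.

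To establish that bound I would use the non-hyperfiniteness hypothesis to ensure $\Phi_{x}$ is non-amenable almost surely. The preceding lemma (together with the classical Connes--Feldman--Weiss / Adams result identifying amenability of an equivalence relation with a.e.\ amenability of the fibers of any of its graphings) yields this. The discrete Hodge decomposition then provides
\[
l^{p}(E(\Phi_{x})) = Z_{1}^{(p)}(\Phi_{x}) \oplus \overline{\delta_{\Phi_{x}}(l^{p}(V(\Phi_{x})))}
\]
almost everywhere.

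Consequently the composition $L^{p}(V(\Phi)) \xrightarrow{\delta} L^{p}(E(\Phi)) \xrightarrow{\pi} L^{p}(E(\Phi))/Z_{1}^{(p)}(\Phi)$ is a bounded $\R$-equivariant map with dense image. Property (1) of extended dimension then gives
\[
\dim_{\Sigma,l^{p}}\!\bigl(L^{p}(E(\Phi))/Z_{1}^{(p)}(\Phi),\R\bigr) \leq \dim_{\Sigma,l^{p}}\!\bigl(L^{p}(V(\Phi)),\R\bigr),
\]
and the identification $L^{p}(V(\Phi)) \cong L^{p}(\R,\overline{\mu})$ via $(f_{x})_{x} \mapsto ((x,y)\mapsto f_{x}(y))$ combined with Theorem \ref{T:LpDim} shows the right-hand side equals $1$ in the regime $1\leq p\leq 2$.

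The main technical obstacle is that the Hodge decomposition in the paper is established under a uniformly bounded degree hypothesis, whereas an arbitrary finite-cost graphing has only almost-surely finite degree. I would handle this by exhausting $\Phi$ by bounded-degree subgraphings $\Phi^{(n)}$, each chosen to contain a fixed non-amenable finite $\Phi_{0}\subseteq[[\R]]$ provided by the preceding lemma (so that each $\Phi^{(n)}_{x}$ is itself non-amenable a.s.), applying Hodge to each $\Phi^{(n)}$, and passing to the limit via the continuity Lemma \ref{L:finprescontinuity} as $c(\Phi\setminus\Phi^{(n)})\to 0$.
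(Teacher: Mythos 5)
Your argument is essentially the paper's: both rest on fiberwise non-amenability of the graphing (via the preceding lemma), the discrete Hodge decomposition $l^{p}(E)=Z_{1}^{(p)}\oplus B^{1}_{(p)}$, subadditivity under exact sequences in its symmetric and asymmetric forms, and the exhaustion by bounded-degree subgraphings $\Phi^{(n)}$ handled through Lemma \ref{L:finprescontinuity}; the only cosmetic difference is that you treat $H_{1}^{(p)}=Z_{1}^{(p)}/B_{1}^{(p)}$ as the subspace and the coboundary piece $L^{p}(E(\Phi))/Z_{1}^{(p)}(\Phi)$ as the quotient, while the paper splits off the coboundaries $B^{1}_{(p)}$ as the subspace. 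One small simplification: you only need $\dim_{\Sigma,l^{p}}(L^{p}(\R,\overline{\mu}),\R)\leq 1$, which follows for every $1\leq p<\infty$ from the estimate $\dim_{\Sigma,l^{p}}(V,\R)\leq\sum_{j}\mu(\supp a_{j})$ applied to the single generator $\chi_{\Delta}$, so invoking Theorem \ref{T:LpDim} and restricting to $1\leq p\leq 2$ is unnecessary.
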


\begin{proof} First, express $\Phi=\bigcup_{n}\Phi^{(n)}$ by the above Lemma, we find that up to sets of measure zero,
\[X=\bigcup_{n=1}^{\infty}\{x:\Phi^{(n)}_{x}\mbox{ is not amenable }\},\]
and each of the above sets is $\R$-invariant. From this, it is not hard to see that we may choose $\Phi^{(n)}$ so that for every $n,$ either $\Phi^{(n)}_{x}$ is non-amenable or zero.

 By the discrete Hodge decomposition, we have the following exact sequence
\[\begin{CD} 0@>>> B^{1}_{(p)}(\Phi^{(n)}) @>>> \frac{L^{p}(E(\Phi^{(n)}))}{B^{(p)}_{1}(\Phi^{(n)})} @>>> \frac{Z^{(p)}_{1}(\Phi)}{B^{(p)}_{1}(\Phi^{(n)})} @>>> 0,\end{CD}\]
now apply subadditivity under exact sequences, and Lemma \ref{L:finprescontinuity} to complete the proof.

\end{proof}

\begin{cor} Fix $n\in \NN,$ suppose $\R$ is the equivalence relation induced by a free action of $\FF_{n}$ on a standard probability space $(X,\mu).$ Then for any sofic approximation $\Sigma$ of $\R,$ we have that
\[\underline{c}_{1,\Sigma}^{(p)}(\R)=c_{1,\Sigma}^{(p)}(\R)=n,\]
in particular for $n\geq 1,$
\[\underline{\beta}_{1,\Sigma}^{(p)}(\Phi)\geq n-1\]
for any graphing $\Phi,$ and if $\Phi$ is a treeing of $\R,$ then
\[\underline{\beta}_{1,\Sigma}^{(p)}(\Phi)=\beta_{1,\Sigma}^{(p)}(\Phi)=n-1.\]
 Thus, if $\R$ has infinite orbits and is hyperfinite then
\[\underline{c}_{1,\Sigma}^{(p)}(\R)=c_{1,\Sigma}^{(p)}(\R)=1.\]

\end{cor}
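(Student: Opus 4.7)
The plan is to anchor everything on the right Cayley graphing of the free $\FF_{n}$-action. Let $s_{1},\dots,s_{n}$ be free generators of $\FF_{n}$, and let $\phi_{i}\in[\R]$ be the transformation $x\mapsto s_{i}\cdot x$, which is defined on all of $X$ because the action is free. Then $\Phi=\{\phi_{1},\dots,\phi_{n}\}$ is a graphing of cost exactly $n$ whose fiber $\Phi_{x}$ is, for almost every $x$, the right Cayley graph of $\FF_{n}$ on $\{s_{1},\dots,s_{n}\}$, namely the $2n$-regular tree. In particular $\Phi$ is a treeing, $B_{1}^{(p)}(\Phi)=0$, and $\R$ is finitely presented by the empty system of relators. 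Grouping edges by color and identifying $(y,s_{i}y)\mapsto y$ produces an $\R$-equivariant bounded isomorphism
\[L^{p}(E(\Phi))\cong \bigoplus_{i=1}^{n} L^{p}(\R,\overline{\mu}),\]
where each summand carries the standard left-regular action.

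Combining this identification with Theorem~\ref{T:LpDim} gives
\[c^{(p)}_{1,\Sigma}(\R)=\underline{c}^{(p)}_{1,\Sigma}(\R)=\dim_{\Sigma,l^{p}}(L^{p}(E(\Phi)),\R)=n\]
for $1\le p\le 2$. The hyperfinite consequence then follows because any ergodic hyperfinite equivalence relation with infinite orbits is orbit equivalent (by Dye's theorem) to a free $\ZZ=\FF_{1}$ action, and dimension is invariant under isomorphism; the non-ergodic case reduces by an ergodic decomposition argument applied to Proposition~\ref{P:compress}.

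For the Betti bounds, the lower inequality $\underline{\beta}^{(p)}_{1,\Sigma}(\Phi)\ge n-1$ for arbitrary $\Phi$ (with $n\ge 2$) follows by applying the Hodge-based corollary $c^{(p)}_{1,\Sigma}(\R)\le \beta^{(p)}_{1,\Sigma}(\Phi)+1$ and its $\underline{\beta}$ analogue, which are legitimate because $\FF_{n}$ is non-amenable for $n\ge 2$, so no restriction $\R_{A}$ is hyperfinite. For a treeing $\Phi$ we further have $\beta^{(p)}_{1,\Sigma}(\Phi)=\dim_{\Sigma,l^{p}}(Z_{1}^{(p)}(\Phi))$ because $B_{1}^{(p)}(\Phi)=0$; the discrete Hodge decomposition gives $L^{p}(E(\Phi))=Z_{1}^{(p)}(\Phi)\oplus B^{1}_{(p)}(\Phi)$, and $\delta$ induces an equivariant bounded isomorphism $L^{p}(V(\Phi))\cong B^{1}_{(p)}(\Phi)$ (injective since $\ker\delta$ is trivial on infinite orbits, closed range by non-amenability), so $\dim_{\Sigma,l^{p}}(B^{1}_{(p)})=1$. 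For $p=2$, Theorem~\ref{T:HSlowerbound} reduces $\dim_{\Sigma,l^{2}}$ to $\dim_{L(\R)}$, whose additivity on the Hodge exact sequence forces $\dim_{L(\R)}Z_{1}^{(2)}=n-1$ and yields the matching upper bound.

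The main obstacle is the upper bound $\beta^{(p)}_{1,\Sigma}(\Phi)\le n-1$ for $p\in[1,2)$, since the subadditivity available for $\dim_{\Sigma,l^{p}}$ controls a total space only by the sum of a subspace and its quotient and so only produces lower bounds on $\dim Z_{1}^{(p)}$. My approach would be to produce an $\R$-equivariant bounded surjection from $\bigoplus_{i=2}^{n}L^{p}(\R,\overline{\mu})\subseteq L^{p}(E(\Phi))$ onto $Z_{1}^{(p)}(\Phi)$ by composing the inclusion with the Hodge projection $P=\id-\delta\Delta^{-1}\partial$; establishing density of the image amounts to showing $B^{1}_{(p)}(\Phi)+\bigoplus_{i=2}^{n}L^{p}(\R,\overline{\mu})$ is dense in $L^{p}(E(\Phi))$. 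The first part, $B^{1}_{(p)}(\Phi)\cap \bigoplus_{i=2}^{n}L^{p}(\R,\overline{\mu})=0$, is automatic because no non-zero $\phi_{1}$-invariant function lies in $L^{p}(V(\Phi))$ for infinite orbits; the delicate half is the approximate solvability of the cohomological equation $f\circ\phi_{1}-f=g$ modulo an $l^{p}$-small error for $g\in L^{p}(V(\Phi))$, which for $p=2$ follows from the Hilbert-space dimension count but for $p\in(1,2)$ requires $L^{p}$-spectral estimates on the Laplacian of the $2n$-regular tree.
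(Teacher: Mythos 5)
Your treatment of the first statement, the lower bound on $\underline{\beta}^{(p)}_{1,\Sigma}(\Phi)$, and the hyperfinite case coincides with the paper's: the canonical Cayley graphing gives $B_{1}^{(p)}(\Phi)=0$ and $L^{p}(E(\Phi))\cong L^{p}(\R,\overline{\mu})^{\oplus n}$, Theorem \ref{T:LpDim} then yields $c^{(p)}_{1,\Sigma}(\R)=\underline{c}^{(p)}_{1,\Sigma}(\R)=n$, the Hodge-based corollary $c^{(p)}_{1,\Sigma}(\R)\leq \beta^{(p)}_{1,\Sigma}(\Phi)+1$ (and its $\underline{\beta}$ analogue) gives the lower bound, and Dye's theorem handles the hyperfinite case. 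You are also right to flag that all of this lives in the range $1\leq p\leq 2$ where Theorem \ref{T:LpDim} applies, and your diagnosis that subadditivity under exact sequences only bounds $\dim Z_{1}^{(p)}$ from below is correct.

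The genuine gap is exactly where you place it: the upper bound $\beta^{(p)}_{1,\Sigma}(\Phi)\leq n-1$ for a treeing when $p<2$. Your proposed route --- composing the inclusion of $\bigoplus_{i=2}^{n}L^{p}(\R,\overline{\mu})$ with the Hodge projection and proving density of the image --- is left unfinished; you explicitly defer the key density statement to unproved ``$L^{p}$-spectral estimates on the Laplacian of the $2n$-regular tree,'' so as written the argument does not close. The paper avoids this analytic problem entirely: it invokes Proposition 8.5 of \cite{Me}, which shows that $H_{1}^{(p)}(\Phi)$ admits a dynamically generating family of $n-1$ elements, and then applies the earlier proposition that $\dim_{\Sigma,l^{p}}(V,\R)\leq \sum_{j}\mu(\supp a_{j})$ for any dynamically generating sequence $(a_{j})$. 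That combination gives $\beta^{(p)}_{1,\Sigma}(\Phi)\leq n-1$ for all $p$ at once, with no spectral input and no need to identify $Z_{1}^{(p)}(\Phi)$ with a quotient of $L^{p}(\R,\overline{\mu})^{\oplus(n-1)}$. If you want to salvage your construction, you would either have to supply the $L^{p}$ estimate you mention or, more economically, replace the surjection argument by an explicit $(n-1)$-element generating family for $Z_{1}^{(p)}(\Phi_{x})$ (the fundamental cycles created by adjoining the remaining generators to a spanning subtree), which is precisely what the cited proposition packages.
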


\begin{proof} If $\Phi$ is the graphing provided by the canonical generating set of $\FF_{n},$ then
\[B_{1}^{(p)}(\Phi)=\{0\},\]
\[L^{p}(E(\Phi))\cong L^{p}(\R,\overline{\mu})^{\oplus n},\]
and the proof of the first statement is thus complete.

	By \cite{Me}, Proposition 8.5, we know that $H_{1}^{(p)}(\Phi)$ can be generated by $n-1$ elements, and this proves the upperbound. 

	The last statement follows from the standard fact that a hyperfinite equivalence relation with infinite orbits is induced by a free action of $\ZZ.$

\end{proof}

\begin{proposition} Let $(\R,X,\mu)$ be a discrete measure-preserving equivalence relation such that $\mathcal{O}_{x}$ is infinite for almost every $x\in X.$ Then $\underline{c}_{1,\Sigma}^{(p)}(\R)\geq 1.$\end{proposition}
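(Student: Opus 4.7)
The plan is to construct an $\R$-equivariant bounded linear map with dense image from $L^p(E(\Phi))/B_1^{(p)}(\Phi)$ into a representation whose lower $l^p$-dimension is already known to be at least one, and then invoke Property 1 of $\underline{\dim}$. Fix a finite-cost graphing $\Phi$ of $\R$; using graphing-independence of $\underline{c}^{(p)}_{1,\Sigma}$ we may arrange that $\Phi$ has uniformly bounded vertex degree. Define the boundary map $\partial$ fiberwise by $(\partial f)_x(y) = \sum_{z \sim_{\Phi_x} y} f_x(z, y)$; bounded degree makes this a bounded $\R$-equivariant operator $\partial \colon L^p(E(\Phi)) \to L^p(\R, \overline{\mu})$. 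Since $\partial$ annihilates every loop, it kills $B_1^{(p)}(\Phi)$ and descends to $\bar\partial \colon L^p(E(\Phi))/B_1^{(p)}(\Phi) \to L^p(\R, \overline{\mu})$.

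The key step is showing $\bar\partial$ has dense image for $1 < p \leq 2$. Telescoping along paths in the connected graph $\Phi_x$ shows that the fiberwise image of $\partial$ in $l^p(\mathcal{O}_x)$ contains every finitely supported zero-sum function. Since orbits are infinite, one may measurably choose $\psi_1,\ldots,\psi_n \in [[\R]]$ with $\psi_i(x)$ pairwise distinct (and distinct from $x$), and approximate $\chi_{\{x\}}$ by $\chi_{\{x\}} - \frac{1}{n}\sum_i \chi_{\{\psi_i(x)\}}$, with $L^p$-error of order $n^{1/p-1} \to 0$ when $p > 1$. Thus $\chi_\Delta$ lies in the closure of the image; as $\chi_\Delta$ dynamically generates $L^p(\R, \overline{\mu})$, the image of $\bar\partial$ is dense. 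Property 1 together with Theorem \ref{T:LpDim} then yields $\underline{c}^{(p)}_{1,\Sigma}(\R) \geq \underline{\dim}_{\Sigma, l^p}(L^p(\R, \overline{\mu}), \R) = 1$.

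For $p = 1$ the telescoping approximation fails (the error stays of order $1$), and the image of $\bar\partial$ is dense only in the closed $\R$-invariant subspace $L^1_0(\R, \overline{\mu}) := \{f : \sum_y f(x, y) = 0 \text{ for a.e.\ } x\}$. The quotient $L^1(\R, \overline\mu)/L^1_0$ is isomorphic, as an $\R$-representation, to $L^1(X, \mu)$, a measurable field of one-dimensional spaces; since orbits are infinite it has dimension zero by Corollary \ref{C:finitedim}. Subadditivity under the exact sequence $0 \to L^1_0 \to L^1 \to L^1/L^1_0 \to 0$ then gives
\[
1 = \underline{\dim}_{\Sigma, l^1}(L^1(\R, \overline\mu), \R) \leq \underline{\dim}_{\Sigma, l^1}(L^1_0, \R) + 0,
\]
so $\underline{\dim}_{\Sigma, l^1}(L^1_0, \R) \geq 1$, and Property 1 applied to $\bar\partial$ finishes this case.

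The main obstacle is the boundedness of $\partial$: a finite-cost graphing need not have bounded vertex degree, and the reduction to the bounded-degree case (either by exploiting independence of graphing in the definition of $\underline{c}^{(p)}_{1,\Sigma}$, or by an approximation argument in the spirit of Lemma \ref{L:finprescontinuity}) is the most delicate point. A secondary technical issue is that the measurable choice of the $\psi_i$ above must be made uniformly in $x$, but this is straightforward from the countable structure of $\R$ together with the infinite-orbit hypothesis.
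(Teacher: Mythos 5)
Your route is genuinely different from the paper's. The paper first applies the ergodic decomposition to split $X$ into an invariant set $A$ on which $\R_{A}$ is hyperfinite and a set $B$ on which no compression is hyperfinite; on $A$ it graphs $\R_{A}$ by a single automorphism $\alpha$ (a treeing, so $B_{1}^{(p)}=0$ and the edge space is literally $L^{p}(\R_{A},\overline{\mu})$), and on $B$ it invokes the discrete Hodge decomposition for fiberwise non-amenable graphs to get a surjection of $L^{p}(E(\Phi_{0}))/B_{1}^{(p)}(\Phi_{0})$ onto $L^{p}(\R_{B},\overline{\mu})$. Your observation that Property 1 needs only \emph{dense} image, not surjectivity, lets you bypass both the ergodic decomposition and the non-amenability/Hodge machinery when $1<p\leq 2$: the fiberwise image of $\partial$ always contains the finitely supported mean-zero functions, and on an infinite orbit these are dense in $l^{p}(\mathcal{O}_{x})$ for $p>1$. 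Your separate $p=1$ argument through $L^{1}_{0}$, Corollary \ref{C:finitedim} and Theorem \ref{T:subaddexact} is also sound, and in fact reaches a case the paper's own proof does not literally cover, since the Hodge decomposition there is only established for $1<p<\infty$ (the Markov operator has $l^{1}$-norm exactly one, so $\Delta$ need not be invertible on $l^{1}$).

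The genuine gap is the one you flag and do not close: boundedness of $\partial$. Everything in your argument — that $\partial$ is defined on all of $L^{p}(E(\Phi))$, that it is continuous, and hence that it annihilates the \emph{closure} $B_{1}^{(p)}(\Phi)$ of the span of loops — requires the vertex degrees of $\Phi_{x}$ to be essentially bounded, and finite cost only makes the degree function integrable. Your first proposed fix presupposes that every aperiodic finite-cost equivalence relation admits a finite-cost graphing of uniformly bounded degree; this is plausible (one can try to reroute edges along the line of a single aperiodic $T\in[\R]$ so that each point carries boundedly many endpoints), but it is nowhere proved in the paper and is itself a nontrivial measurable-selection argument. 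Your second proposed fix, exhausting $\Phi$ by finite subgraphings $\Phi^{(n)}$ and appealing to Lemma \ref{L:finprescontinuity}, runs into the problem that $\Phi^{(n)}_{x}$ is disconnected with possibly only finite components, so the closure of the image of $\partial^{(n)}$ is only the subspace of functions that are mean-zero on each finite component; lower-bounding $\underline{\dim}_{\Sigma,l^{p}}$ of that subspace and showing the bound tends to $1$ is a further continuity argument not supplied by the paper's lemmas. Note that the paper's decomposition is precisely what lets it dodge this issue: $\partial$ is only ever needed on graphs where boundedness is automatic — the degree-two graph of a single automorphism on the hyperfinite part, and a \emph{finite} non-amenable graphing (supplied by the Kechris--Miller lemma) on the remainder. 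If you want to keep your more elementary dense-image argument, the cleanest repair is probably to combine it with the paper's choice of graphing: run your telescoping argument on $\{\alpha\}\cup\Phi_{0}$ with $\Phi_{0}$ a finite graphing whose existence on the non-hyperfinite part is guaranteed by the lemma, and invoke graphing-independence of $\underline{c}^{(p)}_{1,\Sigma}$ at the end.
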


\begin{proof} By the ergodic decomposition, we can find $\R$-invariant measurable subsets $A,B$ of $X$ so that $\mu(A\cap B)=0,$ with $\R_{A}$ amenable, and $\R_{B}$ has no amenable compression. Let $\alpha\in [\R_{A}]$ generate $\R_{A}.$ Let $\Phi_{0}$ be any countable graphing of $\R_{B},$ and set $\Phi=\{\alpha\}\cup \Phi_{0}.$ Then as $\R$-modules:
\[\frac{L^{p}(E(\Phi))}{B_{1}^{(p)}(\Phi)}=L^{p}(\R_{A},\overline{\mu})\oplus \frac{L^{p}(E(\Phi_{0}))}{B_{1}^{(p)}(\Phi_{0})},\]
and by the Discrete Hodge decomposition we have a surjective $\R$-equivariant map 
\[\frac{L^{p}(E(\Phi_{0}))}{B_{1}^{(p)}(\Phi_{0})}\to \frac{L^{p}(E(\Phi_{0}))}{Z_{1}^{(p)}(\Phi_{0})}\cong L^{p}(\R_{B},\overline{\mu}).\]

	Thus $\frac{L^{p}(E(\Phi))}{B_{1}^{(p)}(\Phi)}$ has an $\R$-equivariant surjection onto $L^{p}(\R,\overline{\mu})$ and this completes the proof.

\end{proof}

$\mathbf{Acknowledgments}.$ The author would like to thank Dimitri Shlyakhtenko, Yehuda Shalom, and Thomas Sinclair  for suggesting to consider Dimension Theory for Equivalence Relations, and Dimitri Shlyakhtenko for his helpful advice on the problem.


\begin{thebibliography}{99}
\bibitem{BHV} B. Bekka, P. De La Harpe and A. Valette,  Kazhdan's Property (T), Cambridge University Press, Cambridge 2008.
\bibitem{BO} N. Brown, N. Ozawa,$C^{*}$-Algebras and Finite Dimensional Approximations, Graduate Studies in Mathematics, 88, American Mathematical Soceity, Providence, RI, 2008.
\bibitem{Bow} L. Bowen, Measure Conjugacy Invariants for Actions of Countable Sofic Groups, J. Amer. Math. Soc. $\mathbf{23}$ (2010) 217-245.
\bibitem{CowlH} M. Cowling, U. Haagerup Completely bounded multipliers of the Fourier algebra of a simple Lie group of real rank one, Invent. Math. $\mathbf{96}$ 507-549 (1989)
\bibitem{EL}  G. Elek, G. Lippner, Sofic Equivalence Relations, arXiv:0906.3619
\bibitem{ESZ1} G. Elek, E. Szabo, On Sofic groups, arXiv:0305352
\bibitem{ESZ2}G. Elek, E. Szabo, Sofic Representations of Amenable Groups, arXiv:1010.3424
\bibitem{Gab1} D. Gaboriau, Invariants $\ell^{2}$ de relations d'\'equivalence et de groupes, Publications Mathematiques De L Ihes - PUBL MATH , vol. 95, no. 1, pp. 93-150, 2002
\bibitem{Gab2} D. Gaboriau, Coût des relations d'équivalence et des groupes, Invent. Math., Volume 139 (2000) Number 1, 41-98.
\bibitem{Gor} A. Gournay, A Dynamical Approach to von Neumann Dimension, Discrete and Continuous Dynamical Systems, Volume 26, Number 3, March 2010.
\bibitem{Gro} M. Gromov, Topological invariants of dynamical systems and spaces of holomorphic maps. I, Math. Phys. Anal. Geom., $\mathbf{2}$ (1999),323-415.
\bibitem{Ha} A. Hatcher, Algebraic Topology, Cambrdige University Press, Cambridge 2002.
\bibitem{Me} B. Hayes, An $l^{p}$-Version of von Neumann Dimension For Banach Space Representations of Sofic Groups, arXiv:1110.5390
\bibitem{KLi} D. Kerr, H. Li, Topological Entropy and the Variational Principle for Actions of Sofic Groups, arXiv:1005.0399
\bibitem{KLi2} D. Kerr, H. Li, Soficity, Amenability, And Dynamical Entropy , arXiv:1008.1429
\bibitem{KM}A.S. Kechirs, B.D. Miller, Topics in Orbit Equivalence, Lecture Notes in Mathematics, Springer-Verlag, Berlin-Heidelberg-New York, 2000
\bibitem{L} G. Levitt, On the cost of generating an equivalence relation, Egodic Theory and Dynamical Systems, Volume 15, Issue 6, 1173-1181
\bibitem{NeSt} S. Neshveyev, E. St\o rmer, Dynamical Entropy in Operator Algebras, Spring-Verlag, Berlin-Heidelberg, 2006.
\bibitem{Oz} N. Ozawa, Weak Amenability of Hyperbolic Groups, arXiv:0704.1635
\bibitem{Pest} V. G. Pestov, Hyperlinear and Sofic Groups: A Brief Guide, Bull. Symb. Logic $\mathbf{14}$ (2008), pp. 449-480
\bibitem{Pis} G. Pisier, The Volume of Convex Bodies and Banach Space Geometry, Cambridge University Press, Cambridge, 1989.
\bibitem{Rud} W. Rudin, Fourier Analysis on Groups, Interscience Tracts in Pure and Applied Mathematics, 12, John Wiley \& Sons, New York-London, 1962.
\bibitem{Taka} M. Takesaki, The Theory of Operator Algebras I, Springer-Verlag, Berlin-Heidelberg-New York, 2002.
\bibitem{Voi} D. Voiculescu,  Dynamical Approximation Entropies and Topological Entropy in Operator Algebras, Commun. Math. Phys. 170, 249-281, 1996.
\end{thebibliography}
\end{document}